\newtheorem{theorem}{Theorem}[section]
\newtheorem{lemma}[theorem]{Lemma}
\newtheorem{proposition}[theorem]{Proposition}
\theoremstyle{remark}
\newtheorem{remark}[theorem]{Remark}
\numberwithin{equation}{section}
\theoremstyle{definition}
\newtheorem{definition}[theorem]{Definition}
\newcommand{\R}{\mathbb{R}}
\newcommand{\Ci}{\mathcal{C}}
\newcommand{\lapl}{\Delta}
\begin{document}

\title{Symmetry in the composite plate problem}

\author[F.\ Colasuonno]{Francesca Colasuonno}
\author[E.\ Vecchi]{Eugenio Vecchi}
\address[F.\ Colasuonno]{Dipartimento di Matematica \newline\indent
	Universit\`a di Bologna \newline\indent
	Piazza di Porta S. Donato 5, 40126, Bologna, Italy}
\email{francesca.colasuonno@unibo.it}

\address[E.\ Vecchi]{Dipartimento di Matematica \newline\indent
	Sapienza Universit\`a di Roma,
	P.le Aldo Moro 5, 00185, Roma, Italy}
\email{vecchi@mat.uniroma1.it}
\thanks{{\bf Acknowledgments.} The authors are indebted to Prof. Sagun Chanillo for having suggested the problem
and for his valuable advice. The authors thank also Prof. Bruno Franchi for many
fruitful discussions and his support.\\
F.C. and E.V. are supported by {\em Gruppo Nazionale per l'Analisi Ma\-te\-ma\-ti\-ca, la Probabilit\`a e le loro Applicazioni} (GNAMPA) of the {\em Istituto Nazionale di Alta Matematica} (INdAM) and by University of Bologna, funds for selected research topics. F.C. is partially supported by the INdAM-GNAMPA Project 2017 ``Regolarit\`a delle soluzioni viscose per equazioni a derivate parziali non lineari degeneri''.
E.V. received funding from the People Programme (Marie Curie Actions) of the European Union's Seventh
	Framework Programme FP7/2007-2013/ under REA grant agreement No.\ 607643 (Grant MaNET `Metric Analysis for Emergent Technologies'), and was partially supported by the INdAM-GNAMPA Project 2017 ``Problemi nonlocali e degeneri nello spazio Euclideo''.}

\subjclass[2010]{35J40
, 31B30
, 35P30
, 74K20
}

\keywords{Composite plate problem, biharmonic operator, optimization of eigenvalues, symmetry of solutions, polarization.}

\date{\today}

\begin{abstract}
In this paper we deal with the {\it composite plate problem}, namely the following optimization eigenvalue problem 
$$
\inf_{\rho \in \mathrm{P}} \inf_{u \in \mathcal{W}\setminus\{0\}} \frac{\int_{\Omega}(\Delta u)^2}{\int_{\Omega} \rho u^2},
$$
where $\mathrm{P}$ is a class of admissible densities, $\mathcal{W}= H^{2}_{0}(\Omega)$ for Dirichlet boundary conditions and
$\mathcal W= H^2(\Omega) \cap H^1_{0}(\Omega)$ for Navier boundary conditions. 
The associated Euler-Lagrange equation is a fourth-order elliptic PDE governed by the biharmonic operator $\Delta^2$.
In the spirit of \cite{CGIKO00}, we
study qualitative properties of the optimal pairs $(u,\rho)$. In particular,
we prove existence and regularity and we find the explicit expression of $\rho$.
When $\Omega$ is a ball, we can also prove uniqueness of the optimal pair,
as well as positivity of $u$ and radial symmetry of both $u$ and $\rho$.
\end{abstract}
\maketitle
\tableofcontents

\section{Introduction}

In a series of papers during the 2000's, many mathematicians 
(see e.g. \cite{CGIKO00,CGK,Sha,CK08,CKT,Chanillo13}) 
studied an eigenvalue optimization problem
that arises in Continuum Mechanics, usually referred to as {\it composite membrane
problem}. In physical terms, quoting \cite{CGIKO00}, it can be stated as follows:\smallskip

\noindent {\it Build a body of prescribed shape out of given materials (of varying densities)
in such a way that the body has a prescribed mass and so that the basic frequency
of the resulting membrane (with fixed boundary) is as small as possible.}\smallskip

This problem has a long history, without aiming at completeness, we just mention here the existence result proved
in \cite{Fried} and the qualitative results proved in \cite{CoxMc}.
We refer the interested reader to the monograph \cite{Henrot} and the
references therein for more results concerning this and related problems.

In mathematical terms, the composite membrane problem can be described in a variational way. Throughout the paper, for any measurable set $S \subset \Omega$, we denote by
$\chi_{S}$ its characteristic function and by $|S|$ its $n$-dimensional Lebesgue measure.
Let $\Omega \subset \mathbb{R}^{n}$ be a bounded
domain with Lipschitz boundary $\partial \Omega$,
$0 \leq h < H$ be two fixed constants, and $M \in [h \, |\Omega|, H \, |\Omega|]$.
Define the class of {\it admissible densities} as
\begin{equation}\label{Rho}
\mathrm{P} := \left\{ \rho : \Omega \to \mathbb{R}: \int_{\Omega}\rho(x) \, dx = M, \, h \leq \rho \leq H \, \mbox{ in }\Omega,\,\textrm{and}\,\rho\neq 0\mbox{ a.e. in }\Omega \right\}.
\end{equation}
The composite membrane problem is given by
\begin{equation*}
\Theta(h,H,M):= \inf_{\rho \in \mathrm{P}} \inf_{u \in H^{1}_{0}(\Omega)\setminus\{0\}} \dfrac{\int_{\Omega}|\nabla u|^2}{\int_{\Omega}\rho \, u^2},
\end{equation*}
\noindent and a couple $(u,\rho)$ which realizes the double infimum is called a {\it optimal pair}.
The first results proved in \cite{CGIKO00} and \cite{CGK} were however
obtained for a slightly more general eigenvalue optimization problem, which we
briefly describe: let $A\in[0,|\Omega|]$ and $\alpha >0$ be real 
numbers, and let 
$$\mathcal{S} := \left\{ S \subset \Omega : |S|=A \right\}$$
\noindent be the class of {\it admissible sets}. The minimization problem is
\begin{equation*}
\Lambda(\alpha,A):= \inf_{S \in \mathcal{S}} \inf_{u \in H^{1}_{0}(\Omega)\setminus\{0\}} \dfrac{\int_{\Omega}|\nabla u|^2 + \alpha \,\int_{\Omega}\chi_{S}u^2}{\int_{\Omega}u^2}.
\end{equation*}
\noindent In this case, we call {\it optimal pair} 
any couple $(u,S)$ which realizes the infimum. 
Let us spend a few words concerning the results proved in \cite{CGIKO00} for the last
problem. First of all, one is interested in proving existence of optimal pairs, 
and it can be done relying on a 
sort of {\it bathtub principle}, \cite{LL}. 
It is not possible however to expect uniqueness
of such solutions, unless assuming some kind of symmetry on the domain $\Omega$. 
We will come back to this aspect later on, because symmetry properties
will be at the core of our investigation along this paper. 
The second aspect concerns the regularity of the minimizers $u$
and the description of the optimal set $S$, which can be considered as
a free boundary. Concerning the regularity of the function $u$, one can
rely on classical elliptic regularity theory \cite{GT} and get the sharpest regularity.
A much more delicate issue is the study of the free boundary. 

More recently, in \cite{Chanillo13},
the author pointed out a close relation between the composite membrane problem
and a problem in conformal geometry (see Section \ref{confgeo} for more details) 
while an extension of the composite membrane problem to the case 
governed by the $p$-Laplacian operator can be found in \cite{Piel,CEP09,AC16}.

The aim of this paper is to study a fourth-order
analogue of the composite membrane problem, that can be called
{\it composite plate problem}. Similar problems have been recently investigated for instance in \cite{CEP06,ACP,Chen}, see also \cite{ColPr} for an analogous problem involving the polyharmonic operator.  
We now introduce our problem. Let $\Omega \subset \mathbb{R}^{n}$ be a bounded
domain with $C^4$-boundary $\partial \Omega$,
$0 \leq h < H$ be two fixed constants, and $M \in [h \, |\Omega|, H \, |\Omega|]$. Here we consider the dimensions $n\ge 2$, we refer to \cite{Banks, Chen} for the unidimensional case.
Define the class $\mathrm{P}$ of {\it admissible densities} $\rho$ as in \eqref{Rho} and let the functional space $\mathcal W$ be 
$$\mbox{either}\quad\mathcal{W}:= H^{2}_{0}(\Omega) \quad \textrm{or} \quad \mathcal{W}:= H^{2}(\Omega) \cap H^{1}_{0}(\Omega),$$
\noindent depending on the boundary conditions one wants to consider. 
The composite plate problem is given by
\begin{equation}\label{CP}
\tag{CP}
\Theta(h,H,M):= \inf_{\rho \in \mathrm{P}} \inf_{u \in \mathcal{W}\setminus\{0\}} \dfrac{\int_{\Omega}|\Delta u|^2}{\int_{\Omega}\rho \, u^2},
\end{equation}
\noindent and the associated Euler-Lagrange equation is the fourth-order problem
\begin{equation}\label{4EL}
\left\{ \begin{array}{rl}
           \Delta^2 u = \Theta \, \rho \, u, & \quad \textrm{in $\Omega$},\\
					          u =\Delta u = 0, & \quad \textrm{on $\partial \Omega$},
				\end{array}\right.						
\end{equation}
\noindent when $\mathcal{W}= H^{2}(\Omega) \cap H^{1}_{0}(\Omega)$,
and 
\begin{equation}\label{4ELd}
\left\{ \begin{array}{rl}
           \Delta^2 u = \Theta \, \rho \, u, & \quad \textrm{in $\Omega$},\\
					          u =\tfrac{\partial u}{\partial \nu} = 0, & \quad \textrm{on $\partial \Omega$},
				\end{array}\right.						
\end{equation}
\noindent when $\mathcal{W}= H^{2}_{0}(\Omega)$.
\begin{definition}
A couple $(u,\rho)\in\mathcal W\times\mathrm{P}$ which realizes the double infimum in \eqref{CP} is called  {\it CP-optimal pair}.
\end{definition}
\noindent As for its second-order analogue, this problem has a physical interpretation in 
Continuum Mechanics for inhomogeneous linear elastic plates (cf. Section \ref{Sec2}) and is related to the following more general variational problem. 
Let $\Omega \subset \R^n$ be as in \eqref{CP}, $\alpha >0$
and $A \in [0, |\Omega|]$ be real numbers. 
Let $\lambda_N=\lambda_{N}(\alpha,S)$
be the lowest eigenvalue of the following boundary value problem with 
Navier boundary conditions:
\begin{equation}\label{Nav}
\tag{$P_N$} 
\left\{ \begin{array}{rl}
\lapl^{2}u + \alpha \chi_{S} u = \lambda \, u, & \textrm{in } \Omega,\\
         u = \lapl u = 0, & \textrm{on } \partial \Omega,
\end{array}\right.\qquad \lambda\in\mathbb R,
\end{equation}
whose variational characterization is given by
$$
\lambda_{N}(\alpha,S) = \inf \left\{ R(u,\alpha,S) : u \in H^{2}(\Omega) \cap H^{1}_{0}(\Omega), u\not\equiv 0\right\},
$$
where 
$$R(u,\alpha,S):= \dfrac{\int_{\Omega}(\lapl u)^{2}dx + \alpha \, \int_{\Omega}\chi_{S}u^2 dx}{\int_{\Omega}u^2 dx}$$
denotes the Rayleigh quotient. \\
An analogous problem appears when considering Dirichlet boundary conditions.
Let $\lambda_D=\lambda_{D}(\alpha,S)$
be the lowest eigenvalue of the following Dirichlet boundary value problem:
\begin{equation}\label{Dir}
\tag{$P_D$} 
\left\{ \begin{array}{rl}
\lapl^{2}u + \alpha \chi_{S} u = \lambda \, u, & \textrm{in } \Omega,\\
         u = \tfrac{\partial u}{\partial \nu} = 0, & \textrm{on } \partial \Omega,
\end{array}\right.\qquad \lambda\in\mathbb R.
\end{equation}
The variational characterization of $\lambda_D$ is now given by
$$
\lambda_{D}(\alpha,S) = \inf \left\{ R(u,\alpha,S) : u \in H^{2}_{0}(\Omega), u\not\equiv 0\right\},
$$
\noindent with $R(u,\alpha,S)$ defined as above.
In both cases, we consider the following generalized problem 
\begin{equation}\label{Lambda}
\tag{G}
\Lambda_{j}(\alpha,A):= \inf_{S\in\mathcal S} \lambda_{j}(\Omega,\alpha,S)  \quad \textrm{for } j=N,D,
\end{equation}
where $\mathcal S=\{S \subset \Omega\,:\, |S|=A\}$ as above.

For notational ease, hereafter we will drop all subscripts $j$, $D$, $N$ of the eigenvalues.  

\begin{definition} A couple $(u,S)\in\mathcal W\times\mathcal S$ which realizes the double infimum in \eqref{Lambda} is called  {\it G-optimal pair}.
\end{definition}
\noindent We observe that the set $S$ is defined up to zero-measure sets.

Our first result for \eqref{Lambda} reads as follows.
\begin{theorem}\label{Theo1}
Let $\Omega \subset \R^n$ be a bounded domain with
$C^{4}$-boundary $\partial \Omega$.
For any positive $\alpha >0$ and every $A \in [0, |\Omega|]$,
there exists a G-optimal pair $(u,S)$. 
Furthermore, every G-optimal pair $(u,S)$ satisfies
\begin{itemize}
 \item[(a)] $u \in C^{3,\gamma}(\overline{\Omega}) \cap W^{4,q}(\Omega)$, for every $\gamma \in (0,1)$ and $q \geq 1$;
 \item[(b)] there exists a non-negative number $t\geq 0$ such that $S = \{ u^2 \leq t\}$.
\end{itemize}
\end{theorem}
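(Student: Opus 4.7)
The plan is to prove the three assertions in order: existence of a G-optimal pair via a direct minimization on a relaxation of \eqref{Lambda}, then the structural statement (b) from the bathtub principle, and finally the regularity (a) by an elliptic bootstrap.

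For existence, since characteristic functions are not closed under the natural weak convergences, I would first relax the admissible class to $\tilde{\mathrm P}_A:=\{\rho\in L^\infty(\Omega):0\le\rho\le 1,\ \int_\Omega\rho\,dx=A\}$ and replace $\chi_S$ by $\rho$ in the Rayleigh quotient $R$. Let $(u_n,\rho_n)$ be a minimizing sequence for the relaxed problem, normalized so that $\|u_n\|_{L^2(\Omega)}=1$. The boundedness of $R(u_n,\alpha,\rho_n)$ together with the Poincaré-type inequalities available for both choices of $\mathcal W$ yields a uniform bound on $\|u_n\|_{H^2(\Omega)}$. Passing to subsequences, $u_n\rightharpoonup u$ in $H^2$ with $u_n\to u$ strongly in $L^2$, and $\rho_n\stackrel{*}{\rightharpoonup}\rho$ in $L^\infty$, with $0\le\rho\le 1$ and $\int_\Omega\rho=A$; in particular $u\not\equiv 0$. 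Weak lower semicontinuity of $\int(\Delta\cdot)^2$, together with the product convergence $\int\rho_n u_n^2\to\int\rho u^2$ (which follows from $u_n^2\to u^2$ in $L^1$ and weak-$*$ convergence of $\rho_n$), shows that $(u,\rho)$ attains the relaxed infimum. For this fixed $u$, the bathtub principle of Lieb--Loss singles out a minimizer of $\int\rho u^2$ over $\tilde{\mathrm P}_A$ of the form $\rho^*=\chi_S$ with $S=\{u^2\le t\}$ (for $t\ge 0$ chosen so that $|S|=A$, trimming the level set $\{u^2=t\}$ if necessary). Thus $(u,S)$ is a G-optimal pair and the relaxed and original infima agree.

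Part (b) then follows by the same bathtub argument applied a posteriori to any G-optimal pair: once $u$ is fixed, an optimal $S$ must minimize $\int_\Omega\chi_S u^2$ subject to $|S|=A$, which forces $S=\{u^2\le t\}$ up to a null set. For part (a), the Euler--Lagrange equation satisfied by an optimal pair reads $\Delta^2 u=(\lambda-\alpha\chi_S)u$ in $\Omega$ with the appropriate Navier or Dirichlet boundary conditions; the right-hand side is pointwise dominated by $(|\lambda|+\alpha)|u|$. A standard Agmon--Douglis--Nirenberg bootstrap---valid up to the boundary thanks to the $C^4$-regularity of $\partial\Omega$---upgrades $u\in H^2$ first to $u\in W^{4,2}$, then successively to $u\in W^{4,q}$ for any $q\ge 1$, and finally to $u\in C^{3,\gamma}(\overline\Omega)$ by Sobolev embedding.

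The main obstacle I expect is the lack of $L^1$-compactness of the admissible characteristic functions, which is why the relaxation step is indispensable; the crucial technical point is the strong $L^2$ (hence $L^1$ for squares) convergence of $u_n$ that lets one pass to the limit in the mixed term $\int\rho_n u_n^2$. Once the relaxed minimizer is in hand, the bathtub principle delivers the characteristic-function form of $S$ at no additional cost, and the regularity bootstrap is routine given the stated boundary regularity.
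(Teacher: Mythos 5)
Your existence argument and regularity bootstrap are essentially the paper's own proof: relax $\chi_S$ to densities $\eta\in[0,1]$ with $\int_\Omega\eta=A$, extract weak limits from a normalized minimizing sequence, pass to the limit in the weak Euler--Lagrange equation, then project back to a characteristic function by a bathtub-type argument (the paper isolates this as Proposition~\ref{min-pb}); and the bootstrap via Agmon--Douglis--Nirenberg estimates up to the boundary is exactly Proposition~\ref{Reg} iterated through the Sobolev embeddings. So on those two counts you are in line with the paper.

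The gap is in part~(b). The bathtub principle, applied to a fixed $u$, only tells you that any optimal $S$ lies between the open and closed sub-level sets: $\{u^2<t\}\subset S\subset\{u^2\le t\}$ with $|S|=A$, where $t=\sup\{s>0:|\{u^2<s\}|<A\}$. It does \emph{not} force $S=\{u^2\le t\}$, because when $|\{u^2=t\}|>0$ every measurable subset of $\{u^2=t\}$ of the right measure gives the same value of $\int_\Omega\chi_S u^2$, so the inner minimizer is genuinely non-unique --- and the paper explicitly allows $|\{u^2=t\}|>0$ when $\alpha=\bar\alpha(A)$ (Proposition~\ref{levelset}). To pin down $S=\{u^2\le t\}$ the paper uses the Euler--Lagrange equation together with \cite[Lemma~7.7]{GT}: since $u$ is constant a.e.\ on $\mathcal N_t:=\{u^2=t\}$, one has $\Delta^2 u=0$ a.e.\ there, so the equation reduces to $(\Lambda-\alpha\chi_S)u=0$ a.e.\ on $\mathcal N_t$; since $u\neq 0$ there and $\Lambda,\alpha>0$, this forces $\chi_S=\Lambda/\alpha>0$ a.e.\ on $\mathcal N_t$, hence $\mathcal N_t\subset S$ up to a null set. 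The degenerate case $t=0$ needs yet another argument (analytic hypoellipticity of $\Delta^2-\Lambda$ plus Mityagin's theorem on zero sets of real analytic functions) to conclude $|\{u=0\}|=0$. Your proposal, which asserts ``the same bathtub argument\dots forces $S=\{u^2\le t\}$ up to a null set,'' skips precisely this PDE step and so does not actually establish (b).
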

We stress that, due to the presence of a characteristic function in the equation in \eqref{Lambda}, $C^{3,\gamma}(\overline{\Omega})$ is the the sharpest regularity we can obtain for $u$, see Remark~\ref{sharp-reg}. \smallskip

We say that problem \eqref{Lambda} is a generalization of \eqref{CP} because there exists a positive number $\bar{\alpha}(A)$
such that the two problems are in one-to-one correspondence for
every $\alpha \in (0, \bar{\alpha}(A)]$. 
 The explicit form of the optimal set $S$ for \eqref{Lambda} allows in turn to give a complete description of the optimal density $\rho$ of \eqref{CP}, as stated in the following theorem. 
\begin{theorem}\label{Struct} Under the structural assumptions on \eqref{Lambda} and \eqref{CP}, the following properties hold.
\begin{itemize}
  \item[(a)] Let $(u,\rho)$ be a CP-optimal pair, then $\rho$ has the following form:
$$
\rho= h \, \chi_{S} + H \, \chi_{S^{c}},
$$
\noindent for a set of the form $S = \{u^2 \leq t\}$.
	\item[(b)] The pair $(u,\rho)$ is a CP-optimal pair with
				parameters $(h,H,M)$ if and only if $(u,S)$ is a G-optimal pair 
				for \eqref{Lambda} with parameters $(\alpha,A)$ given by
				\begin{equation}\label{Alpha}
				\alpha = (H-h) \Theta,
				\end{equation}
				\begin{equation}\label{A}
				A= \tfrac{H |\Omega|-M}{H-h}.
				\end{equation}
Moreover, the two minimal eigenvalues are related by
\begin{equation}\label{Lambda_Theta}
\Lambda = H \Theta.
\end{equation}
	\item[(c)] When $h$ and $H$ vary in their ranges, 
	the corresponding $\alpha$ takes value in $(0,\bar\alpha(A)]$ 
	if $A< |\Omega|$, and in $(0,\infty)$ if $A=|\Omega|$. 
	In the first case, the value $\bar\alpha(A)$ occurs when $h=0$. 
\end{itemize}
\end{theorem}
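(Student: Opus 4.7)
\emph{Part (a) via the bathtub principle.} Since $(u,\rho)$ realizes the double infimum in \eqref{CP}, $\rho$ must minimize the CP-Rayleigh quotient with $u$ held fixed. Because $\int_\Omega(\Delta u)^2\,dx$ does not depend on $\rho$, this reduces to the linear maximization
\[
\max_{\rho\in\mathrm{P}}\int_\Omega \rho\,u^2\,dx,\qquad h\le\rho\le H \text{ a.e.},\ \int_\Omega\rho\,dx=M,
\]
which is a textbook application of the bathtub principle (\cite{LL}): any maximizer equals $H$ on $\{u^2>t\}$ and $h$ on $\{u^2<t\}$, where $t\ge 0$ is determined by the mass constraint. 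Taking $S:=\{u^2\le t\}$ gives the claimed form $\rho=h\chi_S+H\chi_{S^c}$; on the slice $\{u^2=t\}$ the maximizer is non-unique, but a representative of the required bang-bang form can always be chosen without changing the Rayleigh quotient.

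\emph{Part (b) via a single algebraic identity.} For any bang-bang density write $\rho=H-(H-h)\chi_S$, so that
\[
\int_\Omega\rho\,u^2\,dx=H\int_\Omega u^2\,dx-(H-h)\int_\Omega\chi_S u^2\,dx,
\]
and the mass condition $\int_\Omega\rho\,dx=M$ becomes $|S|=(H|\Omega|-M)/(H-h)=A$. Setting $\Theta=\int_\Omega(\Delta u)^2\,dx\big/\int_\Omega\rho u^2\,dx$ and rearranging yields
\[
\frac{\int_\Omega(\Delta u)^2\,dx+(H-h)\Theta\int_\Omega\chi_S u^2\,dx}{\int_\Omega u^2\,dx}=H\Theta,
\]
so the G-Rayleigh quotient of $(u,S)$ at $\alpha=(H-h)\Theta$ equals $H\Theta$. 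Starting from a CP-optimal pair (bang-bang by (a)) this produces a G-admissible competitor of value $H\Theta$, hence $\Lambda(\alpha,A)\le H\Theta$. Conversely, applying the identity to a G-optimal pair $(v,T)$ with $\tilde\rho:=h\chi_T+H\chi_{T^c}$ shows that $(v,\tilde\rho)$ is CP-admissible with CP-Rayleigh value at least $\Theta$; combined with the already-proven $\Lambda\le H\Theta$ and the linear dependence of the G-Rayleigh quotient on $\alpha$, this forces the reverse inequality $\Theta\le\Lambda/H$. The two bounds give \eqref{Lambda_Theta}; since every inequality in the chain must then be an equality, optimality transfers between the two problems, yielding the biconditional.

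\emph{Part (c) via scaling and the equivalence of (b).} Rescaling $\rho\mapsto c\rho$ in the CP-Rayleigh quotient gives $\Theta(ch,cH,cM)=\Theta(h,H,M)/c$, so $\alpha=(H-h)\Theta$ depends only on $r:=h/H\in[0,1)$: explicitly,
\[
\alpha(r)=(1-r)\,\Theta(r,1,|\Omega|-A+rA).
\]
One has $\alpha(0)=\Theta(0,1,|\Omega|-A)=:\bar\alpha(A)$ and, for $A<|\Omega|$, $\alpha(r)\to 0$ as $r\to 1^-$ (in that limit $\rho\equiv 1$ and $(1-r)\to 0$); continuous dependence of $\Theta$ on the parameters then shows $\alpha$ sweeps an interval containing $(0,\bar\alpha(A)]$. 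The supremum is attained at $r=0$, which I would deduce from the equivalence of (b) together with the monotonicity of $\alpha\mapsto\Lambda(\alpha,A)$ in the G-formulation (where it is immediate from the variational characterization). When $A=|\Omega|$, the mass constraint forces $\rho\equiv h$, so $\Theta=\mu_1/h$ with $\mu_1$ the principal eigenvalue of $\Delta^2$ on $\mathcal{W}$, and $\alpha=\mu_1(H/h-1)$ sweeps $(0,\infty)$ as $H/h$ varies over $(1,\infty)$.

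\emph{Main obstacle.} Parts (a) and (b) amount to one bathtub-principle application and a single algebraic rearrangement; I do not expect real difficulties there. The delicate point is the attainment statement in (c), which requires controlling $\Theta(h,H,M)$ along the one-parameter family singled out by a fixed $A$; the cleanest path is to avoid direct comparison of CP-Rayleigh quotients for different $(h,H)$ and instead pivot through the G-problem via the equivalence just established.
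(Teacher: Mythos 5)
Your parts (a) and (b) are essentially correct and follow the same route as the paper. For (a), the paper also reduces to a bathtub-type minimization, but phrases it through the substitution $\eta=(H-\rho)/(H-h)$ and its own Proposition~\ref{min-pb}; your direct bathtub argument is the same idea. For (b), your algebraic rearrangement is exactly the paper's computation (cf.\ \eqref{ThetaH}--\eqref{le0} and the proof of~(b)). One small remark: in your converse step the appeal to ``the already-proven $\Lambda\le H\Theta$ and the linear dependence on $\alpha$'' is superfluous; the inequality $\Theta\le\int(\Delta v)^2/\int\tilde\rho v^2$, combined with $\int(\Delta v)^2=\Lambda\int v^2-\alpha\int\chi_Tv^2$ and $\alpha=(H-h)\Theta$, cancels the $\alpha$-terms and yields $\Lambda\ge H\Theta$ on its own.

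Part (c) is where there is a genuine gap. Your scaling reduction to the one-variable function $\alpha(r)=(1-r)\,\Theta(r,1,|\Omega|-A+rA)$ is fine, but it does not by itself tell you that the supremum of $\alpha$ is $\bar\alpha(A)$ or that it is attained at $r=0$. You invoke ``continuous dependence of $\Theta$ on the parameters,'' which is not established anywhere and is not needed, and you say you ``would deduce'' attainment from monotonicity of $\alpha\mapsto\Lambda(\alpha,A)$, but monotonicity of $\Lambda$ alone is not the right ingredient: what is actually needed is the fixed-point characterization of $\bar\alpha(A)$ from Proposition~\ref{parameter_dep}, namely that $\alpha\mapsto\Lambda(\alpha,A)-\alpha$ is decreasing and vanishes exactly at $\bar\alpha(A)$. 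Once you have (b), the paper's argument is short and avoids your continuity/surjectivity detour entirely: from $\alpha=(H-h)\Theta$ and $\Lambda=H\Theta$ one gets $\alpha=\tfrac{H-h}{H}\Lambda(\alpha,A)$, hence $\Lambda(\alpha,A)-\alpha=\tfrac{h}{H}\Lambda(\alpha,A)\ge0$ with equality iff $h=0$; by the fixed-point property this forces $\alpha\le\bar\alpha(A)$ with equality iff $h=0$. Note also that the theorem only asserts containment of $\alpha$ in $(0,\bar\alpha(A)]$, not surjectivity, so you are trying to prove more than is claimed; and your identification $\bar\alpha(A)=\Theta(0,1,|\Omega|-A)$ itself leans on the $h=0$ endpoint, which sits at the boundary of the admissible class $\mathrm P$ (cf.\ Remark~\ref{case_h=0}) and really should be obtained from the fixed-point characterization rather than assumed. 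The $A=|\Omega|$ case in your write-up is fine and matches the paper.
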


The physical interpretation of Theorem \ref{Struct} is that the plate can be made only out of two materials, whose densities are given by the constants $h$ and $H$. Moreover, the denser material is farther from the boundary $\partial\Omega$. 
We mentioned that there are two main issues in the composite membrane
problem, namely {\it symmetry and symmetry breaking phenomena} 
and {\it regularity of the free boundary} of the generalized problem.
The same lines of investigation arise naturally in our context.
As a first step, we will study positivity and symmetry
properties of optimal pairs when $\Omega$ is a ball $B$.  
The assumption $\Omega=B$ could apparently be very restrictive, especially when
compared with the results available for the composite membrane problem. The main reason
behind this request can be roughly explained as follows. 
Symmetry properties of solutions of second-order elliptic equations 
can be proved by means of the {\it moving plane method} introduced
by Serrin in \cite{Serrin}, as a refinement of the {\it reflection principle} of Aleksandrov \cite{Aleksandrov1,Aleksandrov2}.
One of the main ingredients of this technique is the maximum principle.
The situation changes completely when dealing with fourth-order elliptic
equations. For example, symmetry and monotonicity results of Gidas-Ni-Nirenberg-type \cite{GNN} for semilinear biharmonic problems cannot hold even in the ball if the nonlinearity does not have the right sign, cf. \cite{Swe,BGW}. 
Moreover, there is a striking difference between Dirichlet and Navier boundary conditions. Indeed, for Navier 
it is possible to reduce the fourth-order equation to a second-order elliptic
system, where one recovers the main properties holding in the scalar case,
we refer to \cite{GGS} and references therein for a comprehensive survey of existing results on the topic.  In particular, the first eigenfunction of $\Delta^2$ with Navier boundary conditions is not sign-changing, while the same result does not hold in general domains under Dirichlet boundary conditions, cf. \cite{GrSw}. A second difficulty arises due to the fact that higher-order Sobolev spaces $W^{2,p}(\Omega)$ are not invariant under symmetric rearrangements, i.e., $u\in W^{2,p}(\Omega)$ does not imply that its symmetric rearrangement $u^*$ belongs to $W^{2,p}(\Omega)$, see \cite{Cianchi,GGS}.  
Nevertheless, there are instances where it is possible to bypass the structural problems appearing in the fourth-order context, e.g. \cite{Troy81,FGW}.
\smallskip

Let us now briefly describe our specific case.\\ 
For Navier boundary conditions, it is possible to rewrite
\eqref{4EL} as the second-order elliptic system
\begin{equation}\label{NSys}
\left\{ \begin{array}{rl}
          -\Delta u = v, & \quad \textrm{in $B$},\\
					-\Delta v = \Theta \, \rho \, u, & \quad \textrm{in $B$},\\
					u = v = 0, & \quad  \textrm{on $\partial B$}.
				\end{array}\right.					
\end{equation} 
Symmetry results for second-order elliptic systems on balls are available in the literature,
starting from the results by Troy \cite{Troy81} where the author considers $C^2$-solutions of the following system of PDE's 
\begin{equation}\label{Troy}
\left\{ \begin{array}{rl}
         - \Delta u_i = f_{i}(u_1, \ldots, u_n), & \quad \textrm{in $B$},\\
				  u_i >0, & \quad \textrm{in $B$},\\
				  u_i = 0,  & \quad \textrm{on $\partial B$},					
				\end{array}\right.	\quad i = 1, \ldots, m.
\end{equation} 
The nonlinearities $f_i$ are supposed to be of class $C^1$ and non-decreasing as functions of $u_j$ for every $j \neq i$. 
It is clear that once you fix an optimal configuration, \eqref{NSys}
becomes a cooperative elliptic system which
presents a non-autonomous right hand side $g(x,u)=\rho(x)u$ with no a priori
symmetry assumptions on the first entry.
Hence, it does not
satisfy the same assumptions of \eqref{Troy}, due to
the expression of $\rho$ which is the sum of two characteristic functions, and
therefore not smooth enough to allow the existence of
classical solutions. In particular, this implies that we 
deal with weak solutions in the appropriate Sobolev space. 
Despite these differences, the very specific structure of \eqref{NSys}, combined with the special form of the optimal $\rho$, allows to
adapt the proof of Troy even in our case, yielding the symmetry of the
weak solutions of \eqref{NSys}.\\
\indent The situation is in general more complicated when dealing with
Dirichlet boundary conditions, since much less symmetry results are available in the literature. 
Nevertheless, when $\Omega = B$, Ferrero, Gazzola and Weth in \cite{FGW} 
prove the radial symmetry for minimizers of subcritical Sobolev
inequalities, by means of {\it polarization}, cf. Section~\ref{Sec5}. This technique was introduced by Brock and Solynin in \cite{BrockSolynin} to avoid rearrangements methods. Indeed, since higher-order Sobolev spaces
are not closed under symmetrization, in those spaces it is not possible to have estimates of the form
$$\|\Delta u^{\ast}\|_{L^2} \leq \|\Delta u\|_{L^2},$$
useful in proving that the infimum of the Rayleigh ratio is achieved at a radial symmetric function. 
Again, the method used in \cite{FGW} exploits the continuity of the nonlinearity there involved, while in our case we cannot rely on such regularity. Here the fact that $\Theta \rho u$ can be regarded as a non-decreasing function of $u$ plays a crucial role in adapting the polarization technique in \cite{FGW} to get the desired symmetry.
\smallskip 

Throughout the paper we write {\it increasing} and {\it decreasing} meaning the {\it strict} monotonicity property.  

We state below our main result when $\Omega$ is a ball $B$. Without loss of generality we take $B:=\{x\in\mathbb R^n\,:\, |x|<1\}$. 
\begin{theorem}\label{Theo-ball}
Let $\Omega=B$, then there exists a unique CP-optimal pair $(u,\rho)$. 
Furthermore, $u$ is positive, radial, and radially decreasing. The set $S$ for which $\rho=h\chi_S + H\chi_{S^c}$ is the unique shell region $\{x\,:\, r(A)<|x|<1\}$ of measure 
$A$ (i.e., $r(A)>0$ is the unique positive constant for which  $| \{x\,:\, r(A)<|x|<1\}|=A$).  
\end{theorem}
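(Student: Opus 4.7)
The plan is to combine the structural information already provided by Theorem \ref{Theo1} and Theorem \ref{Struct} with symmetry arguments tailored to each boundary condition, and then to close the loop with a uniqueness argument based on the simplicity of the first biharmonic eigenvalue once $\rho$ is known to be radial.

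First I would fix a CP-optimal pair $(u,\rho)$ (which exists by transferring the existence in Theorem \ref{Theo1} through the correspondence \eqref{Alpha}--\eqref{A}). By Theorem \ref{Struct}(a), one has $\rho=h\chi_S+H\chi_{S^c}$ with $S=\{u^2\le t\}$, so radial symmetry of $\rho$ is automatic once radial symmetry of $u$ is established. Hence the core of the proof reduces to showing that $u$ may be chosen positive, radial and radially decreasing.

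For \emph{Navier} boundary conditions I would exploit the reformulation \eqref{NSys} as the cooperative second-order system $-\Delta u=v$, $-\Delta v=\Theta\rho u$. Positivity of $u$ and $v$ follows from the standard fact that the first Navier biharmonic eigenfunction on the ball does not change sign (apply $-\Delta$ twice to $|u|$, use the Hopf lemma and the positivity-preserving properties of the Dirichlet Laplacian on $B$). With $\rho=h\chi_S+H\chi_{S^c}$ and $S=\{u^2\le t\}=\{u\le\sqrt t\}$ (since $u\ge 0$), the map $u\mapsto\rho(u)\,u=hu+(H-h)u\chi_{\{u\le\sqrt t\}}$ is nondecreasing in $u$. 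This is exactly the structural property needed to run the moving-plane argument of Troy \cite{Troy81} on \eqref{NSys}, the only caveat being that $\rho$ is merely $L^\infty$ rather than $C^1$; I would replace the classical maximum/Hopf-type ingredients of \cite{Troy81} with their $W^{2,p}$/strong maximum principle counterparts (the $C^{3,\gamma}$-regularity of $u$ from Theorem \ref{Theo1} makes this routine), obtaining that $u$ is radial and strictly radially decreasing.

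For \emph{Dirichlet} boundary conditions the moving-plane machinery is not available, so I would instead adapt the polarization technique of Ferrero--Gazzola--Weth \cite{FGW}, as flagged in the introduction. For an arbitrary affine half-space $H$ containing the origin, with reflection $\sigma_H$ and polarization $u^H$, I would prove:
\begin{equation*}
\int_{B}|\Delta u^H|^2\,dx \;=\; \int_{B}|\Delta u|^2\,dx,
\qquad
\int_{B}\rho\,(u^H)^2\,dx \;\ge\; \int_{B}\rho\,u^2\,dx,
\end{equation*}
the first being a standard invariance of $H^2_0$-norms under polarization and the second using that $\rho$ is the rearrangement adapted to $u$ in the sense of Theorem \ref{Struct}(a) (rearrangement inequalities of Baernstein--Brock--Solynin type). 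This forces $u^H$ to be another CP-optimal pair, from which, as in \cite{FGW}, one concludes $u=u^H$ or $u=u^H\circ\sigma_H$ for every admissible $H$; the former alternative is selected via the sign of $u$, and then $u$ is radial. The main obstacle here, and where I expect the proof to require the most care, is handling the discontinuity of the nonlinearity $\Theta\rho u$: the argument of \cite{FGW} uses continuity of the nonlinearity to conclude from the polarization identity, and one must replace that step by an argument using that $u\mapsto\rho(u)\,u$ is nondecreasing once positivity of $u$ is established (and positivity of $u$ itself must be obtained in parallel, by polarizing $|u|$). To get radial monotonicity I would examine the ODE satisfied by the radial profile of $u$ and use that $\rho(r)$ is nondecreasing in $r$ together with a Hopf-type argument at $r=0$ and $r=1$.

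Finally, once $u$ is known to be positive and radial, the set $S=\{u^2\le t\}$ is automatically a shell $\{r(A)<|x|<1\}$, and its inner radius $r(A)$ is determined by $|S|=A$, giving the explicit geometric description. For uniqueness, I would argue as follows: any two CP-optimal pairs $(u_1,\rho_1)$, $(u_2,\rho_2)$ both produce the \emph{same} radial $\rho$ (because $A$ depends only on the data via \eqref{A}), so $u_1$ and $u_2$ are both first eigenfunctions of $\Delta^2$ with the same radial weight $\rho$ on $B$; the simplicity of the first eigenvalue of such a weighted biharmonic problem on the ball (immediate under Navier via the system, and known under Dirichlet on the ball from \cite{GGS}) then forces $u_1=u_2$ up to a sign and $L^2(\rho\,dx)$-normalization. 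This closes the proof.
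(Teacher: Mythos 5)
Your Navier plan is essentially the paper's: rewrite \eqref{4EL} as the cooperative system \eqref{NSys}, obtain positivity of $(u,v)$, and run Troy's moving planes with strong-maximum-principle/Hopf substitutes that tolerate the $L^\infty$ coefficient $\rho$ — that is what Proposition \ref{radialNav} does. The uniqueness observation (same $A$ forces the same radial $\rho$, hence the same shell, hence a unique first eigenfunction) and the strict-monotonicity step also match the paper.

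The Dirichlet half, however, rests on a claim that is false, and this is precisely the difficulty the paper is built to circumvent. You assert that
\begin{equation*}
\int_B |\Delta u_{\mathcal H}|^2\,dx \;=\; \int_B |\Delta u|^2\,dx
\end{equation*}
is ``a standard invariance of $H^2_0$-norms under polarization.'' It is not. Polarization preserves $L^p$ norms and first-order Dirichlet energies $\int|\nabla u|^p$, but it does \emph{not} preserve second-order energies: $u_{\mathcal H}=\max/\min\{u,u\circ\sigma_{\mathcal H}\}$ generically has a Lipschitz kink along $\{u=u\circ\sigma_{\mathcal H}\}$, so $u_{\mathcal H}$ need not lie in $H^2_0(B)$ at all, let alone have the same biharmonic energy. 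This is exactly the ``higher-order Sobolev spaces are not closed under symmetrization'' obstruction that the introduction quotes from Gazzola--Grunau--Sweers, and it is why Ferrero--Gazzola--Weth do not polarize $u$ directly. The paper instead polarizes the \emph{right-hand side}: one sets $w(x)=\Theta\int_B G(x,y)\,\rho_{u_{\mathcal H}}(y)u_{\mathcal H}(y)\,dy$ (Green's operator applied to $[\rho_u u]_{\mathcal H}=\rho_{u_{\mathcal H}}u_{\mathcal H}$, cf. Lemma \ref{Lemma-fufuh}), uses Boggio's Green-function comparison inequalities (Lemma \ref{Lemma-G}) to derive pointwise bounds $w\ge u_{\mathcal H}$ on $\mathcal H$ and $w+w\circ\sigma_{\mathcal H}\ge u_{\mathcal H}+u_{\mathcal H}\circ\sigma_{\mathcal H}$ (Lemma \ref{Lemma-w}), and then shows $\|\Delta w\|_{L^2}^2\le \Theta\int_B\rho_{u_{\mathcal H}}w^2$ (Lemma \ref{Lemma-=}), so that $w$ — which genuinely lies in $H^2_0$ — is a competitor whose Rayleigh quotient with weight $\rho_{u_{\mathcal H}}$ is at most $\Theta$. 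Equality forces $[\rho_u u]_{\mathcal H}\equiv\rho_u u$, hence $u=u_{\mathcal H}$, and radiality follows from Lemma \ref{Lemma-characterization}. Your second displayed inequality $\int_B\rho\,(u_{\mathcal H})^2\ge\int_B\rho\,u^2$ is also not the relevant comparison (the competitor's density must be $\rho_{u_{\mathcal H}}$, not the original $\rho$), and ``polarizing $|u|$'' to get positivity under Dirichlet runs into the same $H^2_0$ non-invariance. The paper proves positivity by a different route (Proposition \ref{positivity}), comparing $u$ with the solution of $\Delta^2 w=\Theta\rho|u|$ via the Boggio-type maximum principle of Lemma \ref{max_principle}.

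In short: keep the Navier argument, but for Dirichlet you must replace direct polarization of $u$ by the Green-operator comparison of Ferrero--Gazzola--Weth, and the positivity step by the maximum-principle argument; as stated, the step ``$u_{\mathcal H}$ is an admissible competitor with equal energy'' has no proof and is in fact generally false.
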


\indent We point out that if $(u,\rho)$ is a CP-optimal pair then, for every $\mu \in \mathbb{R}\setminus \{0\}$, 
$(\mu u, \rho)$ is a CP-optimal pair as well, cf. Remark \ref{case_h=0}. This means that uniqueness of CP-optimal pairs in Theorem \ref{Theo-ball}
has to be intended up to a multiplicative constant in $u$. \\
\indent A few comments on the existing literature are now in order. Theorem \ref{Theo-ball} is morally stated in \cite[Remark 11.4.2]{Henrot} as a direct consequence of the technique introduced by P\'olya-Szeg\"o in \cite[Section F.5]{PolSz} for the biharmonic Faber-Krahn problem. Nevertheless, it came out that this technique is not suitable for higher-order problems
\footnote{Quoting \cite[p. 72]{GGS}: {\it  (...) $u^*$ may not be twice weakly differentiable even if $u$ is very smooth. In their monograph, P\'olya-Szeg\"o \cite[Section F.5]{PolSz} claim that they can extend the Faber-Krahn result to the Dirichlet biharmonic operator among
domains having a first eigenfunction of fixed sign. Not only this assumption does
not cover all domains (...) but also their argument is not correct.
They deal with the Laplacian of a symmetrised smooth function and implicitly claim
that it belongs to $L^2$, which is false in general. (...)
This shows that standard symmetrisation methods are not available for higher order
problems.}}. Furthermore, a symmetry result for a problem somehow related to ours is stated in \cite{ACP}.
\smallskip

The paper is organized as follows. In Section \ref{Sec2} we describe the physical interpretation of the problem and we recall some known results that are useful in the rest of the paper. In Section \ref{Sec3} we prove Theorem \ref{Theo1} and study the dependence of $\Lambda$ on the parameters $\alpha$ and $A$. In Section \ref{Relation}, we show the relation between the two problems \eqref{Lambda} and \eqref{CP} proving Theorem \ref{Struct}, while in Section~\ref{Sec5} we prove Theorem \ref{Theo-ball}. Finally, in Section \ref{confgeo}, we present an application to a problem in conformal geometry.


\section{Preliminaries and known results}\label{Sec2}
We start this section with a detailed physical interpretation of problems \eqref{4EL} and \eqref{4ELd}. As already mentioned in the introduction, when $n=2$ these problems are related to Continuum Mechanics for inhomogeneous linear elastic plates. Plates are plane structural elements with a 
small thickness compared to the planar dimensions.
For a transversely loaded plate without axial deformations, 
the governing equation is given by the Germain-Lagrange equation
$$\Delta^2 u(x)=\frac{q(x)}{D},$$
where $u(x)$ is the transverse displacement of the plate at $x$, 
$q$ is the imposed stress, which is supposed to be 
a distributed external load that is normal to the mid-surface, 
and $D$ is the flexural rigidity, supposed to be constant. The constant $D$ depends 
on the material of the plate and its geometry as follows
$$D=\frac{Eh^3}{12(1-\nu)},$$
where $E$ is the Young modulus, $h$ the thickness of the plate, 
and $\nu$ is the Poisson coefficient. In particular, 
the units of $D$ are $[D]=\textrm{N}\cdot \textrm{m}$. 
We can always write the stress $q$ as
$$q=\rho\cdot a,$$
where $\rho$ is the surface density and $a$ an acceleration. 
We suppose that the acceleration is proportional to the displacement 
$$q=\rho\cdot a=\beta\rho u,$$
with $[\beta]=\mathrm{s}^{-2}$. Therefore, if we include all the 
constants in $\Theta$ in the equation in \eqref{4EL} (or \eqref{4ELd}), 
we get $\Theta=\beta/D$ and its units are $[\Theta]=\textrm{kg}^{-1}\textrm{m}^{-2}$, the same as the ones of an eigenvalue of $\Delta^2$ (i.e., $\mathrm{m}^{-4}$), divided by a surface density. 
Finally, Dirichlet boundary conditions are meant to describe a clamped plate, while Navier boundary conditions, when $\Omega$ is a polygonal domain in $\mathbb R^2$, describe a hinged plate, cf. \cite{Sweers}. 
\medskip 

Throughout the paper, unless differently stated, 
$\Omega \subset \R^n$, with $n\ge2$, will denote a bounded domain (i.e., open and connected) 
with $C^4$-boundary $\partial \Omega$. This regularity assumption is needed to prove the regularity result in Theorem \ref{Theo1} up to the boundary, but it can be weakened if we look for less regular solutions, cf. Remark \ref{sharp-reg}. We assume throughout the paper that $0 \in \Omega$, 
this can be done without loss of generality, 
since the problems we are considering are invariant under translation.
Furthermore, with a slight abuse of notation, we denote  
\begin{equation*}
\left\{ f < t \right\} := \left\{ x \in \Omega: \, f(x) < t \right\},
\end{equation*}
\noindent and analogously for $\{f \leq t\}$.

\medskip

As already mentioned in the introduction, we work with two 
Sobolev spaces: we use $H^{2}_{0}(\Omega)$ for the
{\it clamped plate}, and $H^{2}(\Omega)\cap H^{1}_{0}(\Omega)$ for
the {\it hinged plate}. Some of our results will be proved
in the same way either for the hinged plate or the clamped one.
In these cases, to simplify the notation, 
we will denote both spaces by $\mathcal{W}$. In both cases, we consider the space equipped with the
following norm 
$$\|u\|_{\mathcal{W}}^2:= \int_{\Omega}(\Delta u)^2 \, dx, \quad u \in \mathcal{W}$$
which is equivalent to the standard Sobolev one. The proof of the equivalence in $H^2_0(\Omega)$ relies on the Poincar\'e and Calder\'on-Zygmund inequalities, see for instance \cite[Chapter 2.7]{GGS}, 
while in $H^2(\Omega)\cap H^1_0(\Omega)$ it is a consequence of the equivalence in $H^2_0(\Omega)$ and the continuous embedding $H^2_0(\Omega)\hookrightarrow H^2(\Omega)\cap H^1_0(\Omega)$.	  
We stress that $\mathcal{W}$ endowed with $\|\cdot\|_{\mathcal{W}}$ is
a Hilbert space. There is a huge literature dealing with best constants of
the critical embeddings of these spaces, e.g. \cite{VdV}. We refer
to the monograph \cite{GGS} for a comprehensive introduction to
the subject.

We recall here two classical embedding theorems that will be useful in what follows.
\begin{theorem}
\label{EmbLq}
Let $\Omega \subset \R^n$ be an open and bounded set with
Lipschitz boundary $\partial \Omega$. Let $1\leq p < +\infty$ and let 
$m \in \mathbb{N}^{+}$. Then, the following continuous embeddings hold 
\begin{equation*}
W^{m,p}(\Omega) \hookrightarrow L^{q}(\Omega)\quad\mbox{for any }q\in
\begin{cases}
[1, \frac{np}{n-mp}],&\quad\mbox{if }n>mp,\\
[1,\infty),&\quad\mbox{if }n\le mp.
\end{cases}
\end{equation*}
\end{theorem}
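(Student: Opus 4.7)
My plan is to reduce the statement on the bounded Lipschitz domain $\Omega$ to the corresponding embedding on $\mathbb{R}^n$, then prove it first for $m=1$ and bootstrap by induction on $m$. The first step would be to invoke the existence of a bounded linear extension operator $E:W^{m,p}(\Omega)\to W^{m,p}(\mathbb{R}^n)$, which exists because $\partial\Omega$ is Lipschitz (Calder\'on/Stein extension); this immediately reduces all estimates of the form $\|u\|_{L^q(\Omega)}\le C\|u\|_{W^{m,p}(\Omega)}$ to the analogous global estimate on $Eu$, followed by restriction and the fact that $\Omega$ is bounded (so $L^{q_1}(\Omega)\hookrightarrow L^{q_2}(\Omega)$ whenever $q_2\le q_1$, which will let us drop from the critical exponent down to any $q\ge 1$).

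Next, I would treat the base case $m=1$. For $p=1$ and $n\ge 2$, the Gagliardo--Nirenberg--Sobolev inequality
\[
\|u\|_{L^{n/(n-1)}(\mathbb{R}^n)}\le C\|\nabla u\|_{L^1(\mathbb{R}^n)},\qquad u\in C^\infty_c(\mathbb{R}^n),
\]
is obtained by writing $|u(x)|\le \int_{-\infty}^{x_i}|\partial_i u|\,dt$ in each coordinate direction and combining by the iterated H\"older inequality on the product of $n$ one-dimensional integrals, each raised to the power $1/(n-1)$. For the general subcritical case $1<p<n$, one applies the $p=1$ inequality to $|u|^\gamma$ with $\gamma=p(n-1)/(n-p)$ and uses H\"older to absorb $\nabla u$ in $L^p$, arriving at $W^{1,p}(\mathbb{R}^n)\hookrightarrow L^{np/(n-p)}(\mathbb{R}^n)$. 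A density argument in $C^\infty_c$ extends the inequality to all of $W^{1,p}(\mathbb{R}^n)$.

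The higher-order case $m\ge 2$ then follows by induction: if the statement is true for $m-1$, then for $u\in W^{m,p}(\Omega)$ both $u$ and $\nabla u$ lie in $W^{m-1,p}(\Omega)$, hence in $L^{p_1}(\Omega)$ with $p_1=np/(n-(m-1)p)$ when $n>(m-1)p$. Consequently $u\in W^{1,p_1}(\Omega)$, and one more application of the $m=1$ embedding delivers $u\in L^{q}(\Omega)$ with the Sobolev exponent computed by iteration, which an elementary check shows equals $np/(n-mp)$. The borderline and supercritical cases $n\le mp$ should be handled separately: by H\"older and boundedness of $\Omega$, $W^{m,p}(\Omega)\hookrightarrow W^{m,\tilde p}(\Omega)$ for any $\tilde p<p$, and choosing $\tilde p$ so that $n>m\tilde p$ produces the subcritical embedding into $L^{n\tilde p/(n-m\tilde p)}(\Omega)$; letting $\tilde p\nearrow p$ drives this exponent to $\infty$, so every finite $q$ is attained.

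The main obstacle in practice is bookkeeping the exponents through the induction and making sure the Lipschitz extension is applied only once at the start (so the constants depend on $\Omega$ in a controlled way). Since the theorem is entirely classical, I would in an actual paper simply refer to a standard source such as the monograph of Gilbarg--Trudinger or Adams--Fournier, but the outline above is the self-contained route.
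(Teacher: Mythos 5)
The paper does not prove this theorem: it is recalled as a classical embedding result in the preliminaries and used as a black box, with no citation of a specific proof and no argument given. Your sketch is therefore a genuine addition rather than a comparison. As a proof outline it is correct: the Calder\'on--Stein extension reduces the Lipschitz domain case to $\mathbb{R}^n$; the Gagliardo--Nirenberg--Sobolev inequality handles $m=1$, $1\le p<n$; the induction on $m$ through $u,\nabla u\in W^{m-1,p}\hookrightarrow L^{p_1}$ with $p_1=np/(n-(m-1)p)$ followed by one more application of the $m=1$ case gives the critical exponent (and one checks $p_1<n$ precisely when $n>mp$, so the subcritical case applies at each stage); and the $n\le mp$ case is correctly recovered on a bounded domain by dropping to $\tilde p<p$ with $m\tilde p<n$, since the critical exponent $n\tilde p/(n-m\tilde p)$ is unbounded as $\tilde p\nearrow n/m$. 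The final inclusion of all $q$ below the critical exponent uses boundedness of $\Omega$, as you note. Your closing remark --- that in practice one would simply cite Gilbarg--Trudinger or Adams--Fournier --- is exactly what the authors do.
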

An improvement of Theorem \ref{EmbLq} when $n<mp$ is given by
the following 
\begin{theorem}[Theorem 2.6 of \cite{GGS}]\label{EmbCk}
Let $\Omega \subset \R^n$ be an open and bounded set with Lipschitz boundary
$\partial \Omega$. Let $1\leq p < +\infty$ and let 
$m \in \mathbb{N}^{+}$ and assume that there exists
$k \in \mathbb{N}$ such that $n < (m-k)p$. Then
$$W^{m,p}(\Omega) \hookrightarrow C^{k,\gamma}(\overline{\Omega}), 
\quad \textrm{for every } \gamma \in \big(0, m-k-\dfrac{n}{p}\big] \cap (0,1),$$
with compact embedding if $\gamma < m-k-\tfrac{n}{p}$.
\end{theorem}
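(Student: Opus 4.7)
The plan is to establish the embedding $W^{m,p}(\Omega)\hookrightarrow C^{k,\gamma}(\overline{\Omega})$ by reducing it to the familiar Morrey inequality via two standard reduction steps: an extension to $\mathbb{R}^n$, and an iteration of the subcritical Sobolev embedding (Theorem~\ref{EmbLq}).

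First, I would use the Lipschitz regularity of $\partial\Omega$ to invoke a Calder\'on or Stein extension operator, producing a bounded linear map $E:W^{m,p}(\Omega)\to W^{m,p}(\mathbb{R}^n)$ with $Eu\equiv u$ on $\Omega$ and $\mathrm{supp}(Eu)$ contained in a fixed compact set. This reduces every statement on $\Omega$ to the analogous statement on $\mathbb{R}^n$, provided we then restrict. Next, I would peel off the $k$ order of differentiation: for any multi-index $\beta$ with $|\beta|\leq k$, the function $D^\beta Eu$ lies in $W^{m-k,p}(\mathbb{R}^n)$, with norm controlled by $\|u\|_{W^{m,p}(\Omega)}$. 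Thus it suffices to prove the $k=0$ statement, namely that $W^{M,p}(\mathbb{R}^n)\hookrightarrow C^{0,\gamma}(\mathbb{R}^n)$ whenever $n<Mp$, with $\gamma\in (0,M-n/p]\cap(0,1)$, since applying this to each $D^\beta Eu$ yields exactly the H\"older control of the $k$-th derivatives required by the $C^{k,\gamma}$ norm.

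The heart of the proof is the $k=0$ case. Here I would iterate the subcritical embedding of Theorem~\ref{EmbLq}: as long as $p_j:=np/(n-jp)$ is defined and finite (i.e.\ as long as $jp<n$), one has $W^{M-j+1,p_{j-1}}\hookrightarrow W^{M-j,p_j}$, and a chain starting from $W^{M,p}=W^{M,p_0}$ produces $W^{M-j,p_j}$. Choose $j^\ast$ to be the smallest integer with $n<(M-j^\ast)p_{j^\ast-1}$ (this is possible precisely because $n<Mp$), so that $p_{j^\ast-1}>n$ and Morrey's inequality applies: $W^{1,p_{j^\ast-1}}\hookrightarrow C^{0,1-n/p_{j^\ast-1}}$. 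A direct computation (or an inductive tracking of the exponent at each step) shows that $1-n/p_{j^\ast-1}=M-n/p$ if $M-n/p<1$, which yields the endpoint exponent claimed. For intermediate $\gamma<M-n/p$, interpolation between the endpoint H\"older exponent and $L^\infty$ finishes the case $M-n/p\geq 1$.

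Finally, for compactness when $\gamma<m-k-n/p$: given a bounded sequence $(u_\ell)$ in $W^{m,p}(\Omega)$, the continuous embedding just proved shows that $\{D^\beta u_\ell\}_{\ell}$ is uniformly bounded and uniformly $(m-k-n/p)$-H\"older continuous for every $|\beta|\leq k$; by Arzel\`a-Ascoli a subsequence converges in $C^0(\overline{\Omega})$, and an elementary interpolation $[f]_{C^{0,\gamma}}\leq (2\|f\|_{L^\infty})^{1-\gamma/\gamma_\ast}[f]_{C^{0,\gamma_\ast}}^{\gamma/\gamma_\ast}$ with $\gamma_\ast=m-k-n/p$ upgrades convergence in $C^0$ to convergence in $C^{0,\gamma}$. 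I expect the main technical obstacle to be the sharp endpoint $\gamma=m-k-n/p$ in the non-compact case, which requires careful bookkeeping of the constants in the iterated Sobolev step and, when this exponent lands exactly at $1$, a separate argument (or the exclusion $\gamma<1$ implicit in the statement) since $C^{0,1}$ cannot in general be reached by this scheme.
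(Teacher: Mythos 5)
The paper does not prove Theorem~\ref{EmbCk}; it is quoted verbatim as Theorem 2.6 of \cite{GGS} and used as a black box, so there is no internal proof to compare against. Your proposal follows the classical route: Stein extension, differentiation reduction to $k=0$, iterated Sobolev to raise the integrability exponent, then Morrey at the last step, with Arzel\`a--Ascoli plus H\"older interpolation for compactness. That is the standard and correct strategy, and the interpolation inequality you quote for the compactness step is accurate.

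There is, however, a concrete slip in the definition of $j^\ast$. You set $j^\ast$ to be ``the smallest integer with $n<(M-j^\ast)p_{j^\ast-1}$''; substituting $p_{j^\ast-1}=np/(n-(j^\ast-1)p)$ and clearing denominators shows that this inequality is equivalent to $(M-1)p>n$, which is independent of $j^\ast$, so the ``smallest'' quantifier is vacuous and the condition also fails to hold, e.g., when $M=1$ and $n<p$ (where the theorem still applies). What you actually want is the smallest $j$ with $p_j>n$, i.e.\ $(j+1)p>n$, so $j^\ast=\lfloor n/p\rfloor$ when $n/p\notin\mathbb{N}$; then $j^\ast\le n/p<M$ guarantees $M-j^\ast\ge1$, you land in $W^{M-j^\ast,p_{j^\ast}}\hookrightarrow W^{1,p_{j^\ast}}$ with $p_{j^\ast}>n$, and Morrey applies. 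In the subcase $M-n/p<1$ one has $j^\ast=M-1$, and then your exponent bookkeeping $1-n/p_{M-1}=M-n/p$ is correct. You also need a separate word for the borderline $n/p\in\mathbb{N}$, where the chain hits $p_j=n$ and the peeled-off Sobolev step $W^{1,n}\hookrightarrow L^\infty$ fails; one fixes this by stopping one step earlier and using a $W^{2,q}$ step with $q$ slightly below $n$, or by simply noting that $W^{2,n}\hookrightarrow C^{0,\gamma}$ for all $\gamma\in(0,1)$. These are repairable bookkeeping issues rather than a wrong approach, but as written the choice of $j^\ast$ does not define a valid iteration.
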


The following maximum principle for a forth-order problem set in a ball will be useful in Section \ref{Sec5}.

\begin{lemma}[Lemma 1 of \cite{FGW}]\label{max_principle}
Let $\Omega=B:=\{x\in\mathbb R^n\,:\, |x|<1\}$ and $\mathcal C^+:=\{w\in \mathcal{W}\,:\,w\ge0\mbox{ a.e. in }B\}$. Assume that $u\in \mathcal{W}(B)$ is such that 
$$\int_B \Delta u\Delta v\ge0\quad\mbox{for every }v\in\mathcal C^+,$$
then $u\in\mathcal C^+$. Moreover, either $u\equiv 0$ or $u>0$ a.e. in $B$. 
\end{lemma}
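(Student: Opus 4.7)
The plan is to establish this maximum principle by reducing to the positivity-preserving property of the biharmonic operator in the ball, combined with a duality argument based on the symmetric bilinear form $(u,v)\mapsto \int_B\Delta u\,\Delta v\,dx$.

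As a first step, I would introduce an auxiliary Green-type operator. By Lax--Milgram applied with the equivalent norm $\|\cdot\|_{\mathcal W}$ recalled in Section~\ref{Sec2}, for every $f\in L^2(B)$ there exists a unique $Tf\in\mathcal W$ such that
$$\int_B \Delta(Tf)\,\Delta\varphi\,dx=\int_B f\,\varphi\,dx\qquad\text{for all }\varphi\in\mathcal W.$$
The crucial property is that $T$ is positivity-preserving, i.e.\ $f\geq 0$ a.e.\ in $B$ implies $Tf\in\mathcal C^+$. In the Navier case $\mathcal W=H^2(B)\cap H^1_0(B)$, this follows by setting $w:=-\Delta(Tf)\in L^2(B)$ and noting that the equation decouples into two Dirichlet problems for the Laplacian, $-\Delta w=f$ with $w=0$ on $\partial B$ (since $\Delta(Tf)=0$ on $\partial B$) and $-\Delta(Tf)=w$ with $Tf=0$ on $\partial B$, to which the weak maximum principle applies in cascade. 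In the Dirichlet case $\mathcal W=H^2_0(B)$, this is Boggio's classical positivity theorem for $\Delta^2$ in the ball, resting on the explicit strictly positive Green function $G(x,y)>0$ on $B\times B$.

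Next, I would exploit duality to obtain $u\in\mathcal C^+$. Given any non-negative $\phi\in C^\infty_c(B)$, the function $v:=T\phi$ lies in $\mathcal C^+$ by the previous step, so the hypothesis gives
$$0\leq \int_B \Delta u\,\Delta(T\phi)\,dx=\int_B u\,\phi\,dx,$$
where the equality uses symmetry of the bilinear form together with the defining identity of $T$, tested against $\varphi=u\in\mathcal W$. Letting $\phi$ range over non-negative smooth compactly supported functions yields $u\geq 0$ a.e.\ in $B$, i.e.\ $u\in\mathcal C^+$.

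For the dichotomy, assume $u\not\equiv 0$. In the Navier case, setting $w:=-\Delta u\in L^2(B)$, the hypothesis translates into $-\Delta w\geq 0$ weakly in $B$ with $w=0$ on $\partial B$, and $-\Delta u=w\geq 0$ in $B$ with $u=0$ on $\partial B$; two applications of the strong maximum principle (and Hopf's lemma) for the Dirichlet Laplacian then force $u>0$ a.e.\ in $B$. In the Dirichlet case, Boggio's formula represents $u$ as the convolution of the strictly positive kernel $G$ against the non-trivial non-negative source induced by the hypothesis, which analogously yields $u>0$ a.e.\ in $B$. I expect the most delicate part of the argument to be the Dirichlet case: unlike Navier, it cannot be reduced to an elementary second-order system and its whole positivity content rests on Boggio's explicit Green function, whose analogue may fail in general domains — a feature consistent with the restriction to $\Omega=B$ in Theorem~\ref{Theo-ball}.
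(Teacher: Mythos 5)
The paper does not reprove this result: it is cited verbatim as Lemma~1 of \cite{FGW}, where the argument runs through Moreau's dual-cone decomposition of the Hilbert space $(\mathcal{W},\int_B\Delta\cdot\,\Delta\cdot)$ with respect to the cone $\mathcal C^+$, and the inclusion of the dual cone in $-\mathcal C^+$ is obtained from Boggio's positivity. Your route — building the Green operator $T$ by Lax--Milgram, proving $T$ is positivity preserving (cascade of second-order maximum principles for Navier, Boggio for Dirichlet), and then testing the hypothesis against $v=T\phi$ to read off $\int_B u\phi\ge0$ — is a genuine repackaging of the same mechanism, and the duality test cleanly yields $u\ge0$ a.e.\ without invoking Moreau. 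Both approaches ultimately rest on the positivity of the biharmonic Green function in the ball, so what you gain is mainly a more hands-on presentation; what Moreau buys is a one-line reduction of the whole statement to the cone inclusion.

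The dichotomy is where your sketch has gaps. In the Navier case you write ``$-\Delta w\ge 0$ weakly in $B$ with $w=0$ on $\partial B$'' for $w=-\Delta u$, but for an arbitrary $u\in H^2(B)\cap H^1_0(B)$ the function $\Delta u$ is only in $L^2(B)$, hence has no trace on $\partial B$: the vanishing of $\Delta u$ on $\partial B$ is a natural boundary condition encoded only in the variational inequality, and extracting a pointwise boundary datum for $w$ requires an argument. (It is fine for $v=T\phi$, which by elliptic regularity lies in $W^{4,2}$, but not for the given $u$.) Similarly, in the Dirichlet case the statement ``Boggio's formula represents $u$ as the integral of $G>0$ against the nonnegative source'' implicitly uses that $\Delta^2 u$ is a nonnegative Radon measure and that the Green representation of $u\in H^2_0(B)$ against such a measure is valid and produces a function strictly positive on $B$ unless the measure vanishes; this is true but should be stated, since it is exactly where the strict alternative $u\equiv0$ or $u>0$ a.e.\ comes from. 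These are fixable, but as written the dichotomy step is the weakest link — which is somewhat ironic, since it is precisely the part of the lemma the paper actually uses in the positivity proof of Proposition~\ref{positivity}.
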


\begin{remark}
We stated Lemma \ref{max_principle} just in the case of the ball $B$
but, for Navier boundary conditions, it is actually possible to consider more general
domains $\Omega$ with Lipschitz boundary, and the proof is 
precisely the same as in \cite[Lemma 1]{FGW}.
\end{remark}
\medskip

We introduce now some notation, definitions and preliminary results on the polarization of a function. This technique will be useful when dealing with the symmetry properties in the problem with Dirichlet boundary conditions.

\begin{definition}
Let $\mathcal H\subset\mathbb R^n$ be a half-space 
with boundary $\partial \mathcal H$, and for every $x\in\mathbb R^n$, 
let $\bar x$ denote the reflection of $x$ with respect to $\partial \mathcal H$. 
For every function $v:\mathbb R^n\to\mathbb R$, we define its 
{\it polarization relative to $\mathcal H$} as $v_{\mathcal H}:\mathbb R^n\to\mathbb R^n$ such that 
$$
v_{\mathcal H}(x):=\left\{\begin{array}{rl}
\max\{v(x),v(\bar x)\},&\quad \mbox{if }x\in\mathcal H,\\
\min\{v(x),v(\bar x)\},&\quad\mbox{if }x\in\mathbb R^n\setminus\mathcal H.
\end{array}\right.
$$
\end{definition}

It is straightforward to check that polarization preserves 
continuity and moreover, if $v$ is a compact supported continuous 
function, then also $v_\mathcal H\in C_c(\mathbb R^n)$. 
Furthermore, every polarization preserves the $L^p$-norms ($1\le p\le +\infty$) 
and the following pointwise identity holds
\begin{equation}\label{identity}
v(x)+v(\bar x)=v_{\mathcal H}(x)+v_{\mathcal H}(\bar x)\quad\mbox{for every }x\in\mathbb R^n.
\end{equation}

\begin{proposition}[Ex. 2.4 of \cite{Burchard}, \cite{LL}]\label{equimisurabilita}
Let $\mathcal H\subset \mathbb R^n$ be a 
half-space and $f\in L^1(\mathbb R^n)$ be non-negative, then 
$$
|\{x\,:\, f(x)>s\}| = |\{x\,:\, f_\mathcal H(x)>s\}| \quad \mbox{for every }s>0.
$$
\end{proposition}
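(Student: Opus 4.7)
The core observation is that the polarization only rearranges the values of $f$ between a pair of reflected points $x$ and $\bar x$: by definition, the unordered pair $\{f_{\mathcal H}(x), f_{\mathcal H}(\bar x)\}$ coincides with $\{f(x), f(\bar x)\}$ for every $x\in\mathbb R^n$. Hence the plan is to decompose the superlevel set $\{f>s\}$ according to the two half-spaces determined by $\mathcal H$ and to compare it, piece by piece, with the corresponding decomposition of $\{f_{\mathcal H}>s\}$.

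More concretely, first I note that $|\partial\mathcal H|=0$, so we can neglect the boundary. Fix $s>0$ and set
\[
P_1:=\{x\in\mathcal H : f(x)>s\},\qquad P_2:=\{x\in\mathcal H : f(\bar x)>s\}.
\]
Since the reflection $x\mapsto\bar x$ is a measure-preserving involution mapping $\mathbb R^n\setminus\overline{\mathcal H}$ onto $\mathcal H$, one has $|P_2|=|\{y\in\mathbb R^n\setminus\overline{\mathcal H} : f(y)>s\}|$, so
\[
|\{f>s\}|=|P_1|+|P_2|.
\]

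Next I compute $\{f_{\mathcal H}>s\}$. On $\mathcal H$, by the definition of $f_{\mathcal H}$, we have $f_{\mathcal H}(x)>s$ iff $f(x)>s$ or $f(\bar x)>s$, so $\{f_{\mathcal H}>s\}\cap\mathcal H=P_1\cup P_2$. On $\mathbb R^n\setminus\overline{\mathcal H}$, $f_{\mathcal H}(x)>s$ iff both $f(x)>s$ and $f(\bar x)>s$; reflecting this set into $\mathcal H$ gives a set of measure $|P_1\cap P_2|$. Therefore
\[
|\{f_{\mathcal H}>s\}|=|P_1\cup P_2|+|P_1\cap P_2|=|P_1|+|P_2|,
\]
by standard inclusion--exclusion, and the two measures coincide.

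The only step that requires a little care is the passage between the two sides of $\mathcal H$, i.e.\ verifying that reflection is a measure-preserving Borel isomorphism, but this is immediate since the reflection is an isometry of $\mathbb R^n$. The $L^1$ hypothesis is only used to guarantee that all the quantities above are finite via Chebyshev's inequality, so the identity is meaningful.
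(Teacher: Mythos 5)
Your proof is correct. Note, however, that the paper does not supply a proof of Proposition~\ref{equimisurabilita}: it is stated as a known fact with citations to Burchard's notes and to Lieb--Loss, so there is no ``paper's proof'' to compare against. Your argument is the standard one underlying those references: you use that polarization merely permutes the values $f(x)$ and $f(\bar x)$ within each reflection pair, split both superlevel sets across $\partial\mathcal H$ (which is Lebesgue-null), pull everything back into $\mathcal H$ via the measure-preserving reflection, and finish with inclusion--exclusion $|P_1\cup P_2|+|P_1\cap P_2|=|P_1|+|P_2|$; the $L^1$ hypothesis together with $s>0$ guarantees these are finite so the identity is legitimate. One small stylistic remark: an equivalent, slightly more ``pointwise'' way to package the same idea is to note that $\chi_{\{f>s\}}(x)+\chi_{\{f>s\}}(\bar x)=\chi_{\{f_\mathcal H>s\}}(x)+\chi_{\{f_\mathcal H>s\}}(\bar x)$ for a.e.\ $x$ (since the unordered pair of values is preserved) and then integrate over $\mathcal H$; this avoids introducing $P_1,P_2$ explicitly, but the content is identical to what you wrote.
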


We will use the following characterization for radial, radially non-increasing functions. 

\begin{lemma}[Lemma 6.3 of \cite{BrockSolynin}]\label{Lemma-characterization}
A function $v\in C_c(\mathbb R^n)$ is radial and radially 
non-increasing if and only if $v=v_{\mathcal H}$ for every 
 half-space $\mathcal H\subset\mathbb R^n$ such that 
$0\in\mathrm{int}(\mathcal H)$.
\end{lemma}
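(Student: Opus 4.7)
The plan is to prove both directions by reducing everything to one elementary geometric fact about reflections: if $\mathcal H=\{y:\langle y,n\rangle\le c\}$ is a half-space with $0\in\mathrm{int}(\mathcal H)$ (so that $c>0$), then for every $x\in\mathcal H$ its reflection $\bar x=x+2(c-\langle x,n\rangle)n$ satisfies
\[
|\bar x|^2-|x|^2=4c\bigl(c-\langle x,n\rangle\bigr)\ge 0,
\]
i.e., $|\bar x|\ge|x|$. This is the only piece of linear algebra I need.

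For the forward ($\Rightarrow$) implication, assume $v$ is radial and radially non-increasing and fix an admissible half-space $\mathcal H$. By the inequality above, for $x\in\mathcal H$ we have $|x|\le|\bar x|$, hence $v(x)\ge v(\bar x)$, so $v_{\mathcal H}(x)=\max\{v(x),v(\bar x)\}=v(x)$. For $y\in\mathbb R^n\setminus\mathcal H$, the reflection $\bar y$ lies in $\mathcal H$ and by the same computation $|y|\ge|\bar y|$, so $v_\mathcal H(y)=\min\{v(y),v(\bar y)\}=v(y)$. Thus $v_\mathcal H=v$.

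For the converse, I first prove $v$ is radial. Fix $x\neq y$ with $|x|=|y|$. The perpendicular bisector of $[x,y]$ passes through $0$, so it is not immediately admissible; I shift it. Translate this bisector by $\epsilon$ in the direction from $y$ to $x$ to obtain the boundary of a half-space $\mathcal H_\epsilon$ containing both $x$ and (for $\epsilon$ small enough) $0$ in its interior; let $y_\epsilon$ denote the reflection of $x$ across $\partial\mathcal H_\epsilon$, so that $y_\epsilon\to y$ as $\epsilon\to 0^+$. The hypothesis $v=v_{\mathcal H_\epsilon}$ applied at $x\in\mathcal H_\epsilon$ gives $v(x)=\max\{v(x),v(y_\epsilon)\}\ge v(y_\epsilon)$, and passing to the limit using continuity of $v$ yields $v(x)\ge v(y)$. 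Interchanging the roles of $x$ and $y$ (i.e., shifting in the opposite direction) gives the reverse inequality, hence $v(x)=v(y)$.

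It remains to show radial monotonicity. Take $x,y$ nonzero on the same ray through $0$ with $|x|<|y|$, and let $\mathcal H$ be the half-space whose boundary is the perpendicular bisector of $[x,y]$ and which contains $x$; since $|x|<(|x|+|y|)/2<|y|$, we have $0\in\mathrm{int}(\mathcal H)$ and $y=\bar x$. The hypothesis gives $v(x)=v_\mathcal H(x)=\max\{v(x),v(y)\}\ge v(y)$, and together with the already established radiality this proves $v$ is radially non-increasing. I do not anticipate a real obstacle; the only mildly delicate step is the perturbation argument in the radial case, and it goes through for free because $v\in C_c(\mathbb R^n)$ is continuous.
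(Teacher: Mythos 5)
The paper does not supply its own proof of Lemma~\ref{Lemma-characterization}; it is quoted directly from \cite[Lemma 6.3]{BrockSolynin}, so there is no in-text argument to compare against. Your self-contained proof is organized around the right elementary observation: if $0\in\mathrm{int}(\mathcal H)=\{\langle z,n\rangle<c\}$ with $c>0$ and $x\in\mathcal H$, then $|\bar x|^2-|x|^2=4c(c-\langle x,n\rangle)\ge0$. The forward implication and the monotonicity half of the converse are correct as written.

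In the radiality step of the converse, however, there is a sign error that makes the construction fail as stated. Put $n=(x-y)/|x-y|$, so $\langle x,n\rangle>0$ and the bisector of $[x,y]$ is $\{\langle z,n\rangle=0\}$. Translating ``in the direction from $y$ to $x$'' replaces this by $\{\langle z,n\rangle=\epsilon\}$ with $\epsilon>0$; for $\epsilon\in(0,\langle x,n\rangle)$ no half-space with this boundary serves your purpose, since $\{\langle z,n\rangle\ge\epsilon\}$ contains $x$ but excludes $0$, while $\{\langle z,n\rangle\le\epsilon\}$ has $0$ in its interior but excludes $x$. The translation must go the other way, from $x$ toward $y$: take the boundary $\{\langle z,n\rangle=-\epsilon\}$ and $\mathcal H_\epsilon:=\{\langle z,n\rangle\ge-\epsilon\}$, which contains $x$ and has $0$ in its interior, and whose reflection of $x$ tends to $y$ as $\epsilon\to0^+$. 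With this sign corrected, the limit passage via continuity of $v$ and the symmetric swap of $x$ and $y$ close the argument exactly as you intended.
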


\begin{lemma}[Lemma 3 of \cite{FGW}]\label{Lemma-G}
Let $\mathcal{H}$ be a half-space such that 
$0\in\mathrm{int}(\mathcal H)$ and $G=G(x,y)$ the Green 
function of $\Delta^2$ in $B$ relative to Dirichlet boundary 
conditions. Then, for every $x,\,y\in\mathcal H$, $x\neq y$ 
the following inequalities hold
\begin{itemize}
\item[(i)] $G(x,y)\ge\max\{G(x,\bar y),G(\bar x,y)\}$;
\item[(ii)] $G(x,y)-G(\bar x,\bar y)\ge |G(x,\bar y)-G(\bar x,y)|$.
\end{itemize}
Moreover, if $x,\,y\in\mathrm{int}(B\cap\mathcal H)$, 
the inequalities in (i) and (ii) are strict.
\end{lemma}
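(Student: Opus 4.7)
The plan is to reduce both (i) and (ii) to explicit comparisons using Boggio's formula for the biharmonic Green function on the unit ball,
$$G(x,y) = k_n\,|x-y|^{4-n}\int_1^{A(x,y)}\frac{v^2-1}{v^{n-1}}\,dv,\qquad A(x,y)^2 = 1 + \frac{(1-|x|^2)(1-|y|^2)}{|x-y|^2}.$$
After the substitution $u = v\,|x-y|$ followed by $s = u^2-|x-y|^2$, this becomes
$$G(x,y) = \frac{k_n}{2}\int_0^{P(x,y)}\frac{s\,ds}{(d(x,y)+s)^{n/2}},\qquad P(x,y):=(1-|x|^2)(1-|y|^2),\quad d(x,y):=|x-y|^2,$$
from which it is immediate that $G$ is strictly increasing in $P$ and strictly decreasing in $d$.

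The second ingredient is a pair of elementary geometric observations. Writing $\mathcal H=\{z:z\cdot\nu<\kappa\}$ with $\kappa>0$ (since $0\in\mathrm{int}(\mathcal H)$), one computes $|\bar z|^2-|z|^2 = 4\kappa(\kappa-z\cdot\nu)$, which is non-negative on $\mathcal H$ and strictly positive on $\mathrm{int}(\mathcal H)$; in particular $1-|x|^2\ge 1-|\bar x|^2$ (and similarly for $y$), strictly in the interior. Since reflection is an isometry, $d(x,y)=d(\bar x,\bar y)$ and $d(x,\bar y)=d(\bar x,y)$. Moreover, a direct expansion in coordinates adapted to $\partial\mathcal H$ gives $|x-\bar y|^2-|x-y|^2 = 4(\kappa-x\cdot\nu)(\kappa-y\cdot\nu)\ge 0$, strictly on $\mathrm{int}(\mathcal H)$.

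Introducing the shorthand $a=1-|x|^2$, $a'=1-|\bar x|^2$, $b=1-|y|^2$, $b'=1-|\bar y|^2$, assertion (i) follows at once: $P(x,y)=ab\ge\max\{ab',a'b\}$ and $d(x,y)\le d(x,\bar y)=d(\bar x,y)$, so the monotonicity of $G$ yields $G(x,y)\ge\max\{G(x,\bar y),G(\bar x,y)\}$. For (ii), suppose without loss of generality $ab'\ge a'b$. Then
$$G(x,y)-G(\bar x,\bar y) = \frac{k_n}{2}\int_{a'b'}^{ab}\frac{s\,ds}{(d(x,y)+s)^{n/2}},\quad G(x,\bar y)-G(\bar x,y) = \frac{k_n}{2}\int_{a'b}^{ab'}\frac{s\,ds}{(d(x,\bar y)+s)^{n/2}}.$$
The inclusion $[a'b,ab']\subseteq[a'b',ab]$, together with the pointwise bound $(d(x,\bar y)+s)^{-n/2}\le(d(x,y)+s)^{-n/2}$, then delivers (ii); the case $ab'<a'b$ is entirely symmetric.

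The main obstacle I anticipate is the bookkeeping in (ii): one must handle both orderings of $ab'$ versus $a'b$ and confirm the correct interval inclusion in each case, so that the positivity of the integrand can be exploited. Once this is settled, strictness on $\mathrm{int}(B\cap\mathcal H)$ is automatic, since the three inequalities $a>a'$, $b>b'$, and $d(x,y)<d(x,\bar y)$ are then all strict, while $G$ depends strictly monotonically on both $P$ and $d$.
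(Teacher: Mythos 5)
The paper does not reprove this lemma but simply cites \cite[Lemma~3]{FGW}, and your argument is essentially a faithful reconstruction of the Ferrero--Gazzola--Weth proof, so the approach matches the cited source. The reduction of Boggio's formula to
$G(x,y)=\tfrac{k_n}{2}\int_0^{P(x,y)}s\,(d(x,y)+s)^{-n/2}\,ds$
is correct (one checks $A^2|x-y|^2=|x-y|^2+(1-|x|^2)(1-|y|^2)$, then the two substitutions), the identities $|\bar z|^2-|z|^2=4\kappa(\kappa-z\cdot\nu)$ and $|x-\bar y|^2-|x-y|^2=4(\kappa-x\cdot\nu)(\kappa-y\cdot\nu)$ both hold, and the interval-inclusion argument for (ii) together with the pointwise bound on the kernel is exactly right, including the strictness discussion.

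The one point you should address explicitly is that Boggio's formula is valid only for $x,y\in B$, while the lemma is applied to the zero extension of $G$ to all of $\mathbb R^n$; your formula does \emph{not} reproduce the value $0$ when a reflected point $\bar x$ or $\bar y$ falls outside $B$ (the integral $\int_0^{P}$ with $P<0$ is actually positive, not zero). These cases are easy but must be separated out: since $0\in\mathrm{int}(\mathcal H)$ implies $|\bar z|\ge|z|$ on $\mathcal H$, one has $\bar x\notin B\Rightarrow G(\bar x,\cdot)\equiv0$ and $\bar y\notin B\Rightarrow G(\cdot,\bar y)\equiv0$, so if either reflection leaves $B$ then (ii) degenerates to (i) or to the positivity $G(x,y)\ge0$, and (i) in turn follows from positivity plus your monotonicity argument restricted to the surviving pair. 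Once that case split is spelled out, the proof is complete.
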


\section{Proof of Theorem \ref{Theo1}}\label{Sec3}
The aim of this section is to prove Theorem \ref{Theo1}. 
Besides the regularity of the solutions of either \eqref{Nav} or \eqref{Dir},
Theorem \ref{Theo1} provides an explicit description of
the optimal set $S$ as a sub-level set of $u^2$. Once
established the connection between \eqref{Lambda} and \eqref{CP},
the knowledge of the optimal set $S$ will be crucial to
provide also a description of any optimal density $\rho$,
which in turn will play a crucial role in the study of
the symmetry properties of $u$.

Before proving regularity in our case, we recall the following result in a more general setting.  

\begin{proposition}[Theorem 2.20 of \cite{GGS}]\label{Reg}
Let $\Omega \subset \R^n$ be a bounded domain, with
$C^{4}$-smooth boundary $\partial\Omega$, and let $f \in L^p(\Omega)$ for $p\in (1,\infty)$.
Then 
\begin{equation}\label{general_eq}
\Delta^2 u=f\quad\mbox{in }\Omega,
\end{equation}
coupled either with Dirichlet or with Navier boundary conditions, admits a unique strong solution\footnote{For {\it strong solution} we mean a function $u$ that satisfies the equation \eqref{general_eq} almost everywhere. 
} in $W^{4,p}(\Omega)$.
\end{proposition}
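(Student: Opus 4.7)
The argument splits naturally according to the boundary conditions. Uniqueness in both cases ultimately rests on the coercivity of the bilinear form $(u,v)\mapsto\int_\Omega \Delta u\,\Delta v$ on $\mathcal{W}$ recalled in the preliminaries.

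\textbf{Navier case.} The fourth-order problem decouples into a cascade of two second-order Dirichlet problems via the substitution $v:=-\Delta u$: the Navier conditions $u=\Delta u=0$ on $\partial\Omega$ translate exactly into $u=v=0$ on $\partial\Omega$, and $\Delta^2 u=f$ becomes
\[
-\Delta v=f\text{ in }\Omega,\ v=0\text{ on }\partial\Omega,\qquad -\Delta u=v\text{ in }\Omega,\ u=0\text{ on }\partial\Omega.
\]
Since $f\in L^p(\Omega)$ and $\partial\Omega\in C^2$, classical Calder\'on--Zygmund $L^p$ theory for the Dirichlet Laplacian (see e.g.\ \cite[Chap.~9]{GT}) yields a unique $v\in W^{2,p}(\Omega)\cap W^{1,p}_0(\Omega)$. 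Since then $v\in W^{2,p}(\Omega)$ and $\partial\Omega\in C^4$, the higher-order $L^p$ regularity statement applied to $-\Delta u=v$ produces the unique $u\in W^{4,p}(\Omega)$. Uniqueness is automatic by running the cascade with $f\equiv 0$.

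\textbf{Dirichlet case.} This is more delicate because $v:=-\Delta u$ no longer carries natural boundary data, so one must attack the fourth-order equation directly. The main tool is the Agmon--Douglis--Nirenberg $L^p$ theory of elliptic boundary value problems. One checks that the biharmonic operator paired with the boundary operators $u|_{\partial\Omega}$ and $\partial_\nu u|_{\partial\Omega}$ forms a properly elliptic system satisfying the complementing condition, yielding the a priori estimate
\[
\|u\|_{W^{4,p}(\Omega)}\le C\bigl(\|\Delta^2 u\|_{L^p(\Omega)}+\|u\|_{L^p(\Omega)}\bigr)\qquad\text{for all } u\in W^{4,p}(\Omega)\cap H^2_0(\Omega).
\]
The lower-order term is absorbed via a standard compactness/Fredholm argument once one notes that the only solution of $\Delta^2 u=0$ with $u=\partial_\nu u=0$ is $u\equiv 0$ (test against $u$ to get $\int_\Omega(\Delta u)^2=0$). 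For existence, Lax--Milgram handles $p=2$ thanks to the coercivity of $(u,v)\mapsto\int_\Omega\Delta u\Delta v$ on $H^2_0(\Omega)$; for general $p\in(1,\infty)$, approximate $f\in L^p(\Omega)$ by $f_k\in C^\infty_c(\Omega)$, use the $L^2$ theory plus bootstrapping to obtain smooth solutions $u_k$, and invoke the a priori estimate to conclude that $\{u_k\}$ is Cauchy in $W^{4,p}(\Omega)$, with limit the desired strong solution.

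\textbf{Main obstacle.} The genuine difficulty lies in the Dirichlet case: verifying the complementing condition so as to activate the ADN machinery, and carrying out the approximation argument that extends existence from $L^2$ data to general $L^p$ data uniformly in $p\in(1,\infty)$. The Navier case is essentially a double application of the classical $L^p$ theory for the Laplacian and presents no significant difficulty beyond the bootstrap.
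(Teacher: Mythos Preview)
The paper does not give a proof of this proposition; it is quoted as Theorem~2.20 of \cite{GGS} and used as a black box in the bootstrap argument for Theorem~\ref{Theo1}-(a). Your sketch is a correct outline of the standard proof and aligns with the strategy in \cite{GGS}: the Navier problem is handled by iterating the second-order $L^p$ theory for the Dirichlet Laplacian, while the Dirichlet problem is treated via the Agmon--Douglis--Nirenberg machinery (the clamped-plate boundary operators satisfy the complementing condition, and uniqueness follows from testing against $u$ as you indicate). There is therefore nothing in the present paper to compare against, but your proposal stands on its own as a sound justification of the cited result.
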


Now, we are ready to prove Part $(a)$ of Theorem \ref{Theo1}.
\begin{proof}[$\bullet$ Proof of Part (a)]
Let us go back to the fourth-order PDE
$$\lapl^2 u + \alpha \chi_{S} u = \lambda u\quad \textrm{in } \Omega$$
and define the function 
\begin{equation}\label{f}
f(x) := \lambda u(x) - \alpha \chi_{S}(x) u(x).
\end{equation}
Since $u$ is a weak solution of either \eqref{Nav} or \eqref{Dir},
it holds that $u \in H^{2}(\Omega)$. This implies that
$f \in L^{2}(\Omega)$, and so $f \in L^{p}(\Omega)$ 
for every $p \in [1,2]$, being $\Omega$  bounded. 
Therefore, by
Proposition \ref{Reg}, we know that $u \in W^{4,p}(\Omega)$ for every $p \in [1,2]$.\\
Now, by Theorem \ref{EmbLq}
$$H^{4}(\Omega)\subset L^{q}(\Omega)\quad\mbox{for all }q\in\begin{cases}
[1,2^*],\quad&\mbox{if }n>8,\\
[1,2^*),&\mbox{if }n\le8,
\end{cases}
\quad
2^{\ast}:= \left\{ \begin{array}{rl} 
                        \tfrac{2n}{n-8}, & \textrm{if } n >8,\\
												+\infty, & \textrm{if } n \leq 8. 
                     \end{array}\right.
$$	
If $2^\ast=+\infty$, then $u$ and $f$ belong to $L^p(\Omega)$ 
for all $p\in[1,\infty)$. By Proposition \ref{Reg}, $u\in W^{4,p}(\Omega)$ 
for all $p\in[1,\infty)$. In particular, $u\in W^{4,p}(\Omega)$ 
for all $p>n$, hence $u\in C^{3,\gamma}(\overline \Omega)$ 
for all $\gamma\in(0,1)$, by Theorem \ref{EmbCk}.  
If $2^\ast<\infty$, we use a bootstrap argument.  			
For every $j\in \mathbb{N}$, we define
\begin{equation*}
2^{\ast}_{j}:= \left\{ \begin{array}{rl} 
                        \tfrac{2n}{n-8j}, & \textrm{if } n >8j,\\
												+\infty, & \textrm{if } n \leq 8j. 
                     \end{array}\right.
\end{equation*}										
It is straightforward to verify, by induction on $j\ge1$,
 that $2^\ast_{j+1}=(2^\ast_{j})^\ast$.
Since $u\in L^{2^\ast}(\Omega)$, also $f\in L^{2^\ast}(\Omega)$ and, 
again by Proposition \ref{Reg}, we have $u\in W^{4,2^\ast}(\Omega)$. 
Iterating the application of both Proposition \ref{Reg} and  
Theorem \ref{EmbLq} $j$-times, as long as $2^*_j<\infty$, 
we get that $u \in W^{4,2^{\ast}_{j-1}}(\Omega)$ and
$$W^{4,2^{\ast}_{j-1}}(\Omega) \subset L^{2^{\ast}_j}(\Omega).$$
Now, for every $n\in \mathbb{N}$, there exists $\bar j \in \mathbb{N}$ 
such that $n \le 8\bar j$ and so, $2^\ast_{\bar j}=+\infty$. 
After $\bar j$ iterations, we can conclude by using 
Theorem \ref{EmbCk}, as already done in the  case $2^\ast=\infty$.
\end{proof} 

\begin{remark}\label{sharp-reg}
The regularity of $u$ cannot be improved up
to $C^{4}(\Omega)$, at least in the more
relevant cases when $\emptyset\neq S \subsetneq \Omega$, due to the presence of the characteristic function.\\
We want also to stress another fact: 
from the modeling point of view, it is more reasonable 
to work with a Lipschitz boundary $\partial \Omega$.
In this case, we can obtain the same regularity
result of Theorem~\ref{Theo1}-$(a)$, but only in the {\it interior},
mainly due to the fact that the argument provided 
by \cite[Theorem~2.20]{GGS}
requires a smooth enough boundary. Therefore, if we restrict
our attention to interior regularity, we can use 
the same bootstrap argument presented in the proof
of Theorem~\ref{Theo1}-$(a)$ to prove that a weak solution $u$ of
\eqref{Nav} (or \eqref{Dir}) is such that
$$u \in W^{4,q}_{\textrm{loc}}(\Omega) \cap C^{3,\gamma}(\Omega)$$
for every $q \in [1, \infty)$ and for every $\gamma \in (0,1)$.
\end{remark}
\medskip


Let us prove now the existence of a G-optimal pair.
As for the regularity, the strategy of the proof of existence
is independent of the boundary
conditions imposed. Therefore, we will adopt
the compact notation $\mathcal{W}$ for the Sobolev
space over which we consider the infimum. \\
\indent We first prove an auxiliary result.
\begin{proposition}\label{min-pb}
Let $A \ge 0$ be a fixed non-negative constant,
$$\mathcal{A}:= \left\{ \eta:\Omega \to \mathbb{R} : 0\leq \eta \leq 1 \, \textrm{a.e. in } \Omega, \, \int_{\Omega}\eta =A \right\},$$
and $u \in \mathcal{W}$ such that $\|u\|_{L^{2}(\Omega)}=1$.
If we define the functional $I: \mathcal{A} \to \mathbb{R}$ as 
$$I(\eta) := \int_{\Omega}\eta(x) \, u^2(x) \, dx,$$
then the minimization problem 
\begin{equation}\label{infpb}
\inf_{\eta \in \mathcal{A}} I(\eta)
\end{equation}
admits a solution $\eta = \chi_{S}$,
with $S$ belonging to the following set
\begin{equation}\label{SetS}
\begin{aligned}&\mathcal S_t:=\Big\{S\subset\Omega\;:\;|S|=A,\,\{u^2 <t\} \subset S \subset \{u^2 \leq t\}\Big\},\\ &\mbox{where}\quad t:= \sup \{ s>0: |\{u^2 <s\}|<A\}.
\end{aligned}
\end{equation}
In particular, for every $\alpha>0$
\begin{equation}\label{Lambda=infeta}
\Lambda(\alpha,A)=\inf_{\eta \in \mathcal{A}} \inf_{u \in \mathcal{W}\setminus\{0\}} \dfrac{\int_{\Omega}(\lapl u)^2 + \alpha \int_{\Omega}\eta u^2}{\int_{\Omega}u^2}
\end{equation}
and the set $S$, that realizes $\Lambda$, belongs to $\mathcal S_t$.
\begin{proof}
We observe that for every set $S$ of measure $A$, 
its characteristic function $\chi_S$ belongs to $\mathcal A$. 
Hence, it is enough to prove that $I(\chi_{S}) \leq I(\eta)$ 
for every $S\subset\Omega$ satisfying \eqref{SetS} and 
for every $\eta \in \mathcal{A}$.
A simple splitting of the domain of integration yields
\begin{equation*}
\begin{aligned}
\int_{\Omega}u^{2} (\chi_{S}-\eta)dx &= \int_{\{u^2 <t\}}u^{2} (\chi_{S}-\eta)dx + \int_{\{u^2 >t\}}u^{2} (\chi_{S}-\eta)dx + \int_{\{u^2 =t\}}u^{2} (\chi_{S}-\eta) dx\\
&\leq t \, \int_{\{u^2 <t\}} (\chi_{S}-\eta)dx- t \int_{\{u^2 >t\}} \eta dx+t\int_{\{u^2 =t\}}(\chi_{S}-\eta) dx\\
&= t \left( \int_{\{u^2 <t\}} (\chi_{S}-\eta)dx+\int_{\{u^2 >t\}} (\chi_S-\eta) dx+\int_{\{u^2 =t\}}(\chi_{S}-\eta) dx\right)\\
&=t\int_\Omega(\chi_S-\eta)dx=0.
\end{aligned}
\end{equation*}
This closes the proof of the first part of the statement and easily gives
\begin{equation}\label{inf=inf}
\inf_{\eta \in \mathcal{A}} \inf_{u \in \mathcal{W}\setminus\{0\}} \dfrac{\int_{\Omega}(\lapl u)^2 + \alpha \int_{\Omega}\eta u^2}{\int_{\Omega}u^2}=\inf_{S\in\mathcal S_t} \inf_{u \in \mathcal{W}\setminus\{0\}} \dfrac{\int_{\Omega}(\lapl u)^2 + \alpha \int_{\Omega}\chi_S u^2}{\int_{\Omega}u^2}.
\end{equation}
Indeed, since 
$$\left\{ \chi_S\,:\, S\in\mathcal S_t \right\} \subseteq \mathcal{A},$$
we get  
$$\inf_{\eta \in \mathcal{A}} \inf_{u \in \mathcal{W}\setminus\{0\}} \dfrac{\int_{\Omega}(\lapl u)^2 + \alpha \int_{\Omega}\eta u^2}{\int_{\Omega}u^2} \leq \inf_{S\in\mathcal S_t} \inf_{u \in \mathcal{W}\setminus\{0\}} \dfrac{\int_{\Omega}(\lapl u)^2 + \alpha \int_{\Omega}\chi_S u^2}{\int_{\Omega}u^2}.$$
The opposite inequality follows directly from the previous computation.\\
\indent Now, on one hand, 
\begin{equation*}
\inf_{\eta \in \mathcal{A}} \inf_{u \in \mathcal{W}\setminus\{0\}} \dfrac{\int_{\Omega}(\lapl u)^2 + \alpha \int_{\Omega}\eta u^2}{\int_{\Omega}u^2} \le \Lambda(\alpha,A),
\end{equation*}
being $\chi_S\in\mathcal A$ for all $S$ of measure $A$.
\noindent On the other hand, by definition \eqref{Lambda} of $\Lambda$
\begin{equation*}
\Lambda(\alpha,A) \le \inf_{S\in\mathcal S_t} \inf_{u \in \mathcal{W}\setminus\{0\}} \dfrac{\int_{\Omega}(\lapl u)^2 + \alpha \int_{\Omega}\chi_S u^2}{\int_{\Omega}u^2}.
\end{equation*}
Together with \eqref{inf=inf}, this implies \eqref{Lambda=infeta} and concludes the proof.
\end{proof}
\end{proposition}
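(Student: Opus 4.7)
The plan is to prove Proposition \ref{min-pb} by a direct bathtub-principle-type computation. Fix $u\in\mathcal W$ with $\|u\|_{L^2(\Omega)}=1$. The intuitive idea is that in order to minimize $I(\eta)=\int_\Omega\eta u^2$ subject to $0\leq\eta\leq 1$ and $\int_\Omega\eta=A$, one should place the mass of $\eta$ on the region where $u^2$ is smallest. The definition of the threshold $t=\sup\{s>0:|\{u^2<s\}|<A\}$ ensures $|\{u^2<t\}|\leq A\leq |\{u^2\leq t\}|$, so at least one set $S$ of measure $A$ with $\{u^2<t\}\subseteq S\subseteq\{u^2\leq t\}$ exists, i.e., $\mathcal S_t\neq\emptyset$.

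The first step is to show that for every such $S\in\mathcal S_t$ and every $\eta\in\mathcal A$,
\[
\int_\Omega u^2(\chi_S-\eta)\,dx\leq 0.
\]
I would split the integral over the three regions $\{u^2<t\}$, $\{u^2=t\}$ and $\{u^2>t\}$. On $\{u^2<t\}$ we have $\chi_S\equiv 1$ (since this set is contained in $S$), hence $\chi_S-\eta\geq 0$, and we use $u^2<t$ to bound $\int_{\{u^2<t\}}u^2(\chi_S-\eta)\leq t\int_{\{u^2<t\}}(\chi_S-\eta)$. On $\{u^2>t\}$ we have $\chi_S\equiv 0$ (since $S\subseteq\{u^2\leq t\}$), so $\chi_S-\eta=-\eta\leq 0$, and we use $u^2>t$ to bound $\int_{\{u^2>t\}}u^2(\chi_S-\eta)\leq t\int_{\{u^2>t\}}(\chi_S-\eta)$. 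On $\{u^2=t\}$ the integrand is exactly $t(\chi_S-\eta)$. Adding the three contributions and recalling that $\int_\Omega\chi_S=\int_\Omega\eta=A$ makes $t\int_\Omega(\chi_S-\eta)=0$, so the total is $\leq 0$. This proves that $\chi_S$ minimizes $I$ over $\mathcal A$.

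The second step is to deduce the identity \eqref{Lambda=infeta}. The inequality
\[
\inf_{\eta\in\mathcal A}\inf_{u\in\mathcal W\setminus\{0\}}\frac{\int_\Omega(\Delta u)^2+\alpha\int_\Omega\eta u^2}{\int_\Omega u^2}\leq\Lambda(\alpha,A)
\]
is immediate, because for every $S$ with $|S|=A$ the characteristic function $\chi_S$ lies in $\mathcal A$. For the reverse inequality, fix an arbitrary $\eta\in\mathcal A$ and $u\in\mathcal W\setminus\{0\}$; after normalizing so that $\|u\|_{L^2}=1$ (both sides of the Rayleigh quotient are $2$-homogeneous in $u$, so this is harmless), Step~1 furnishes a set $S=S(u)\in\mathcal S_t$ with $|S|=A$ such that $\int_\Omega\chi_S u^2\leq\int_\Omega\eta u^2$. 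Hence the Rayleigh-type quotient with $\chi_S$ is bounded above by the one with $\eta$, and passing to infima yields $\Lambda(\alpha,A)\leq \inf_{\eta\in\mathcal A}\inf_u(\cdots)$, which together with the opposite inequality gives \eqref{Lambda=infeta}. The final assertion that the set $S$ realizing $\Lambda$ belongs to $\mathcal S_t$ follows by applying Step~1 to such an optimal $u$: any deviation from $\mathcal S_t$ would strictly decrease $\int_\Omega\chi_S u^2$, contradicting optimality.

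There is no real obstacle here: the statement is essentially the bathtub principle, and the only mild subtlety is the possibility that $|\{u^2=t\}|>0$, which is handled exactly by allowing $S$ to be any set sandwiched between $\{u^2<t\}$ and $\{u^2\leq t\}$ with measure $A$.
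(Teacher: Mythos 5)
Your proof is correct and follows essentially the same route as the paper's: the same three-region splitting $\{u^2<t\}$, $\{u^2=t\}$, $\{u^2>t\}$, the same use of the sandwich condition $\{u^2<t\}\subset S\subset\{u^2\leq t\}$ to control the sign of $\chi_S-\eta$ on each piece, and the same two-inequality argument to identify $\Lambda(\alpha,A)$ with the infimum over $\eta\in\mathcal A$. Your explicit remark that $|\{u^2<t\}|\le A\le|\{u^2\le t\}|$ (so $\mathcal S_t\ne\emptyset$) is a small but useful clarification that the paper leaves implicit.
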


We are now ready to proceed with the proof of the
existence of a G-optimal pair.
In what follows we fix $\alpha> 0$ and $A\in[0,|\Omega|]$ and we simplify the notation
by writing
$$
\Lambda:= \Lambda(\alpha,A)\quad\mbox{and}\quad \lambda(S):= \lambda(\alpha,S)\textrm{ for every } S\subset \Omega.
$$

\begin{proof}[$\bullet$ Proof of existence] 
Let $(S_k)_{k}$ be a minimizing sequence, meaning $|S_k|=A$ for every $k \in \mathbb{N}$ and
$$\lambda(S_k) \longrightarrow \Lambda \quad \textrm{as } k \to \infty.$$
For every $k \in \mathbb{N}$, we consider 
a first eigenfunction 
$u_k \in \mathcal{W}$
of $\Delta^2 + \alpha \chi_{S_k}$. Without loss of generality, we can
assume that $\|u_k\|_{L^{2}(\Omega)}=1$ for every $k\in \mathbb{N}$.\\
Now, the sequences $(\chi_{S_k})_k \subset L^2(\Omega)$ 
and $(\lambda(S_k))_k$ are bounded. Keeping in mind that 
the norm used is $\|u\|_{\mathcal{W}}^{2}= \int_{\Omega}(\Delta u)^2$,
the previous considerations imply that
 $(u_k)_k$ is a bounded sequence in $\mathcal{W}$.
Since both the spaces $L^2(\Omega)$ and $\mathcal{W}$ are Hilbert spaces,
we can extract two sub-sequences, still denoted $(\chi_{S_k})_k$ and $(u_k)_k$, and we can find
two functions $\eta \in L^{2}(\Omega)$ and $u \in \mathcal{W}$, such that
$$
\begin{aligned}
&\chi_{S_k} \rightharpoonup  \eta \quad \textrm{in } L^{2}(\Omega),\quad \textrm{as } k\to \infty,\\
&u_k \rightharpoonup  u \quad\textrm{ in } \mathcal{W}, \quad\textrm{as } k\to \infty.
\end{aligned}
$$
Hence, up to a subsequence, we have the following:
\begin{itemize}
\item[(i)] $u_k \rightarrow u$ in $L^{2}(\Omega)$, as $k \to \infty$;\\
\item[(ii)] $\int_\Omega\chi_{S_k}u_k\psi dx \to\int_\Omega \eta u\psi dx$\, for every $\psi\in C^\infty_0(\Omega)$, as $k\to \infty$;\\
\item[(iii)]$\int_{\Omega}\eta(x)dx =A$. 
\end{itemize}
Indeed, (i) follows from the compact embedding $\mathcal{W} \hookrightarrow L^{2}(\Omega)$; 
(ii) follows from (i) and H\"older's inequality in the direct computation
$$
\begin{aligned}
\left|\int_\Omega\left(\chi_{S_k}u_k-\eta u\right)\psi dx\right|&\le 
\left|\int_\Omega \chi_{S_k}(u_k-u)\psi dx\right|+\left|\int_\Omega (\chi_{S_k}-\eta)u\psi dx\right|\\
&\le \|u_k- u\|_{L^2(\Omega)}\|\psi\|_{L^2(\Omega)}+\left|\int_\Omega (\chi_{S_k}-\eta)u\psi dx\right|\to 0
\end{aligned}
$$
for every $\psi\in C_0^\infty(\Omega)$.
To prove (iii) we argue as follows: since $\chi_{S_k} \rightharpoonup \eta$ in $L^{2}(\Omega)$
and $\Omega$ is bounded, we have in particular
$$
A=\int_{\Omega}\chi_{S_k}\cdot 1\, dx \rightarrow \int_{\Omega} \eta\cdot 1\, dx,
$$
which gives (iii) by uniqueness of the limit. \\
By definition, any pair $(u_k,S_k)$ satisfies 
\begin{equation}\label{k-eq}
\Delta^2 u_k + \alpha \chi_{S_k} u_k = \lambda_{S_k} u_k
\end{equation}
and so 
\begin{equation}\label{var-for}
\int_{\Omega}\Delta u_k \Delta \psi + \alpha \int_{\Omega} \chi_{S_k}u_k \psi = \lambda_{S_k}\int_{\Omega}u_k \psi \quad\mbox{for all }\psi \in C^{\infty}_{0}(\Omega).
\end{equation}
By previous remarks, we can pass to the limit in \eqref{var-for} as $k\to\infty$,
finding
$$\int_{\Omega}\Delta u \Delta \psi + \alpha \int_{\Omega} \eta u \psi = \Lambda\int_{\Omega}u \psi.$$
Integrating by parts, we recover the variational formulation
of the eigenvalue equation associated to $\Lambda$, which implies that $u\in \mathcal{W}$ solves the equation
\begin{equation}\label{weak-eq} 
\Delta^2 u + \alpha \eta u = \Lambda u, 
\end{equation}
in the weak sense. 
Now, the sets 
$$
\begin{aligned}
P_{a} &:= \left\{ w \in L^{2}(\Omega): w(x) \leq 1 \quad \textrm{for a.e. } x \in \Omega\right\}\\
P_{b} &:= \left\{ w \in L^{2}(\Omega): w(x) \geq 0 \quad \textrm{for a.e. } x \in \Omega\right\}
\end{aligned}
$$
are strongly closed in the $L^2$-topology and convex, then weakly closed. 
Since $(\chi_{S_k})_k\subset P_a\cap P_b$ and $\chi_{S_k}\rightharpoonup \eta$ in $L^2(\Omega)$,
$$0 \leq \eta(x) \leq 1 \textrm{ for a.e. } x \in \Omega.$$
Thus, $\eta\in\mathcal A$. 
In order to end the proof, we need
to show that we can replace the function $\eta$ with a characteristic
function of a suitable set $S \subset \R^n$ of measure $A$. To this aim,
let us multiply \eqref{weak-eq} by $u$ 
and let us integrate it over $\Omega$. Since by (i) we have
$\|u\|_{L^{2}(\Omega)}=1,$ it follows that
\begin{equation*}
\int_{\Omega}(\Delta u)^2 + \alpha \, \int_{\Omega} \eta u^2 = \Lambda.
\end{equation*}
By Proposition \ref{min-pb}, we have that there exists a set $S \subset \Omega$ satisfying \eqref{SetS} such that
$$\Lambda = \int_{\Omega}(\Delta u)^2 + \alpha \, \int_{\Omega} \eta u^2 \geq \int_{\Omega}(\Delta u)^2 
+ \alpha \, \int_{\Omega} \chi_{S} u^2.$$
Hence, from the definition of $\Lambda$ as an infimum, we have
that
$$\int_{\Omega}(\Delta u)^2 + \alpha \, \int_{\Omega} \chi_{S} u^2 = \Lambda,$$
and therefore the pair $(u,S)$ is a G-optimal pair.
\end{proof}\medskip

We can now give a precise description of the optimal set $S$ in terms of a sub-level of $u^2$.
 
\begin{proof}[$\bullet$ Proof of Part (b)]
Let $(u,S)$ be a G-optimal pair. 
By the proof of the existence result, we know that $S\in\mathcal S_t$, with $t$ defined as in \eqref{SetS}. Hence, it is enough to prove that $\mathcal{N}_t :=\{u^2=t\}\subset S$. 
Now, if $t>0$, $\mathcal{N}_t=\{u=\sqrt{t}\}\cup\{u=-\sqrt{t}\}$.
By \cite[Lemma~7.7]{GT}, we have that $\lapl^2 u = 0$ a.e. in
$\mathcal{N}_t$, being $u$ constant in both $\{u=\sqrt{t}\}$ and $\{u=-\sqrt{t}\}$. 
Therefore, the Euler-Lagrange equation associated
to \eqref{Lambda} reduces to
$$(\Lambda \cdot \mathrm{Id} - \alpha \chi_S)u = 0 \quad \textrm{a.e. in } \mathcal{N}_t.$$
Since $u \neq 0$ 
in $\mathcal{N}_t$,
this implies that $\Lambda \cdot \mathrm{Id} = \alpha \chi_S$ a.e. in $\mathcal{N}_t$, which yields
in turn $\mathcal{N}_t \subset S$, being $\Lambda, \alpha >0$ . This concludes the proof in the case $t>0$. 

If $t=0$, we have to prove that $\mathcal N_0=\{u=0\}\subset S$. 
By \eqref{SetS}, we know that $S\subset \{u=0\}$, thus $\chi_S u=0$ in $\Omega$ 
and the equation reduces to $\Delta^2 u=\Lambda u$ in $\Omega$. Thus, $\Lambda=\mu(\Omega)$, 
where $\mu(\Omega)$ is the first eigenvalue of $\Delta^2$ in $\Omega$ with either Navier or Dirichlet boundary conditions, and $u$ is the corresponding first eigenfunction. 
Since $\Delta^2-\Lambda \cdot\mathrm{Id}$ has elliptic principal part 
and constant coefficients, it is analytic hypoelliptic, see \cite[Chapter 3]{Treves}. 
Hence, $u$ is a real analytic function and by \cite[Proposition 0]{Mityagin}, 
its zero set has zero measure. The proof of this last statement relies on 
the Weierstrass preparation theorem. 
In conclusion,  $0\le A \le |\{u=0\}|=0$ 
and since $S$ is defined up to zero-measure sets, we can put $S=\{u=0\}$.    
\end{proof}

As a consequence of the previous result, we know in particular that $S$ contains a neighborhood of $\partial\Omega$.

The next proposition deals with the dependence of $\Lambda$ on the parameters $\alpha$ and $A$. This is the analogue of \cite[Proposition 10]{CGIKO00}. 
For notational ease, in what follows we write $S^c$ instead of $S^c \cap \Omega$.

\begin{proposition}\label{parameter_dep} The following properties hold
\begin{itemize} 
\item for $A>0$, $\Lambda(\alpha,A)$ is increasing in $\alpha$; 
\item $\Lambda(\alpha,A)$ is Lipschitz continuous in $\alpha$ with Lipschitz constant $A|\Omega|^{-1}$;
\item for $A< |\Omega|$, there exists a unique value of $\alpha$, denoted by $\bar \alpha(A)$,
such that $\Lambda(\alpha,A)=\alpha$;
\item for $A< |\Omega|$, $\Lambda(\alpha,A)-\alpha$ is decreasing in $\alpha$;
\item $\Lambda(\alpha,A)$ is continuous and non-decreasing in $A$. 
\end{itemize}
\end{proposition}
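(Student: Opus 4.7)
The plan is to exploit the variational characterization of $\Lambda$ via suitable test pairs, leaning on the sub-level description $S\subset\{u^2\le t\}$ from Theorem~\ref{Theo1}(b) to obtain the sharp bound $\int_S u^2\le (A/|\Omega|)\int_\Omega u^2$ that drives both the Lipschitz estimate and the fixed-point statement.

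For monotonicity and Lipschitz continuity in $\alpha$: fix $\alpha_1<\alpha_2$ and let $(u_i,S_i)$ be a G-optimal pair at level $\alpha_i$ with $\|u_i\|_{L^2}=1$. Testing $(u_1,S_1)$ at level $\alpha_2$ and $(u_2,S_2)$ at level $\alpha_1$ yields
$$(\alpha_2-\alpha_1)\int_{S_2}u_2^2\le \Lambda(\alpha_2,A)-\Lambda(\alpha_1,A)\le(\alpha_2-\alpha_1)\int_{S_1}u_1^2.$$
The Lipschitz constant $A/|\Omega|$ then reduces to showing $\int_S u^2\le A/|\Omega|$ for every G-optimal pair: from Theorem~\ref{Theo1}(b) I have $S\subset\{u^2\le t\}$ and $S^c\subset\{u^2\ge t\}$, so $\int_S u^2\le tA$ and $\int_{S^c}u^2\ge t(|\Omega|-A)$, whence $\int_\Omega u^2\ge(|\Omega|/A)\int_S u^2$. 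Strict monotonicity for $A>0$ follows once I check $\int_{S_2}u_2^2>0$: $A>0$ forces $t>0$ in the sub-level description (otherwise $S\subset\{u=0\}$, a null set by the analyticity argument recalled in the proof of Theorem~\ref{Theo1}(b)), and then the continuity of $u_2$ excludes the case $\int_{S_2}u_2^2=0$. The strict decrease of $\alpha\mapsto\Lambda(\alpha,A)-\alpha$ when $A<|\Omega|$ is then immediate, since in this regime $A/|\Omega|<1$.

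For the fixed point $\bar\alpha(A)$ when $A<|\Omega|$: at $\alpha=0$, $\Lambda(0,A)=\mu(\Omega)>0$ is the first biharmonic eigenvalue; at infinity, choose an open ball $B'\subset\Omega$ small enough that some admissible $S$ with $|S|=A$ lies in $\Omega\setminus B'$, then pick $\varphi\in C_c^\infty(B')\cap\mathcal{W}\setminus\{0\}$. Since $\varphi\equiv 0$ on $S$,
$$\Lambda(\alpha,A)\le\frac{\int_\Omega(\Delta\varphi)^2+\alpha\int_S\varphi^2}{\int_\Omega\varphi^2}=\frac{\int_{B'}(\Delta\varphi)^2}{\int_{B'}\varphi^2},$$
independent of $\alpha$, so $\Lambda(\alpha,A)-\alpha\to-\infty$. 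Combined with the Lipschitz continuity and the strict decrease established above, the intermediate-value theorem yields a unique $\bar\alpha(A)>0$ with $\Lambda(\bar\alpha(A),A)=\bar\alpha(A)$.

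For the dependence on $A$: monotonicity follows since for $A_1<A_2$, every admissible $S_2$ of measure $A_2$ contains an admissible $S_1\subset S_2$ of measure $A_1$, and $\chi_{S_1}\le\chi_{S_2}$ gives $\lambda(\alpha,S_1)\le\lambda(\alpha,S_2)$. Right continuity at $A$ follows by enlarging an optimal $S$ to $S_k\supset S$ with $|S_k|=A_k\downarrow A$ and invoking absolute continuity of the integral. The main obstacle is left continuity at $A$ with $A_k\uparrow A$: I take optimizers $(u_k,S_k)$ at parameters $A_k$ normalized by $\|u_k\|_{L^2}=1$, which are bounded in $\mathcal{W}\times L^2$ thanks to monotonicity; extract $u_k\rightharpoonup u$ in $\mathcal{W}$ (hence $u_k\to u$ in $L^2$) and $\chi_{S_k}\rightharpoonup \eta\in\mathcal{A}$ in $L^2$ with $\int_\Omega\eta=A$; then invoke Proposition~\ref{min-pb} applied to the limit function $u$ to replace $\eta$ by a characteristic function $\chi_S$ with $|S|=A$ and $\int_\Omega\chi_S u^2\le\int_\Omega\eta u^2$. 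Weak lower semicontinuity of $\|\Delta\cdot\|_{L^2}^2$ finally yields $\Lambda(\alpha,A)\le\|u\|_{\mathcal{W}}^2+\alpha\int_\Omega\eta u^2\le\liminf_k\Lambda(\alpha,A_k)$, which together with monotonicity closes the continuity in $A$.
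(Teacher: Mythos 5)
Your proof is correct, but it diverges from the paper's route in three places. For the monotonicity and the Lipschitz bound in $\alpha$ you follow essentially the same test-pair computation as the paper (both rely on $\int_S u^2 \le (A/|\Omega|)\int_\Omega u^2$, which you and the paper derive in slightly different but equivalent ways from $S=\{u^2\le t\}$). For the \emph{existence and uniqueness of the fixed point} $\bar\alpha(A)$, the paper simply observes that $A/|\Omega|<1$ makes $\Lambda(\cdot,A)$ a contraction and invokes the Banach fixed-point theorem; you instead bound $\Lambda(\alpha,A)$ from above uniformly in $\alpha$ by testing with a $\varphi\in C_c^\infty(B')$ supported away from an admissible $S$ (this is valid, as $C_c^\infty(B')\subset\mathcal W$ for both boundary conditions) and then apply the intermediate value theorem; this is longer but more elementary. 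For the \emph{strict decrease} of $\Lambda(\alpha,A)-\alpha$ when $A<|\Omega|$, you read it directly off the Lipschitz bound since $A/|\Omega|-1<0$, whereas the paper proves $\int_{S_1^c}u_1^2>0$ separately via Protter's unique continuation theorem; your argument is cleaner and avoids that machinery entirely, since the quantitative inequality $\int_{S_1^c}u_1^2\ge (1-A/|\Omega|)\|u_1\|_{L^2}^2$ is already available. For the \emph{continuity in $A$}, the paper gives a single direct argument by enlarging/shrinking the optimal sets and estimating $\alpha\int_{S_2'\setminus S_1}u_1^2$, whereas you split into right- and left-continuity, handling the latter by a weak-compactness and lower-semicontinuity argument together with Proposition~\ref{min-pb}; both work, the paper's being more economical while your compactness argument avoids the (implicit) uniform-integrability step needed to make $\alpha\int_{S_2'\setminus S_1}u_1^2\to0$ when $u_1$ itself varies with $A_1$. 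One small spot where you are terse: the claim that ``continuity of $u_2$ excludes $\int_{S_2}u_2^2=0$'' should be expanded --- since $t>0$, the open nonempty set $\{u_2^2<t\}$ would lie in $\{u_2=0\}$, which, being then both open and closed in the connected $\Omega$, would force $u_2\equiv0$, contradicting $\|u_2\|_{L^2}=1$.
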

\begin{proof}
Let $A\in [0,|\Omega|]$ and take $0< \alpha_1<\alpha_2$ to fix the ideas. 
Denote $(u_1,S_1)$ and $(u_2,S_2)$ G-optimal pairs 
corresponding to $(\alpha_1,A)$ and $(\alpha_2, A)$ respectively. 
Without loss of generality, we can assume $\|u_1\|_{L^2(\Omega)}=\|u_2\|_{L^2(\Omega)}=1$. Then, 
by the optimality of $(u_1, S_1)$ for the data $(\alpha_1,A)$, we get 
$$
\begin{aligned}
\Lambda(\alpha_1,A)&=\int_\Omega(\Delta u_1)^2 +\alpha_1\int_{S_1}u_1^2\le \int_\Omega(\Delta u_2)^2+\alpha_1\int_{S_2}u_2^2\\
&\le \int_\Omega(\Delta u_2)^2 +\alpha_2\int_{S_2}u_2^2=\Lambda(\alpha_2,A)
\end{aligned}
$$
where the last inequality is strict if $A>0$, since $u_2$ cannot be zero a.e. in $S_2$. 
Indeed, if by contradiction $u_2=0$ a.e. in $S_2$, since $S_2$ 
is of the form $\{u_2^2\le t\}$ for some $t\ge0$, it results that $t=0$. 
By the discussion in the proof of Theorem \ref{Theo1}-(b), this implies 
that $A=0$, which is a contradiction. Hence, if $A>0$, 
$\Lambda(\alpha,A)$ is increasing in $\alpha$. 
On the other hand, by the optimality of $(u_2, S_2)$ for the data $(\alpha_2,A)$, we obtain
$$ 
\begin{aligned}
\Lambda(\alpha_2,A)&=\int_\Omega(\Delta u_2)^2 +\alpha_2\int_{S_2}u_2^2 \le \int_\Omega(\Delta u_1)^2 +\alpha_2\int_{S_1}u_1^2\\
&= \Lambda(\alpha_1,A)+(\alpha_2-\alpha_1)\int_{S_1}u_1^2 \le\Lambda(\alpha_1,A)+(\alpha_2-\alpha_1)\frac{A}{|\Omega|},
\end{aligned}
$$
where the last estimate comes from 
$$\frac{\int_{\{u^2\le t\}}
u^2}{|\{u^2\le t\}|}\le \frac{\int_\Omega u^2}{|\Omega|},$$
which in turn is a simple consequence of $\{u^2\le t\}\cup\{u^2>t\}=\Omega$, $\{u^2\le t\}\cap\{u^2>t\}=\emptyset$, and 
$$\fint_{\{u^2>t\}}u^2 \ge\fint_{\{u^2\le t\}}u^2.$$
Altogether, we get for $\alpha_1<\alpha_2$
$$0\le \Lambda(\alpha_2,A)-\Lambda(\alpha_1,A)\le (\alpha_2-\alpha_1)\frac{A}{|\Omega|}.$$
Analogously, if $\alpha_1>\alpha_2$ we have 
$$0\le \Lambda(\alpha_1,A)-\Lambda(\alpha_2,A)\le (\alpha_1-\alpha_2)\frac{A}{|\Omega|},$$
and so for all $\alpha_1,\,\alpha_2>0$
$$|\Lambda(\alpha_1,A)-\Lambda(\alpha_2,A)|\le \frac{A}{|\Omega|}|\alpha_1-\alpha_2|,$$
that is $\Lambda(\cdot,A)$ is Lipschitz continuous 
with Lipschitz constant $A|\Omega|^{-1}$. In particular, for $A<|\Omega|$, 
$\Lambda(\cdot,A)$ is a contraction mapping and, 
by the Banach fixed-point Theorem, it admits a unique fixed-point $\bar\alpha(A)$. 

Now, suppose that $A<|\Omega|$ and $0< \alpha_1<\alpha_2$, and estimate in the same notation as above
$$
\begin{aligned}
\Lambda(\alpha_2,A)-\alpha_2&\le \int_\Omega(\Delta u_1)^2+\alpha_2\int_{S_1}u_1^2-\alpha_2\\
&=\Lambda(\alpha_1,A)-\alpha_1-(\alpha_2-\alpha_1)\left(\int_\Omega u_1^2 -\int_{S_1}u_1^2\right).
\end{aligned}
$$
In order to prove that $\Lambda(\alpha,A)-\alpha$ 
is decreasing in $\alpha$ it remains to show that 
$$\int_\Omega u_1^2 -\int_{S_1}u_1^2>0.$$
We argue by contradiction and suppose 
$$\int_\Omega u_1^2 -\int_{S_1}u_1^2=\int_{S_1^c}u_1=0,$$
that is $u_1=0$ a.e in $S_1^c$. Since $S_1^c=\{u_1^2>t\}$ 
and $u_1$ is continuous, $S_1^c$ is open and, up to a 
translation, we can assume that $0\in S_1^c$. 
Furthermore, $|S_1^c|=|\Omega|-A>0$ 
and $|\Delta^2u_1|= |\Lambda(\alpha_1,A)\cdot\mathrm{Id}-\alpha_1\chi_{S_1}|\cdot|u_1|\le (\Lambda(\alpha_1,A)+\alpha_1)|u_1|$. Hence, by the Unique Continuation Theorem 
in \cite{Protter}, $u_1\equiv 0$ in $\Omega$. 
This is impossible being $\|u_1\|_{L^2(\Omega)}=1$ and concludes the proof of this part.

Finally, let $0\le A_1<A_2\le |\Omega|$ and $\alpha>0$. 
Denote $(u_1,S_1)$ and $(u_2,S_2)$ G-optimal pairs corresponding 
to the data $(\alpha,A_1)$ and $(\alpha,A_2)$ respectively. 
Let $S_2'\subset\Omega$ be such that $|S_2'|=A_2$ and $S_2'\supset S_1$. 
Then, by the optimality of $\Lambda(\alpha,A_2)$ we get
$$
\begin{aligned}
\Lambda(\alpha,A_1)&=\int_\Omega(\Delta u_1)^2+\alpha\int_{S_2'}u_1^2-\alpha\int_{S_2'\setminus S_1}u_1^2\\
&\ge \int_\Omega(\Delta u_2)^2+\alpha\int_{S_2}u_2^2-\alpha\int_{S_2'\setminus S_1}u_1^2=\Lambda(\alpha,A_2)-\alpha\int_{S_2'\setminus S_1}u_1^2.
\end{aligned}
$$ 
On the other hand, denoting by $S_1'$ a subset of $S_2$ 
having $|S_1'|=A_1$ and using the optimality of $\Lambda(\alpha,A_1)$, we have
$$
\Lambda(\alpha,A_1)\le\int_\Omega(\Delta u_2)^2+\alpha\int_{S_1'}u_2^2\le\int_\Omega(\Delta u_2)^2+\alpha\int_{S_2}u_2^2=\Lambda(\alpha,A_2).
$$ 
Therefore,
$$0\le \Lambda(\alpha,A_2)-\Lambda(\alpha,A_1)\le \alpha\int_{S_2'\setminus S_1}u_1^2,$$
and so $\Lambda(\alpha,\cdot)$ is non-decreasing and 
$$|\Lambda(\alpha,A_1)-\Lambda(\alpha,A_2)|\le \alpha\int_{S_2'\setminus S_1}u_1^2\;\to0 \quad\mbox{as }A_1\to A_2.$$
\end{proof}

\begin{proposition}\label{levelset}
Every set $\{u^2=s\}$, $s\ge0$, has zero measure, 
except possibly $\{u^2=t\}$ when $\alpha=\bar\alpha(A)$.
\end{proposition}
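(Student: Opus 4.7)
The plan is to assume $|\{u^2=s\}|>0$ for some $s\ge 0$ and to deduce that necessarily $s=t$ and $\alpha=\bar\alpha(A)$. The main ingredients will be: \cite[Lemma~7.7]{GT}, which kills all weak derivatives of $u$ up to order four on any level set of positive measure; the analytic hypoellipticity of constant-coefficient elliptic operators (see \cite[Chapter~3]{Treves}), giving real analyticity of $u$ on each of the two open sets $\{u^2<t\}$ and $\{u^2>t\}$, where $\chi_S$ is constant; and \cite[Proposition~0]{Mityagin}, which says that the zero set of a non-zero real analytic function has zero Lebesgue measure. The $C^{3,\gamma}(\overline{\Omega})$ regularity of $u$ from Theorem~\ref{Theo1}(a) will also be used, to have continuous boundary values.

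Since $\{u^2=s\}=\{u=\sqrt s\}\cup\{u=-\sqrt s\}$, it suffices to treat a single level set $\{u=c\}$, with $c\ge 0$ and $c^2=s$, of positive measure. Applying \cite[Lemma~7.7]{GT} iteratively to $u$, $\nabla u$, $\Delta u$ and $\nabla\Delta u$ yields $\Delta^2 u=0$ a.e.\ on $\{u=c\}$, and substitution in the Euler--Lagrange equation produces the pointwise identity
\[
(\Lambda-\alpha\chi_S)\,c=0\quad\text{a.e.\ on }\{u=c\}.
\]
Two cases follow at once. If $s>t$, then $\{u=c\}\subset S^c$, so $\chi_S=0$ and $\Lambda c=0$: impossible since $\Lambda>0$ and $c>0$. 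If $s=t>0$, then $\{u=c\}\subset S$, $\chi_S=1$, and $\Lambda=\alpha$; the defining property of $\bar\alpha(A)$ in Proposition~\ref{parameter_dep} then gives $\alpha=\bar\alpha(A)$, which is exactly the exception allowed by the statement. The borderline $s=t=0$ is already treated in the proof of Theorem~\ref{Theo1}(b): the corresponding $u$ is a first eigenfunction of $\Delta^2$ on $\Omega$, hence real analytic, and so $|\{u=0\}|=0$.

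The delicate range, and the main obstacle, is $0\le s<t$: here $\{u=c\}\subset\{u^2<t\}\subset S$ and the pointwise identity only yields $(\Lambda-\alpha)c=0$, which is useless when $c=0$ and merely gives $\Lambda=\alpha$ when $c>0$, neither of them a contradiction. To extract one, I would propagate the constancy of $u$ from a positive-measure subset to an open set. On the open set $\{u^2<t\}$ the equation reads $\Delta^2 u=(\Lambda-\alpha)u$, a constant-coefficient elliptic PDE, so by analytic hypoellipticity $u$ is real analytic on $\{u^2<t\}$. The set $\{u=c\}$ intersects at least one connected component $U$ of $\{u^2<t\}$ in a set of positive measure, hence \cite[Proposition~0]{Mityagin} applied to $u-c$ forces $u\equiv c$ on $U$. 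Continuity of $u$ up to $\overline{U}$ then closes the argument: either some point of $\partial U$ lies in $\Omega$, where by the definition of $U$ one has $u^2=t$, so $c^2=s=t$, contradicting $s<t$; or $\partial U\subset\partial\Omega$, in which case the connectedness of $\Omega$ forces $U=\Omega$, and then the boundary condition $u|_{\partial\Omega}=0$ gives $c=0$, hence $u\equiv 0$ on $\Omega$, a contradiction to $u\not\equiv 0$.
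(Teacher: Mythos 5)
Your proof is correct and follows essentially the same strategy as the paper's: apply \cite[Lemma~7.7]{GT} to get $\Delta^2 u = 0$ a.e.\ on a level set of positive measure, substitute into the Euler--Lagrange equation, and split according to the position of $s$ relative to $t$. The cases $s>t$ and $s=t$ are handled identically in both proofs, and for $s=t=0$ both defer to the analyticity/Mityagin argument in the proof of Theorem~\ref{Theo1}(b).

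Where you genuinely improve on the paper is the range $0\le s<t$. The paper observes that $v:=u-\sqrt{s}$ satisfies a constant-coefficient elliptic equation on the open set $\{u^2<t\}$, hence is real analytic there, and then simply declares $|\{v=0\}|=0$. But \cite[Proposition~0]{Mityagin} applies only to a real analytic function that is not identically zero on the connected open set in question; the paper never rules out that $v\equiv 0$ on some component $U$ of $\{u^2<t\}$ -- and in the critical case $\alpha=\bar\alpha(A)$, i.e.\ $\Lambda=\alpha$, the pointwise relation $(\Lambda-\alpha)\sqrt{s}=0$ provides no obstruction to this. Your boundary/connectedness argument closes exactly this loophole: if $u\equiv c$ on a component $U$, then by continuity of $u$ on $\overline{U}$ either some $x_0\in\partial U\cap\Omega$ exists, where $u^2(x_0)=t$ forces $c^2=t$, contradicting $s<t$; or $\partial U\subset\partial\Omega$, so $U$ is clopen in $\Omega$, hence $U=\Omega$, and $u|_{\partial\Omega}=0$ gives $c=0$ and $u\equiv 0$, which is absurd. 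This is a real gain in rigor over the published argument, and the ingredients you invoke for it (the $C^{3,\gamma}(\overline\Omega)$ regularity for continuity up to the boundary, the connectedness of $\Omega$) are all available. One cosmetic remark: you restrict to $c\ge 0$; since $\{u^2=s\}$ may concentrate on $\{u=-\sqrt s\}$, you should either say explicitly that the argument for $c=-\sqrt s$ is identical, or observe that $(-u,S)$ is again a G-optimal pair so WLOG $c\ge 0$.
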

\begin{proof} 
We use the same notation as in the proof of Theorem \ref{Theo1}-(b). 
The argument is similar to the one contained in \cite[Theorem 1-(c)]{CGIKO00}, but we present
it here for the sake of completeness. If $s>t$, $\mathcal N_s\subset S^c$. Hence, 
$$0=\Delta^2 u=(\Lambda \cdot \mathrm{Id}-\alpha\chi_S)u= \Lambda u\quad\mbox{a.e. on }\mathcal N_s.$$
Since $\Lambda >0$ and $u\neq 0$ on $\mathcal N_s$, $|\mathcal N_s|=0$. 
Now, if $s=t$, $\mathcal N_s\subset S$ and so 
$$0=(\Lambda-\alpha) u\quad\mbox{a.e. on }\mathcal N_s.$$
Thus, if $\alpha\neq\bar\alpha(A)=\Lambda$, 
we can conclude again that $|\mathcal N_{t}|=0$. 
Finally, if $s<t$, again $\mathcal N_s\subset S$ and $\Delta^2u=(\Lambda-\alpha)u$ 
in the open set $\{u^2<t\}$. The function $v:=u-s$ solves the equation 
$$\Delta^2v=(\Lambda-\alpha)v+(\Lambda-\alpha) s\quad\mbox{in }\{u^2<t\}.$$
Therefore, $v$ is a real analytic function and so $|\{v=0\}|=|\{u=s\}|=0$.
\end{proof}

\section{Proof of Theorem \ref{Struct}}\label{Relation}
In this section, as in \cite{CGIKO00}, we highlight the relations between the two problems \eqref{Lambda} and \eqref{CP}, which  will be useful in proving the symmetry results later on.

\begin{proof}[$\bullet$ Proof of Theorem \ref{Struct}]
For $(a)$, let us consider a CP-minimizer $(u, \rho)$. 
We write any $\rho\in\mathrm{P}$ as $\rho = H + (\rho -H)$ and so the PDE in \eqref{4EL} (or \eqref{4ELd}) reads as
\begin{equation}\label{ELP2}
\Delta^2 u + \Theta (H-\rho) u = \Theta H u \quad \textrm{in } \Omega.
\end{equation}
\smallskip

\underline{\it Claim}: it is possible to choose $\alpha>0$ 
and $A\in[0,|\Omega|]$ for which \eqref{ELP2} can be seen in the form
$$
\Delta^2 u+\alpha\eta u=\Lambda(\alpha,A) u\quad\mbox{in }\Omega
$$
for some $\eta\in\mathcal A:=\{\eta:\Omega\to\mathbb R\,:\,0 \leq \eta \leq 1, \,\int_{\Omega}\eta= A\}$.
\medskip

In order to prove the claim, we put 
\begin{equation}\label{choice}
\alpha:=\Theta(H-h)>0,\quad\eta:=\frac{H-\rho}{H-h},\quad\mbox{and consequently}\quad A:=\frac{H |\Omega|-M}{H-h}.
\end{equation}
Thus, we need to show that 
\begin{enumerate}
\item[(i)] $0\le\frac{H-\rho}{H-h}\le1$;\\
\item[(ii)] $0\le\frac{H \, |\Omega|-M}{H-h}\le |\Omega|$;\\
\item[(iii)] $\Lambda\left(\Omega,\Theta(H-h),\frac{H\, |\Omega|-M}{H-h}\right)=\Theta H$.
\end{enumerate}
Now, (i) follows immediately by the bounds $h\le\rho\le H$, 
while (ii) follows from the assumption $M\in [h |\Omega|,H |\Omega|]$.
Hence, Proposition \ref{min-pb} applies and we know that 
$$
\Lambda(\alpha,A)=\inf_{\eta\in\mathcal A}\inf_{u\in \mathcal{W}\setminus\{0\}}\frac{\int_\Omega(\Delta u)^2+\alpha\int_\Omega\eta u^2}{\int_\Omega u^2},
$$
with $\alpha,\,\eta,\,A$ as in \eqref{choice}.
In terms of $\rho$, by \eqref{choice} this reads as
\begin{equation}\label{ThetaH}
\Lambda(\alpha,A)=\Theta H+\inf_{\rho\in\mathrm{P}}\inf_{u\in \mathcal{W}\setminus\{0\}}\frac{\int_\Omega(\Delta u)^2-\Theta\int_\Omega\rho u^2}{\int_\Omega u^2}.
\end{equation}
By the definition of $\Theta$ as an infimum, $\int_\Omega(\Delta u)^2-\Theta\int_\Omega\rho u^2\ge0$, hence 
\begin{equation}\label{ge0}
\inf_{\rho\in\mathrm{P}}\inf_{u\in \mathcal{W}\setminus\{0\}}\frac{\int_\Omega(\Delta u)^2-\Theta\int_\Omega\rho u^2}{\int_\Omega u^2}\ge0.
\end{equation}
On the other hand, since $\rho\le H$, and using again the definition of $\Theta$, we get 
\begin{equation}\label{le0}
\begin{aligned}
\inf_{\rho\in\mathrm{P}}\inf_{u\in \mathcal{W}\setminus\{0\}}\frac{\int_\Omega(\Delta u)^2-\Theta\int_\Omega\rho u^2}{\int_\Omega u^2}&= \inf_{\rho\in\mathrm{P}}\inf_{u\in \mathcal{W}\setminus\{0\}}\left(\frac{\int_\Omega(\Delta u)^2}{\int_\Omega\rho u^2}-\Theta\right)\frac{\int_\Omega\rho u^2}{\int_\Omega u^2}\\
&\le H\inf_{\rho\in\mathrm{P}}\inf_{u\in \mathcal{W}\setminus\{0\}}\left(\frac{\int_\Omega(\Delta u)^2}{\int_\Omega\rho u^2}-\Theta\right)=0.
\end{aligned}
\end{equation}
Combining together \eqref{ge0} and \eqref{le0}, we obtain
$$
\inf_{\rho\in\mathrm{P}}\inf_{u\in \mathcal{W}\setminus\{0\}}\frac{\int_\Omega(\Delta u)^2-\Theta\int_\Omega\rho u^2}{\int_\Omega u^2}=0,
$$
and, by \eqref{ThetaH}, (iii) is proved. This concludes the proof of the claim.

Now, by Proposition \ref{min-pb}, we know that $\eta = \chi_S$
for a set $S\in\mathcal S_t$ as in \eqref{SetS}. Hence, by \eqref{choice},
$$
\rho=H-(H-h)\eta=H-(H-h)\chi_S=h\chi_S+H\chi_{S^c},
$$ 
which closes the proof of part $(a)$. 

We are now ready to prove $(b)$. Here and in what follows, 
$\rho$ and $S$ are as in the statement of part $(a)$. 
We first observe that the ``only if'' part and the fact 
that $\Lambda(\alpha,A) = \Theta(h,H,M) H$ for 
$\alpha$, $A$ as in \eqref{Alpha}-\eqref{A} have been shown in the proof of $(a)$. 
Hence, it remains to prove that if $(u,S)$ realizes $\Lambda:=\Lambda(\alpha,A)$, 
with $\alpha$ as in \eqref{Alpha}, and $A$ as in \eqref{A}, 
then $(u,\rho)$ realizes $\Theta:=\Theta(h,H,M)$.
By assumption, we have
\begin{equation*}
\Lambda=\Theta H=\frac{\int_\Omega(\Delta u)^2+\Theta(H-h)\int_\Omega\chi_S u^2}{\int_\Omega u^2},
\end{equation*}
thus
\begin{equation*}
\Theta=\frac{\int_\Omega(\Delta u)^2-\Theta\int_\Omega\rho u^2+\Theta H\int_\Omega(\chi_S+\chi_{S^c})u^2}{H\int_\Omega u^2}=\frac{\int_\Omega(\Delta u)^2-\Theta\int_\Omega\rho u^2}{H\int_\Omega u^2}+\Theta. 
\end{equation*}
Therefore, 
$$
\Theta\int_\Omega\rho u^2=\int_\Omega(\Delta u)^2,
$$
that is $(u,\rho)$ realizes $\Theta$.

For part (c), we observe that, by $h<H$, $\Lambda>0$ 
and \eqref{Alpha}, we immediately get $\alpha>0$. Furthermore, if $A\in[0,|\Omega|)$, 
$$
\alpha=\frac{H-h}{H}\Lambda(\alpha,A)\begin{cases}
<\Lambda(\alpha,A)\quad	&\mbox{if }h>0,\\
=\Lambda(\alpha,A)\quad	&\mbox{if }h=0.
\end{cases}
$$
While, if $A=|\Omega|$, $S=\Omega$ and $\rho\equiv h$ by part (a). 
Thus, $\Theta(H,h,M)=\mu(\Omega)/h$ for any $H>h$. Hence, by \eqref{Alpha}, 
$\alpha=(H-h)\mu(\Omega)/h$ can take any value in $(0,\infty)$ varying $H\in(h,\infty)$.
\end{proof}

\begin{remark}\label{case_h=0}
We end this section by noting explicitly that, by part (a) of the previous theorem it follows in particular that, if $h=0$, $\rho\equiv 0$ in $S$. Now, since $|\{\rho=0\}|=0$ by the definition of admissible densities $\rho\in\mathrm{P}$, $|S|=0$. Moreover, since $S$ is defined up to a zero-measure set, $S^c=\Omega$. Therefore, when $h=0$ problem \eqref{CP} reduces to the standard eigenvalue problem for the biharmonic operator.\\
We further observe that by the very definition of $t=t(u)$ in \eqref{SetS}, denoting by $(u, \rho_u)$ a CP-optimal pair,
since $\rho_u = h \chi_{\{u^{2}\leq t(u)\}} + H \chi_{\{u^2 > t(u)\}}$, we have that $\rho_{\mu u} = \rho_u$. Indeed
$t(\mu u)= \mu^2 t(u)$ and so $\{u^{2}\leq t(u)\} = \{(\mu u)^{2}\leq t(\mu u)\}$. 
\end{remark}

\section{Proof of Theorem \ref{Theo-ball}}\label{Sec5} 
The aim of this section is to address qualitative properties of the CP-optimal pairs 
$(u,\rho)$, such as positivity and radial symmetry 
in the case $\Omega=B:=\{x\in\mathbb R^n\,:\,|x|<1\}$. 

We start with the positivity of $u$.  

\begin{proposition}\label{positivity}
Let $\Omega=B$ and let $(u,\rho)$ be a CP-optimal pair, then $u>0$ in $B$.
\end{proposition}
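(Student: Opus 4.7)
The plan is to exploit the positivity of the Green function $G(x,y)$ of $\Delta^2$ on $B$ with the prescribed boundary conditions: this is Boggio's classical formula in the Dirichlet case, and in the Navier case it follows at once from the decomposition into the cooperative second-order system $-\Delta u = v$, $-\Delta v = f$, $u=v=0$ on $\partial B$, together with the positivity of the Green function of $-\Delta$ with homogeneous Dirichlet data. By Theorem \ref{Struct} and \eqref{4EL}-\eqref{4ELd}, $u$ satisfies $\Delta^2 u = \Theta\rho u$ in $B$ with the relevant BC, hence admits the representation
$$u(x)=\int_B G(x,y)\,\Theta\rho(y)\,u(y)\,dy\qquad\text{for every }x\in B.$$

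First I would introduce the companion function $\tilde u(x):=\int_B G(x,y)\,\Theta\rho(y)\,|u(y)|\,dy$, which by Proposition \ref{Reg} lies in $\mathcal{W}\cap C^{3,\gamma}(\overline{B})$ and solves $\Delta^2\tilde u=\Theta\rho|u|$ with the same BC. Positivity of $G$ yields $\tilde u\ge|u|$ pointwise in $B$, so by integration by parts,
$$\int_B(\Delta\tilde u)^2=\Theta\int_B\rho|u|\tilde u\le\Theta\int_B\rho\tilde u^2,$$
whence the Rayleigh quotient of $\tilde u$ is at most $\Theta$. By the very definition of $\Theta$ this must be an equality, forcing $|u|=\tilde u$ a.e.\ on $\{\rho>0\}$; since $\rho>0$ a.e.\ in $B$ (automatic when $h>0$, and when $h=0$ directly from Remark \ref{case_h=0}, which yields $\rho\equiv H$), and both $u$ and $\tilde u$ are continuous, one concludes $|u|\equiv\tilde u$ throughout $B$.

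Then I would extract the sign of $u$ from this identity: writing $u=u^+-u^-$ and combining $|u|\pm u=2u^\pm$ with $\tilde u\pm u=2\int_B G\,\Theta\rho\,u^\pm\,dy$, one obtains
$$u^\pm(x)=\int_B G(x,y)\,\Theta\rho(y)\,u^\pm(y)\,dy\qquad\text{for every }x\in B.$$
If $u$ changed sign, both $u^+$ and $u^-$ would be non-trivial; positivity of $G$ would then force both to be strictly positive throughout $B$, contradicting $u^+u^-\equiv 0$. Hence $u$ has constant sign; WLOG $u\ge 0$, and plugging back into $u(x)=\int_B G(x,y)\,\Theta\rho(y)\,u(y)\,dy$ with $G>0$ on $B\times B$ and $\rho u\not\equiv 0$ gives $u>0$ in $B$.

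The main obstacle I expect is making the Green-function machinery uniformly available in both boundary-condition settings: in the Navier case there is no explicit Boggio-type formula, so one must pass through the second-order cooperative-system reformulation and invoke the maximum principle for $-\Delta$. A secondary technical point is ensuring that $\tilde u\in\mathcal W$ so that the integration by parts is legitimate, which follows from the $L^\infty$-bound on $\rho|u|$ via Proposition \ref{Reg}; once these are in hand, the replacement-plus-strict-positivity-of-$G$ scheme above runs identically in both cases.
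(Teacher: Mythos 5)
Your proof is correct and follows essentially the same strategy as the paper's: construct the auxiliary function solving $\Delta^2\tilde u=\Theta\rho|u|$ with the same boundary conditions, compare $|u|$ with $\tilde u$ using positivity on the ball, and derive a contradiction from the Rayleigh-quotient minimality of $\Theta$. The paper invokes Lemma~\ref{max_principle} (which encodes the same Green-function positivity abstractly) applied to $\tilde u\pm u$, whose dichotomy ``$u\equiv\pm\tilde u$ or $|u|<\tilde u$ a.e.'' closes the argument directly; you instead use the explicit positivity of $G$ to get $|u|\le\tilde u$, force equality via the Rayleigh quotient, and then extract the sign from the fixed-point identities for $u^\pm$ --- a slightly longer but equivalent route.
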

\begin{proof}
Let $w$ be a solution of 
\begin{equation}\label{Pw}
\Delta^2 w=\Theta \rho |u|\quad\mbox{in }B,
\end{equation}
coupled either with Navier or with Dirichlet boundary conditions.
By Lemma \ref{max_principle}, $w>0$ a.e. in $B$, otherwise 
we would have $u\equiv 0$ in $B$ which is impossible. Now, suppose by 
contradiction that $u$ is sign-changing and consider the functions $w-u$ and $w+u$. Then
$$
\Delta^2(w-u)=2\Theta \rho u^-\quad\mbox{and}\quad\Delta^2(w+u)=2\Theta \rho u^+\quad\mbox{in }B.
$$ 
Hence, 
$$
\int_B\Delta(w-u)\Delta v dx\ge0\quad\mbox{and}\quad\int_B\Delta(w+u)\Delta v dx\ge0\quad\mbox{for all }v\in\mathcal C^+.
$$ 
Again by Lemma \ref{max_principle}, we get that either 
$\pm u\equiv w$ or $|u|<w$ a.e. in $B$. In the first case, 
being $w>0$, up to a change of sign of $u$, we are done. 
In the latter, we multiply \eqref{Pw} by $w$, integrate over $B$ and get 
$$
\int_B(\Delta w)^2=\Theta\int_B \rho |u|w<\Theta\int_B \rho w^2,
$$
which implies
$$
\frac{\int_B(\Delta w)^2}{\int_B \rho w^2}<\Theta.
$$ 
This contradicts the minimality of $\Theta$ and concludes the proof.
\end{proof}

\begin{remark}
As for Lemma \ref{max_principle}, if we deal with 
Navier boundary conditions, we can consider more
general open sets in Proposition \ref{positivity}.\\  
There is a simple consequence of the positivity
result in Proposition \ref{positivity}: for $\alpha\le\bar\alpha(A)$
we have an equivalence between \eqref{CP} and \eqref{Lambda} therefore, recalling
Theorem~\ref{Struct}, the optimal set $S$ can be written as 
a sub-level set of the function $u$ itself, i.e.
$$S=\{u\le \sqrt{t}\}.$$
\end{remark}
\medskip

For the symmetry issues we need to distinguish the case with Dirichlet boundary conditions from the one with Navier boundary conditions. 

Before proving the symmetry result for {\it Dirichlet boundary conditions}, we need to prove some 
preliminary lemmas. 

In the rest of the section we consider a CP-optimal pair $(u,\rho)$ and we extend  
$u\in C_0(\overline{B}):=\{\varphi\in C(\overline{B})\,:\, \varphi=0\mbox{ on }\partial B\}$ 
by defining it to be zero outside 
$B$. We must consider an extension of $\rho$ as well.
We will denote it by 
$$\rho_u:=h\chi_{\{u\le\sqrt{t}\}}+H\chi_{\{u>\sqrt{t}\}},$$
\noindent where we are considering sub-level sets of the extended
function $u$.

\begin{lemma}\label{Lemma-fufuh}
Let $\mathcal H\subset\mathbb R^n$ be a half-space. Then
$$
[\rho_u \, u]_{\mathcal H}\equiv\rho_{u_\mathcal H} \, u_{\mathcal H}.
$$
\end{lemma}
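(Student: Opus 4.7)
The strategy is to rewrite the product $\rho_u u$ as a non-decreasing real function of $u$ alone, and then exploit the fact that polarization commutes with any non-decreasing composition.

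First, by Proposition~\ref{positivity} and the zero extension outside $B$, the function $u$ is non-negative on $\mathbb R^n$. I would therefore introduce the auxiliary function
\begin{equation*}
g(s) := h\, s\, \chi_{[0,\sqrt{t}\,]}(s) + H\, s\, \chi_{(\sqrt{t},+\infty)}(s), \qquad s \in [0,+\infty),
\end{equation*}
with $t$ the threshold coming from \eqref{SetS} for $u$. By the very definition of $\rho_u$ given before Lemma~\ref{Lemma-fufuh}, we have the pointwise identity $\rho_u(x)\,u(x) = g(u(x))$ on $\mathbb R^n$. Since $h<H$ and $\sqrt{t}\ge 0$, the only possible discontinuity of $g$, at $s=\sqrt{t}$, is the upward jump from $h\sqrt{t}$ to $H\sqrt{t}$, so $g$ is non-decreasing on $[0,+\infty)$.

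Next I would use the elementary fact that, for any non-decreasing $g:[0,+\infty)\to[0,+\infty)$ and any $a,b\ge 0$,
\begin{equation*}
\max\{g(a),g(b)\} = g(\max\{a,b\}),\qquad \min\{g(a),g(b)\}=g(\min\{a,b\}).
\end{equation*}
Applied pointwise with $a=u(x)$ and $b=u(\bar x)$, together with the definitions of $[\,\cdot\,]_{\mathcal H}$ and of $u_{\mathcal H}$, this yields immediately
\begin{equation*}
[\rho_u u]_{\mathcal H}(x) = g(u_{\mathcal H}(x)) \qquad\mbox{for every } x\in\mathbb R^n.
\end{equation*}

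To conclude, I need to recognize $g(u_{\mathcal H})$ as $\rho_{u_{\mathcal H}}\, u_{\mathcal H}$, and the only real point to verify is that the threshold associated to $u_{\mathcal H}$ via \eqref{SetS} agrees with $\sqrt{t}$; this is the step I expect to be the most delicate. However, polarization preserves the (compact) support in $\overline B$ and, by Proposition~\ref{equimisurabilita} applied to the non-negative $L^1$ functions $u$ and $u_{\mathcal H}$, one has $|\{u^2>s\}|=|\{u_{\mathcal H}^2>s\}|$ for every $s>0$; taking complements in $B$ yields the equality of the measures of the sub-level sets in $B$, so the supremum in \eqref{SetS} is the same for $u$ and $u_{\mathcal H}$. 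Hence $\rho_{u_{\mathcal H}}(x)\,u_{\mathcal H}(x)=g(u_{\mathcal H}(x))$ pointwise, and combining with the previous identity closes the proof.
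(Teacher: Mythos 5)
Your proof is correct and takes a genuinely cleaner route than the paper's. The paper verifies the identity by a direct four-way case analysis according to whether $x$ and $\bar x$ lie in $\{u\le\sqrt t\}$ or its complement. You instead isolate the structural feature that makes this work: the pointwise identity $\rho_u(x)\,u(x)=g(u(x))$ with $g$ non-decreasing, combined with the elementary commutation of polarization with post-composition by non-decreasing maps, $[g\circ u]_{\mathcal H}=g\circ u_{\mathcal H}$ (which holds since $u\ge0$ everywhere by Proposition~\ref{positivity} and the zero extension). This is exactly the principle that the introduction of the paper alludes to when it says that the monotonicity of $\Theta\rho u$ in $u$ is what makes the polarization technique go through, and your proof makes it explicit. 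One remark on your last paragraph: under the paper's convention, $\rho_{u_{\mathcal H}}$ is defined using the \emph{same} fixed threshold $\sqrt{t}$ coming from $u$ (this is how it is written out in the paper's own proof), so there is nothing to verify there; but your observation via Proposition~\ref{equimisurabilita} that $t(u_{\mathcal H})=t(u)$ is correct and worthwhile, since it shows the two natural readings of the notation $\rho_{u_{\mathcal H}}$ coincide, and it is precisely this equimeasurability that is later needed to conclude $\int_B\rho_{u_{\mathcal H}}=M$ in the proof of Theorem~\ref{Theo-ball}.
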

\begin{proof}
We prove this lemma by using the definitions of the two functions involved, namely 
\begin{equation*}
[\rho_{u} \, u]_{\mathcal H}(x)=\left\{ \begin{array}{rl}
\max\{\rho_{u}(x) u(x),\rho_{u}(\bar x)u(\bar x)\}, & \quad \mbox{if }x\in\mathcal H,\\
\min\{\rho_{u}(x) u(x),\rho_{u}(\bar x)u(\bar x)\}, & \quad \mbox{if }x\in\mathbb R^n\setminus\mathcal H,
\end{array}\right.
\end{equation*}
and
$$
\rho_{u_\mathcal H}(x)u_{\mathcal H}(x)=\begin{cases}
hu_\mathcal H(x),\quad&\mbox{if }u_\mathcal H(x)\le \sqrt{t},\\
Hu_\mathcal H(x),\quad&\mbox{if }u_\mathcal H(x)> \sqrt{t}.
\end{cases}
$$
Now, for every $x\in\mathbb R^n$ four cases may occur: 
\begin{itemize}
\item $x\in\{u\le \sqrt{t}\}$ and $\bar x\in \{u\le \sqrt{t}\}$;
\item $x\in\{u\le \sqrt{t}\}$ and $\bar x\not\in \{u\le \sqrt{t}\}$;
\item $x\not\in\{u\le \sqrt{t}\}$ and $\bar x\in \{u\le \sqrt{t}\}$;
\item $x\not\in\{u\le \sqrt{t}\}$ and $\bar x\not\in \{u\le \sqrt{t}\}$.
\end{itemize}
We start with considering $x\in\mathcal H$. 
In the first case
$$[\rho_u \, u]_{\mathcal H}(x)=\max\{h u(x),h u(\bar x)\}=hu_\mathcal H(x).$$
Furthermore, since $u(x)\le\sqrt{t}$ and $u(\bar x)\le\sqrt{t}$, 
also $u_\mathcal H(x)=\max\{u(x),u(\bar x)\}\le\sqrt{t}$ and so  
$$\rho_{u_\mathcal H}(x)u_{\mathcal H}(x)=hu_\mathcal H(x).$$
If the second case occurs, we know that $u(\bar x)>u(x)$ and consequently
$$[\rho_u\, u]_{\mathcal H}(x)=\max\{h u(x),H u(\bar x)\}=H u(\bar x)=Hu_\mathcal H(x).$$
On the other hand, since $u(\bar x)>\sqrt{t}$, also 
$u_\mathcal H(x)=\max\{u(x),u(\bar x)\}>\sqrt{t}$, which implies 
$$\rho_{u_\mathcal H}(x)u_{\mathcal H}(x)=Hu_\mathcal H(x)$$
and concludes the proof also in this case. With similar arguments 
it is possible to check the remaining cases both for 
$x\in\mathcal H$ and $x\in\mathbb R^n\setminus\mathcal H$. 
\end{proof}

\begin{lemma}\label{Lemma-uuh}
Let $(u,\rho_{u})$ be a CP-optimal pair in the ball $B$ 
and $\mathcal H\subset\mathbb R^n$ a half-space. If 
$\rho_u  u=[\rho_u u]_{\mathcal H}$,
then $u=u_\mathcal H$. 
\end{lemma}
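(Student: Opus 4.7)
The plan is to prove the pointwise inequality $u(x) \geq u(\bar x)$ for every $x \in \mathcal{H}$, from which $u = u_\mathcal H$ follows directly by the definition of polarization. By Theorems~\ref{Struct}(a) and~\ref{Theo1}(a), the CP-optimal $u$ is a strong solution of the Dirichlet problem $\Delta^2 u = \Theta \rho_u u$ in $B$ with $u = \tfrac{\partial u}{\partial \nu} = 0$ on $\partial B$, so standard elliptic theory gives the Green's representation
$$u(x) = \Theta \int_B G(x,y)\,f(y)\,dy,\qquad x\in B,$$
where $f := \rho_u u \geq 0$ and $G$ is the biharmonic Green's function of $B$. With the zero extension, $u \equiv 0 \equiv f$ outside $B$, and by Lemma~\ref{Lemma-fufuh} the hypothesis $\rho_u u = [\rho_u u]_\mathcal H$ is equivalent to $f = f_\mathcal H$, i.e.\ $f(y) \geq f(\bar y)$ whenever $y \in \mathcal H$.

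The argument exploits a change of variables $y \mapsto \bar y$ in the Green's representation. In the regime $0 \in \mathrm{int}(\mathcal{H})$ under which Lemma~\ref{Lemma-G} applies, reflection across $\partial \mathcal H$ maps $B\setminus\mathcal H$ bijectively onto the subset $A_1 := \{y \in \mathcal H\cap B : \bar y \in B\}$ of $\mathcal H\cap B$; set $A_2 := (\mathcal H\cap B) \setminus A_1$. Fix $x \in A_1$, so that $\bar x \in B\setminus \mathcal H$ and $G(\bar x,\cdot)$ is well defined. Subtracting the representations of $u(x)$ and $u(\bar x)$ and performing the reflection in the integral over $B\setminus \mathcal H$ produces
$$\begin{aligned}u(x)-u(\bar x) &= \Theta\!\int_{A_1}\!\bigl\{[G(x,y)-G(\bar x,y)]f(y) + [G(x,\bar y)-G(\bar x,\bar y)]f(\bar y)\bigr\}dy\\ &\quad + \Theta\!\int_{A_2}\![G(x,y)-G(\bar x,y)]f(y)\,dy.\end{aligned}$$

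The $A_2$-integrand is non-negative by Lemma~\ref{Lemma-G}(i) and $f \geq 0$. For the $A_1$-integrand, set $\alpha := G(x,y)-G(\bar x, y)$ and $\beta := G(x,\bar y) - G(\bar x, \bar y)$. Lemma~\ref{Lemma-G}(i) gives $\alpha \geq 0$, while rearranging Lemma~\ref{Lemma-G}(ii), namely $G(x,y) - G(\bar x, \bar y) \geq |G(x,\bar y) - G(\bar x, y)|$, yields $\alpha + \beta \geq 0$ for either sign of $G(x,\bar y) - G(\bar x, y)$. Combined with $f(y) \geq f(\bar y) \geq 0$, this gives $\alpha f(y) + \beta f(\bar y) \geq (\alpha+\beta)f(\bar y) \geq 0$ even when $\beta < 0$, using the elementary bound $\alpha f(y) \geq \alpha f(\bar y)$.

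Therefore $u(x) \geq u(\bar x)$ on $A_1$, trivially on $A_2$ (where $\bar x \notin B$ forces $u(\bar x) = 0 \leq u(x)$) and on $\mathcal H\setminus B$ (where both sides vanish, since $|\bar x| \geq |x| \geq 1$ in that region). Swapping $x$ and $\bar x$ yields the reverse inequality on $\mathcal H^c$, so $u = u_\mathcal H$ on $\mathbb R^n$. The main obstacle is handling the $A_1$-integrand: the basic monotonicity Lemma~\ref{Lemma-G}(i) is insufficient because the reflection is no longer an isometry of $B$, a property that would otherwise force $G(x,\bar y) = G(\bar x, y)$; the finer symmetric estimate Lemma~\ref{Lemma-G}(ii) is exactly what is needed to control the sign of the cross-terms in the displayed identity.
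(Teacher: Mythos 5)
Your proof is correct under the additional (implicit) assumption $0 \in \mathrm{int}(\mathcal H)$, but it takes a genuinely different — and considerably heavier — route than the paper. The paper's argument is purely algebraic and pointwise: writing $\Phi(s) = hs$ for $s \le \sqrt t$ and $\Phi(s) = Hs$ for $s > \sqrt t$, one has $\rho_u u = \Phi\circ u$ and (via Lemma~\ref{Lemma-fufuh}) the hypothesis becomes $\Phi\circ u = \Phi\circ u_{\mathcal H}$ pointwise; when $h > 0$ the map $\Phi$ is injective, so $u = u_{\mathcal H}$ immediately, and the degenerate case $h = 0$ is disposed of with a short case analysis using Remark~\ref{case_h=0}. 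Your proof never touches this injectivity; instead it passes through the biharmonic Green representation of $u$ and rederives, in effect, the sign comparison of Lemma~\ref{Lemma-w}(i) with the kernel $f = \rho_u u$ in place of $\rho_{u_{\mathcal H}}u_{\mathcal H}$. The reflection splitting and the use of Lemma~\ref{Lemma-G}(i)--(ii) to control $\alpha f(y) + \beta f(\bar y)$ are all correct, and your route has the modest virtue of treating $h > 0$ and $h = 0$ uniformly. The costs are real, though: Lemma~\ref{Lemma-G} (and the fact that reflection sends $B \setminus \mathcal H$ into $B$) requires $0 \in \mathrm{int}(\mathcal H)$, a hypothesis absent from the lemma statement (acceptable here only because that is the sole regime in which Lemma~\ref{Lemma-uuh} is later invoked); the argument is bound to the Dirichlet Green function, whereas the paper's proof is boundary-condition-agnostic; and it uses the PDE structure of $u$, whereas the paper's proof needs nothing but the definitions. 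Also, the appeal to Lemma~\ref{Lemma-fufuh} to deduce ``$f = f_{\mathcal H}$'' is superfluous — the hypothesis $\rho_u u = [\rho_u u]_{\mathcal H}$ \emph{is} that statement, since $f := \rho_u u$; Lemma~\ref{Lemma-fufuh} gives the different identity $[\rho_u u]_{\mathcal H} = \rho_{u_{\mathcal H}}u_{\mathcal H}$, which your argument does not actually use.
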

\begin{proof} 
Suppose first that $h>0$. By hypothesis and Lemma \ref{Lemma-fufuh}, we know that for every $x\in\mathbb R^n$
\begin{equation}\label{equiv}
\begin{cases}
hu(x)\quad&\mbox{if }u(x)\le\sqrt{t}\\
Hu(x)&\mbox{if }u(x)>\sqrt{t}
\end{cases}\quad= \quad
\begin{cases}
hu_\mathcal H(x)\quad&\mbox{if }u_\mathcal H(x)\le\sqrt{t}\\
Hu_\mathcal H(x)&\mbox{if }u_\mathcal H(x)>\sqrt{t}.
\end{cases}
\end{equation}
Suppose by contradiction that there exists $x\in\mathbb R^n$ 
such that $u_\mathcal H(x)\neq u(x)$. Then, if $u(x)\le \sqrt{t}$ 
and $u_\mathcal H(x)\le \sqrt{t}$, by \eqref{equiv}, 
$hu(x)=hu_\mathcal H(x)$ which is absurd. Analogously, 
the case $u(x)> \sqrt{t}$ and $u_\mathcal H(x)> \sqrt{t}$ 
cannot occur if $u_\mathcal H(x)\neq u(x)$. Now, 
if $u(x)\le \sqrt{t}$ and $u_\mathcal H(x)> \sqrt{t}$, 
by \eqref{equiv}, we get $hu(x)=Hu_\mathcal H(x)$ and 
so clearly $u(x)\neq 0$. Hence, 
$$
\sqrt{t}<u_\mathcal H(x)=\frac{h}{H}u(x)<u(x)\le \sqrt{t}
$$ 
which is a contradiction. Analogously, we can rule out 
the opposite case $u(x)> \sqrt{t}$ and $u_\mathcal H(x)\le \sqrt{t}$, 
and conclude the proof for $h>0$. 

If $h=0$, by Remark \ref{case_h=0}, $u>\sqrt{t}$ in $B$, and consequently $u_\mathcal H$ can attain values only in $\{0\}\cup (\sqrt{t},\infty)$. Now, if $x\in\mathbb R^n\setminus B$, $u(x)=0$. Then,  in view of \eqref{equiv}, for every $x\in\mathbb R^n\setminus B$
$$
0=\left\{\begin{array}{ll}
0,&\quad\mbox{if }u_\mathcal H(x)\le \sqrt{t},\\
Hu_\mathcal H(x),&\quad\mbox{if }u_\mathcal H(x)> \sqrt{t},
\end{array}
\right.
$$
which implies that $u_\mathcal H(x)\le \sqrt{t}$ and therefore $u_\mathcal H(x)=0$. Hence, $u(x)=0= u_\mathcal H(x) $ for every $x\in \mathbb R^n\setminus B$. Analogously it can be seen that $u\equiv u_\mathcal H$ in $B$ and the proof is concluded.
\end{proof}

Let $G: B\times B\to\mathbb R$ be the Green function for the biharmonic operator with Dirichlet boundary conditions on the ball. We recall that $G$ has an explicit representation due to Boggio \cite{Boggio}. We are now considering the trivial zero extension of $G$ to the whole of $\mathbb R^n\times \mathbb R^n$.  
We define $\tilde u:\mathbb R^n\to\mathbb R$ as 
$$
\tilde u(x):=\Theta\int_{\mathbb R^n}G(x,y)\rho_u (y)u(y)\, dy,
$$
then $\tilde u\equiv 0$ in $\mathbb R^n\setminus B$ 
and $\tilde u|_{B}$ is the unique solution of the problem 
\begin{equation*}
\left\{\begin{array}{rl}
\Delta^2 v=\Theta \left(h\chi_{\{u\le\sqrt{t}\}}+H\chi_{\{u>\sqrt{t}\}}\right)u, & \mbox{in }B,\medskip\\

v=\frac{\partial v}{\partial \nu}=0, & \mbox{on }\partial B.
\end{array}\right.
\end{equation*}
By uniqueness and by the trivial extension of $u$, 
if $(u,\rho)$ is a CP-optimal pair, $\tilde u\equiv u$.

\begin{lemma}\label{Lemma-w}
Let $\mathcal{H}$ be a half-space such that 
$0\in\mathrm{int}(\mathcal H)$, and for every $x\in\mathbb R^n$
\begin{equation}\label{def-w}
w(x):=\Theta\int_{\mathbb R^n} G(x,y)\rho_{u_{\mathcal H}}(y)u_{\mathcal H}(y)dx.
\end{equation}
Then the following inequalities hold
\begin{itemize}
\item[(i)] $w(x)\ge w(\bar x)$ for every $x\in\mathcal H$;
\item[(ii)] $w(x)\ge u_{\mathcal H}(x)$ for every $x\in\mathcal H$;
\item[(iii)] $w(x)+w(\bar x)\ge u_{\mathcal H}(x)+ u_{\mathcal H}(\bar x)$ for every $x\in\mathbb R^n$.
\end{itemize}
Moreover, if $\rho_u u \not\equiv[\rho_u u]_{\mathcal H}$, 
then (iii) is strict for every $x\in\mathrm{int}(B\cap\mathcal H)$.
\end{lemma}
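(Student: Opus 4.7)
The unifying idea is to exploit the Green-function representation. Since $u>0$ by Proposition \ref{positivity}, the density $f:=\rho_u u$ is non-negative, and by Lemma \ref{Lemma-fufuh} its polarization is $f_\mathcal H=\rho_{u_\mathcal H}u_\mathcal H$. Because $\tilde u\equiv u$, I have $u(x)=\Theta\int_{\mathbb R^n}G(x,y)f(y)\,dy$ and $w(x)=\Theta\int_{\mathbb R^n}G(x,y)f_\mathcal H(y)\,dy$. My strategy will be to fold every quantity in (i)--(iii) onto $\mathcal H$ via the isometric change of variables $y\mapsto \bar y$ on $\mathcal H^c$, combined---whenever both $f$ and $f_\mathcal H$ appear---with the polarization identity \eqref{identity}. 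The resulting integrands over $\mathcal H$ will be shown to be non-negative through Lemma \ref{Lemma-G} and the basic bounds $f_\mathcal H(y)\ge f_\mathcal H(\bar y)\ge 0$ and $f_\mathcal H(y)\ge f(y)$ valid on $\mathcal H$. Throughout I abbreviate $G_1:=G(x,y),\ G_2:=G(x,\bar y),\ G_3:=G(\bar x,y),\ G_4:=G(\bar x,\bar y)$, and set $P(x,y):=(G_1-G_3)+(G_2-G_4)=(G_1-G_4)+(G_2-G_3)$ and $Q(x,y):=(G_1-G_4)-(G_2-G_3)$, both of which are $\ge 0$ on $\mathcal H\times \mathcal H$ by Lemma \ref{Lemma-G}(ii).

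For (i), folding yields
\[
w(x)-w(\bar x)=\Theta\int_\mathcal H\bigl\{(G_1-G_3)f_\mathcal H(y)+(G_2-G_4)f_\mathcal H(\bar y)\bigr\}dy,
\]
and the identity $(G_1-G_3)f_\mathcal H(y)+(G_2-G_4)f_\mathcal H(\bar y)=(G_1-G_3)[f_\mathcal H(y)-f_\mathcal H(\bar y)]+P(x,y)f_\mathcal H(\bar y)$ exhibits the integrand as a non-negative combination, since $G_1\ge G_3$ by Lemma \ref{Lemma-G}(i). For (ii), I split $u_\mathcal H(x)=\max\{u(x),u(\bar x)\}$ and prove $w(x)\ge u(x)$ and $w(x)\ge u(\bar x)$ separately on $\mathcal H$. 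The first is immediate from
\[
w(x)-u(x)=\Theta\int_\mathcal H(G_1-G_2)[f_\mathcal H(y)-f(y)]\,dy
\]
via Lemma \ref{Lemma-G}(i) and $f_\mathcal H\ge f$ on $\mathcal H$. The second,
\[
w(x)-u(\bar x)=\Theta\int_\mathcal H\bigl\{G_1 f_\mathcal H(y)+G_2 f_\mathcal H(\bar y)-G_3 f(y)-G_4 f(\bar y)\bigr\}dy,
\]
I will handle by a case analysis on the sign of $f(y)-f(\bar y)$: if $f(y)\ge f(\bar y)$ the integrand rewrites as $(G_1-G_3)[f(y)-f(\bar y)]+P(x,y)f(\bar y)$, and if $f(y)<f(\bar y)$ it rewrites as $(G_1-G_4)[f(\bar y)-f(y)]+P(x,y)f(y)$; in both cases every factor is visibly non-negative.

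For (iii), applying \eqref{identity} to rewrite $u_\mathcal H(x)+u_\mathcal H(\bar x)=u(x)+u(\bar x)$ reduces the claim to $w(x)+w(\bar x)\ge u(x)+u(\bar x)$, and folding yields
\[
w(x)+w(\bar x)-u_\mathcal H(x)-u_\mathcal H(\bar x)=\Theta\int_\mathcal H Q(x,y)\,[f_\mathcal H(y)-f(y)]\,dy.
\]
A direct inspection shows $Q(\bar x,y)=Q(x,y)$, so the non-negativity $Q(x,y)\ge 0$ known for $x\in\mathcal H$ extends to every $x\in\mathbb R^n$, and (iii) follows. The main obstacle is the strictness assertion: I need to confine the set $E:=\{y\in\mathcal H:f_\mathcal H(y)>f(y)\}$ inside $\mathrm{int}(B\cap\mathcal H)$ in order to invoke the strict form of Lemma \ref{Lemma-G}(ii). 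Writing $\partial\mathcal H=\{\xi\cdot e=d\}$ with $d>0$ (since $0\in\mathrm{int}(\mathcal H)$), the direct computation
\[
|\bar y|^2=|y|^2+4d(d-y\cdot e)>|y|^2\quad\text{for every }y\in\mathcal H
\]
shows that $y\in\mathcal H\setminus B$ forces $\bar y\notin B$, hence $f(y)=f(\bar y)=0$ and $y\notin E$. Thus $E\subset\mathrm{int}(B)\cap\mathcal H=\mathrm{int}(B\cap\mathcal H)$, and for $x\in\mathrm{int}(B\cap\mathcal H)$ the strict version of Lemma \ref{Lemma-G}(ii) yields $Q(x,y)>0$ on $E$. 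Since $|E|>0$ and $f_\mathcal H-f>0$ on $E$, the integral displayed above is strictly positive, which completes the proof.
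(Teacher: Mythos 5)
Your proofs of (i), (ii) and (iii) are correct and genuinely more self-contained than the paper's: the authors simply cite \cite[Lemma~4]{FGW} for these three inequalities, whereas you fold the Green-function representation onto $\mathcal H$, use the polarization identity $f_\mathcal H(\bar y)-f(\bar y)=-(f_\mathcal H(y)-f(y))$, and reduce everything to the non-negativity of $G_1-G_2$, $G_1-G_3$, $P$ and $Q$ on $\mathcal H\times\mathcal H$, which is exactly Lemma~\ref{Lemma-G}. The algebraic rewritings in each case are all correct, and the observation $Q(\bar x,y)=Q(x,y)$ cleanly extends (iii) to all of $\mathbb R^n$.

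In the strictness argument there is a real gap at the assertion ``$|E|>0$.'' Your geometric computation $|\bar y|^2=|y|^2+4d(d-y\cdot e)$ correctly confines $E:=\{y\in\mathcal H:\rho_{u_\mathcal H}(y)u_\mathcal H(y)>\rho_u(y)u(y)\}$ inside $\mathrm{int}(B\cap\mathcal H)$ (this is the same geometric fact the paper records as \eqref{u=uH=0}), and the appeal to the strict form of Lemma~\ref{Lemma-G}(ii) is correct. But the hypothesis $\rho_u u\not\equiv[\rho_u u]_\mathcal H$ is interpreted in the paper pointwise: one knows only that some point $y_0\in\mathrm{int}(B\cap\mathcal H)$ satisfies $\rho_{u_\mathcal H}(y_0)u_\mathcal H(y_0)>\rho_u(y_0)u(y_0)$. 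Passing from $E\neq\emptyset$ to $|E|>0$ is precisely the nontrivial step, because $\rho_u u$ is \emph{discontinuous} across the level set $\{u=\sqrt t\}$ (it jumps from $h\sqrt t$ to $H\sqrt t$), so the inequality at $y_0$ does not automatically propagate to a neighborhood by continuity. The paper resolves this by the three-case analysis on the relative positions of $u(y_0)$ and $u_\mathcal H(y_0)$ with respect to $\sqrt t$, in each case producing an open $U_{y_0}\subset E$. You should either reproduce that case analysis, or justify that the hypothesis already implies $f\neq f_\mathcal H$ on a set of positive measure in $\mathcal H$ (e.g.\ by noting that $f\le f_\mathcal H$ pointwise on $\mathcal H$, and if they agreed a.e.\ on $\mathcal H$ then the polarization identity would force $f\equiv f_\mathcal H$ a.e.\ on all of $\mathbb R^n$) and make clear that this a.e.\ reading of $\not\equiv$ is the one intended.
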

\begin{proof}
For the proofs of (i), (ii), and (iii), we refer to \cite[Lemma 4]{FGW}. 
We now show the last part of the statement whose proof is slightly 
different from the one contained in \cite{FGW}, since in our case 
the function $f$ is $\rho_u u$ which is not continuous. However, formula (4.18) of \cite{FGW} still holds, namely for every $x\in\mathbb R^n$
\begin{equation}\label{ineq}
\left.\begin{array}{l}
w(x)+w(\bar x)-[u_\mathcal H(x)+u_\mathcal H(\bar x)]\\
=\Theta\displaystyle{\int_\mathcal H}\Big(G(x,y)+G(\bar x,y)-[G(x,\bar y)+G(\bar x,\bar y)]\Big)\Big(\rho_{u_\mathcal H}(y)u_\mathcal H(y)-\rho_u (y)u(y)\Big)dy \ge0. 
\end{array}\right.
\end{equation}
By Lemma \ref{Lemma-G}, we know that if $x,\,y\in\mathrm{int}(B\cap\mathcal H)$, 
$$G(x,y)+G(\bar x,y)>G(x,\bar y)+G(\bar x,\bar y),$$
thus, by \eqref{ineq} and by Lemma \ref{Lemma-fufuh}, 
it is enough to prove that we can find a positive-measure 
subset of $\mathrm{int}(B\cap\mathcal H)$ in which  
$$\rho_{u_\mathcal H} u_\mathcal H>\rho_u u.$$
We first observe that 
\begin{equation}\label{u=uH=0}
\rho_{u_\mathcal H} u_\mathcal H\equiv \rho_u u\equiv 0\quad\mbox{in }\mathbb R^n\setminus B.
\end{equation}
Indeed, since $u>0$ in $B$ and $u\equiv 0$ in $\mathbb R^n\setminus B$, 
\begin{equation}\label{uHxB}
u_\mathcal H(x)=\begin{cases}
\max\{0,u(\bar x)\}=u(\bar x),\quad&\mbox{if }x\in\mathcal H,\\
\min\{0,u(\bar x)\}=0,&\mbox{if }x\in\mathbb R^n\setminus\mathcal H
\end{cases}
\end{equation}
for every $x\in\mathbb R^n\setminus B$.
Furthermore, since $0\in\mathrm{int}(\mathcal H)$, 
$|\bar x|\ge |x|$ for every $x\in\mathcal H$.
Thus, $x\not\in B$ implies $\bar x\not\in B$, 
and so $u(\bar x)=0$ in the first line of the 
definition \eqref{uHxB}, which yields \eqref{u=uH=0}.  Moreover, $u_\mathcal H\equiv u$ on $B\cap \partial\mathcal H$, because for every $x\in\partial \mathcal H$ it holds $x=\bar x$.
Therefore, $\rho_{u_\mathcal H} u_\mathcal H\not\equiv\rho_u u$ 
ensures that there exists $y\in B\setminus\partial \mathcal H$ 
for which $\rho_{u_\mathcal H}(y)u_\mathcal H(y)\neq\rho_u (y)u(y)$. 
We can always assume $y\in\mathrm{int}(B\cap\mathcal H)$, 
since if this is not the case, $\bar y$ will do the job, 
being by \eqref{identity} and Lemma \ref{Lemma-fufuh}
$$0\neq\rho_{u_\mathcal H}(y)u_\mathcal H(y)-\rho_u (y)u(y)=\rho_u (\bar y)u(\bar y)-\rho_{u_\mathcal H}(\bar y)u_\mathcal H(\bar y).$$
Hence, there exists $y\in\mathrm{int}(B\cap\mathcal H)$ such that 
\begin{equation}\label{neq}
\begin{cases}
hu(y)\quad&\mbox{if }u(y)\le\sqrt{t}\\
Hu(y)&\mbox{if }u(y)>\sqrt{t}
\end{cases}\quad\neq\quad
\begin{cases}
hu_\mathcal H(y)\quad&\mbox{if }u(y)\le\sqrt{t}\\
Hu_\mathcal H(y)&\mbox{if }u(y)>\sqrt{t}.
\end{cases}
\end{equation}
Now, since $y\in\mathcal H$, if $u(y)>\sqrt{t}$, also $u_\mathcal H(y)>\sqrt{t}$, 
and so we have only the following three possible cases.
\smallskip

\underline{\it Case $u(y)>\sqrt{t}$.} By \eqref{neq} 
and the fact that $y\in\mathcal H$, we know that $Hu(y)<Hu_\mathcal H(y)$. 
Hence, by the continuity of $u$ and $u_\mathcal H$ 
we can find a neighborhood $U_y$ of $y$ such that 
$$U_y\subset\mathrm{int}(B\cap \mathcal H)\cap\{u>\sqrt{t}\}\cap\{u_\mathcal H>\sqrt{t}\}$$ and 
\begin{equation}\label{conclusion}
\rho_{u_\mathcal H}(x)u_\mathcal H(x)>\rho_u (x)u(x)\quad\mbox{for every }x\in U_y.
\end{equation}
\smallskip

\underline{\it Case $u(y)\le\sqrt{t}$ and $u_\mathcal H(y)>\sqrt{t}$.} 
Again, by \eqref{neq}, we get 
$Hu_\mathcal H(y)\neq hu(y)$, and since $u_\mathcal H\ge u$ 
in $\mathcal H$ and $H>h$, this yields 
$$Hu_\mathcal H(y)\ge Hu(y)>hu(y).$$ Now, if $u(y)<\sqrt{t}$, 
we can find a neighborhood $U_y$ such that 
$$U_y\subset\mathrm{int}(B\cap \mathcal H)\cap\{u<\sqrt{t}\}\cap\{u_\mathcal H>\sqrt{t}\}$$ 
and $Hu_\mathcal H(x)>hu(x)$ for every $x\in U_y$, 
that is to say \eqref{conclusion} holds also in this case.
If $u(y)=\sqrt{t}$ then clearly $u_\mathcal H(y)>u(y)$ 
and by continuity there exists a neighborhood 
$U_y\subset\mathrm{int}(B\cap\mathcal H)\cap\{u_\mathcal H >\sqrt{t}\}$ where  
$u_\mathcal H>u$. This implies that 
$$Hu_\mathcal H(x)>Hu(x)>hu(x)\quad\mbox{for every }x\in U_y,$$
and in turn \eqref{conclusion} holds for both  $x\in\{u\le\sqrt{t}\}$ and $x\in\{u>\sqrt{t}\}$. 
\smallskip

\underline{\it Case $u(y)\le\sqrt{t}$ and $u_\mathcal H(y)\le\sqrt{t}$.} 
By \eqref{neq}, we get $hu_\mathcal H(y)>hu(y)$ and by continuity 
we can find $U_y\subset\mathrm{int}(B\cap\mathcal H)$ where 
$u_\mathcal H>u$. Let $x\in U_y$. If $u_\mathcal H(x)\le\sqrt{t}$, 
then also $u(x)\le \sqrt{t}$, and so $hu_\mathcal H(x)>hu(x)$ is 
equivalent to $\rho_{u_\mathcal H}(x)u_\mathcal H(x)>\rho_u (x)u(x)$. 
If $u_\mathcal H(x)>\sqrt{t}$, then 
$$H u_\mathcal H(x)>Hu(x)>hu(x).$$
Hence, for both  $x\in\{u\le\sqrt{t}\}$ and $x\in\{u>\sqrt{t}\}$, 
$$\rho_{u_\mathcal H}(x)u_\mathcal H(x)>\rho_u (x)u(x).$$ Then, 
also in this case \eqref{conclusion} holds, which concludes the proof. 
\end{proof}

\begin{lemma}\label{Lemma-=}
Let $\mathcal H\subset \mathbb R^n$ be a half-space with 
$0\in\mathrm{int}(\mathcal H)$, and $w$ be defined as in \eqref{def-w}. Then,
\begin{equation}\label{inequality=}
\int_B w\rho_{u_\mathcal H} u_\mathcal H \le \int_B \rho_{u_\mathcal H} w^2.
\end{equation}
Furthermore, if equality holds, then $\rho_u u\equiv[\rho_u u]_{\mathcal H}$. 
\end{lemma}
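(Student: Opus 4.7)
I would prove the inequality by a pointwise argument, combining the three bounds for $w$ established in Lemma~\ref{Lemma-w} with the binary nature of $\rho_{u_\mathcal H}$ (which takes only the values $h$ and $H$). The first move is to decompose
$$
\int_B \rho_{u_\mathcal H}\,w\,(w-u_\mathcal H)\,dx = \int_{B\cap\mathcal H} \rho_{u_\mathcal H}\,w\,(w-u_\mathcal H)\,dx + \int_{B\cap\mathcal H^c} \rho_{u_\mathcal H}\,w\,(w-u_\mathcal H)\,dx
$$
and apply the change of variables $x=\bar y$ in the second summand. A quick computation (using that $\partial\mathcal H$ is an affine hyperplane with $0\in\mathrm{int}(\mathcal H)$) shows $|\bar x|<|x|$ for $x\in\mathcal H^c\setminus\partial\mathcal H$, so reflection bijects $B\cap\mathcal H^c$ onto $B_1:=\{y\in B\cap\mathcal H:\bar y\in B\}$. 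Setting $B_2:=(B\cap\mathcal H)\setminus B_1$, on which $\bar y\notin B$ forces $w(\bar y)=u_\mathcal H(\bar y)=u(\bar y)=0$ and $u_\mathcal H(y)=u(y)$, the entire integral reduces to
$$
\int_{B_1}\!\!\Phi(y)\,dy+\int_{B_2}\!\!\rho_{u_\mathcal H}(y)w(y)[w(y)-u(y)]\,dy,
$$
with $\Phi(y):=\rho_{u_\mathcal H}(y)w(y)[w(y)-u_\mathcal H(y)]+\rho_{u_\mathcal H}(\bar y)w(\bar y)[w(\bar y)-u_\mathcal H(\bar y)]$.

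\textbf{Pointwise sign analysis.} On $B_2$, non-negativity of the integrand is immediate from Lemma~\ref{Lemma-w}(ii). On $B_1$, introduce the shorthand $a=w(y)$, $b=w(\bar y)$, $\alpha=u_\mathcal H(y)$, $\beta=u_\mathcal H(\bar y)$, $\rho_1=\rho_{u_\mathcal H}(y)$, $\rho_2=\rho_{u_\mathcal H}(\bar y)\in\{h,H\}$. Lemma~\ref{Lemma-w} delivers $a\ge b\ge 0$, $a\ge\alpha\ge 0$, and $a+b\ge\alpha+\beta$, while the polarization on $\mathcal H$ forces $\alpha\ge\beta$; this rules out $(\rho_1,\rho_2)=(h,H)$. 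In the symmetric cases $(h,h)$ and $(H,H)$, splitting on whether $b\ge\beta$ and using $a\ge b$ in the subcase $b<\beta$ yields $\Phi\ge 0$. The asymmetric case $(H,h)$ with $b<\beta$ is handled by the identity
$$
\Phi = (Ha-hb)(\beta-b) + Ha\bigl[(a-\alpha)-(\beta-b)\bigr],
$$
in which $Ha-hb\ge(H-h)b\ge 0$ (from $a\ge b$, $H\ge h$) and $(a-\alpha)-(\beta-b)\ge 0$ (from Lemma~\ref{Lemma-w}(iii)). Integrating both contributions produces the desired inequality.

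\textbf{Equality case and main obstacle.} I would establish the second assertion by contrapositive. If $\rho_u u\not\equiv[\rho_u u]_\mathcal H$, the last part of Lemma~\ref{Lemma-w} gives the strict inequality $(a-\alpha)-(\beta-b)>0$ everywhere on $\mathrm{int}(B\cap\mathcal H)$, a set of positive measure. Running the case analysis again with this strict gap, one checks that $\Phi>0$ throughout $B_1\cap\mathrm{int}(B\cap\mathcal H)$, directly from the second summand of the displayed identity in the asymmetric case, and from the analogous strict version of the elementary algebra in the symmetric cases, while on $B_2\cap\mathrm{int}(B\cap\mathcal H)$ the strict form of (iii) reads $w(y)>u(y)$, making the integrand strictly positive. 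Hence $\int_B\rho_{u_\mathcal H}w(w-u_\mathcal H)\,dx>0$, contradicting equality. The main obstacle is precisely this strictness verification in the asymmetric case, where the two summands of $\Phi$ carry opposite signs and strictness in Lemma~\ref{Lemma-w}(iii) must be transferred cleanly to strictness of $\Phi$; the edge case $h=0$, where $\rho_u\equiv H$ a.e. by Remark~\ref{case_h=0}, collapses onto the symmetric $(H,H)$ branch and is handled by the corresponding argument in \cite{FGW}.
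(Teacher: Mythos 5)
Your proposal is correct in substance, but it takes a genuinely different route from the paper's, and the comparison is worth making. The paper writes the integral (as you do) as a fold over $\mathcal H$, producing the integrand
$\rho_{u_\mathcal H}(x)w(x)[w(x)-u_\mathcal H(x)]+\rho_{u_\mathcal H}(\bar x)w(\bar x)[w(\bar x)-u_\mathcal H(\bar x)]$,
but then, instead of a case split, applies a single algebraic minorization: setting $A_i$ for the $\rho w$ factors and $d_i$ for the $w-u_\mathcal H$ factors, one has
$A_1d_1+A_2d_2 = d_1(A_1-A_2) + A_2(d_1+d_2)$.
The second term is nonnegative by Lemma~\ref{Lemma-w}(iii) (exactly your $(a+b)\ge(\alpha+\beta)$), and the first term is a product of two nonnegative factors, since $d_1\ge 0$ by (ii), $w(x)\ge w(\bar x)$ by (i), and $\rho_{u_\mathcal H}(x)\ge\rho_{u_\mathcal H}(\bar x)$ follows from $u_\mathcal H(x)\ge u_\mathcal H(\bar x)$ on $\mathcal H$. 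This uses the binary nature of $\rho_{u_\mathcal H}$ only through the monotonicity $\rho_{u_\mathcal H}(x)\ge\rho_{u_\mathcal H}(\bar x)$, rather than through a full enumeration of the four sign configurations. The equality case also drops out cleanly: equality forces $A_2(d_1+d_2)=0$ a.e., and the two alternatives are exactly $B\subset\mathcal H$ (so $w(\bar x)=0$, giving $\rho_u u\equiv[\rho_u u]_\mathcal H$ trivially) or $d_1+d_2\equiv 0$ on $\mathrm{int}(B\cap\mathcal H)$ (contradicting the strict form of (iii) in Lemma~\ref{Lemma-w}).

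Your approach, by contrast, replaces this one-line factorization with a case analysis over $(\rho_1,\rho_2)\in\{h,H\}^2$ and $\operatorname{sign}(b-\beta)$. The analysis you give is correct as far as it goes, and your auxiliary facts ($a\ge b$, $\alpha\ge\beta\Rightarrow\rho_1\ge\rho_2$, the reflection-bijection of $B\cap\mathcal H^c$ onto $B_1$) are all sound. Two remarks: first, you omit the case $(\rho_1,\rho_2)=(H,h)$ with $b\ge\beta$, which is trivially nonnegative (both summands $Ha(a-\alpha)$ and $hb(b-\beta)$ are $\ge 0$) but should be mentioned for completeness; second, in the strictness step the same missing case needs a small argument (if $a=\alpha$ and $b=\beta$ then $a+b=\alpha+\beta$, contradicting the strict form of (iii); if $a>\alpha$ the first summand is $>0$; and $b=0$ is excluded on $B_1\cap B$). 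What the paper's factorization buys is that all these subcases collapse into the single observation $A_2(d_1+d_2)\ge 0$; what your route buys is a more transparent pointwise picture of where the inequality could become tight, at the cost of heavier bookkeeping. If you reorganize your $\Phi$ as $d_1(\rho_1 a-\rho_2 b)+\rho_2 b(d_1+d_2)$ rather than enumerating cases, you will recover the paper's argument.
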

\begin{proof}
By Lemma \ref{Lemma-w} we get 
\begin{equation}\label{miracle}
\begin{aligned}
\int_B[\rho_{u_\mathcal H} w^2-\rho_{u_\mathcal H} u_\mathcal H w]dx&=\int_\mathcal H\{\rho_{u_\mathcal H}(x)w(x)[w(x)-u_\mathcal H(x)]\\
&\qquad\quad+\rho_{u_\mathcal H}(\bar x)w(\bar x)[w(\bar x)-u_\mathcal H(\bar x)]\}dx\\
&\ge\int_\mathcal H[w(x)-u_\mathcal H(x)]\cdot[\rho_{u_\mathcal H}(x)w(x)-\rho_{u_\mathcal H}(\bar x)w(\bar x)]dx\ge 0.
\end{aligned}
\end{equation}
We stress that in the last inequality we have also used the fact that, 
if $x\in\mathcal H$ then $\bar x\not\in\mathcal H$ and in particular
$$u_\mathcal H(x)=\max\{u(x),u(\bar x)\}\ge\min\{u(x),u(\bar x)\}=u_\mathcal H(\bar x).$$
Consequently, if $u_\mathcal H(x)\le\sqrt{t}$, then also $u_\mathcal H(\bar x)\le\sqrt{t}$, and so 
$$\rho_{u_\mathcal H}(x)\ge\rho_{u_\mathcal H}(\bar x)\quad\mbox{for every }x\in\mathcal H.$$
We can now prove the last part of the statement as in 
\cite[Lemma 5]{FGW}. If equality holds in \eqref{inequality=}, 
then also in \eqref{miracle} we have equality. This is 
only possible in two situations: $w(x)-u_\mathcal H(x)=u_\mathcal H(\bar x)-w(\bar x)$ 
for every $x\in\mathrm{int}(\mathcal H\cap B)$, 
or $\rho_{u_\mathcal H}(\bar x)w(\bar x)=0$ for 
all $x\in\mathrm{int}(\mathcal H\cap B)$. In the first case, 
we conclude by Lemma \ref{Lemma-w} that $\rho_u u \equiv[\rho_u u]_{\mathcal H}$. 
If the second case occurs, then since both $w$ and 
$\rho_{u_\mathcal H}$ are positive in $B$, we conclude 
that $B\subset\mathcal H$ and so again $\rho_u u \equiv[\rho_u u]_{\mathcal H}$, 
being $u\equiv 0$ outside $B$.
\end{proof}

We are now ready to end the proof of Theorem \ref{Theo-ball} for Dirichlet boundary conditions. 

\begin{proof}[$\bullet$ Proof of Theorem \ref{Theo-ball} for Dirichlet]
Let $(u,\rho_u)$ be a CP-optimal pair, with $u>0$ in $B$, 
and let $\mathcal H\subset \mathbb R^n$ be a half-space 
such that $0\in\mathrm{int}(\mathcal H)$. Then, by the definition 
\eqref{def-w} of $w$ 
we know that $w$ solves the problem 
$$
\left\{\begin{array}{rl}
\Delta^2 v=\Theta\rho_{u_\mathcal H} u_\mathcal H,&\quad\mbox{in }B,\smallskip\\
v=\frac{\partial v}{\partial \nu}=0,&\quad\mbox{on }\partial B.
\end{array}\right.
$$
Thus, by Lemma \ref{Lemma-=} we get
\begin{equation}\label{estimate}
\|\Delta w\|_{L^2(B)}^2=\Theta\int_B w\,\rho_{u_\mathcal H} u_\mathcal H\le \Theta\int_B \rho_{u_\mathcal H} w^2, 
\end{equation}
and so 
$$
\frac{\|\Delta w\|_{L^2(B)}^2}{\int_B \rho_{u_\mathcal H} w^2}\le \Theta.
$$
By Proposition \ref{equimisurabilita} also $\rho_{u_\mathcal{H}}$ 
is an admissible density (i.e. $\int_B\rho_{u_\mathcal H}=M$), 
then by the minimality of $\Theta$ equality must hold in \eqref{estimate}, 
and so $u=u_\mathcal H$ by Lemmas \ref{Lemma-=} and \ref{Lemma-uuh}. 
Therefore, by the arbitrariness of $\mathcal H$ and by Lemma \ref{Lemma-characterization}, we get that $u$ is a radial, 
radially non-increasing function and by its shape, $S$ is radial 
and $S^c$ is convex. In view of Proposition \ref{levelset}
and since $S$ is defined up to a set of measure zero, $S$ is the unique {\it open} shell region 
of measure $A$, $S=\{x\,:\, r(A)<|x|< 1\}$. In particular, $S$ and $S^c$ are of class $C^\infty$. 
In conclusion, for $\Omega = B$ there is a unique CP-optimal pair $(u,\rho)$.  It remains to prove the strict monotonicity of the radial profile of $u$. To this aim, we observe that, thanks to the regularity of the boundaries of $S$ and $S^c$ and the fact that $\rho$ is constant in both $S$ and $S^c$, $u\big|_S$ and $u\big|_{S^c}$ are of class $C^4$ in $\mathrm{int}(S)$ and $\mathrm{int}(S^c)$ respectively, cf. \cite[Theorem 2.20]{GGS}. Now, we have just proved that $u$ is radially non-increasing. Suppose by contradiction that there exists an open subset $U$ of $B$ where $u$ is constant, consequently either $U\subset S=\{u<\sqrt{t}\}$ or $U\subset S^c=\{u>\sqrt{t}\}$. Thus, $\Delta^2u=0$ in $U$, which contradicts the positivity of $u$, being $\Delta^2 u=\Theta\rho u>0$ in all of $B$. Here we are tacitly assuming $h>0$, the case $h=0$ being even simpler.
\end{proof}

We consider now the case of {\it Navier boundary conditions}. Here we can write our fourth-order problem \eqref{Nav} as the second-order system \eqref{NSys}, that is 
$$
\left\{ \begin{array}{rl}
          -\Delta u = v, & \quad \textrm{in $B$},\\
					-\Delta v = \Theta \, \rho \, u, & \quad \textrm{in $B$},\\
					u = v = 0, & \quad  \textrm{on $\partial B$}.
				\end{array}\right.					
$$

\begin{proposition}\label{radialNav}
Let $(u,v)$ be a weak solution of \eqref{Nav} such that $u>0$ and $v>0$ in $B$. Then $u$ and $v$ are radial and radially decreasing in $B$. 
\end{proposition}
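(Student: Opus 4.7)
The plan is to adapt Troy's moving plane method for cooperative elliptic systems from \cite{Troy81} to the non-smooth setting of \eqref{NSys}. Fix a unit vector $e \in \mathbb{R}^n$, $|e|=1$, and for $\lambda \in (-1,0]$ set
\[
T_\lambda := \{x \in \mathbb{R}^n : x\cdot e = \lambda\}, \quad \Sigma_\lambda := \{x\in B : x\cdot e < \lambda\},
\]
and let $x^\lambda := x - 2((x\cdot e)-\lambda)e$ denote the reflection of $x$ across $T_\lambda$. Introduce the reflected differences
\[
U_\lambda(x) := u(x^\lambda) - u(x), \qquad V_\lambda(x) := v(x^\lambda) - v(x), \qquad x \in \Sigma_\lambda.
\]
By Theorem \ref{Theo1}(a) applied to the fourth-order formulation, $u \in C^{3,\gamma}(\overline{B})$ and $v=-\Delta u \in C^{1,\gamma}(\overline{B})$; hence $(U_\lambda,V_\lambda)$ satisfies the pointwise system
\[
-\Delta U_\lambda = V_\lambda,\qquad -\Delta V_\lambda = \Theta\bigl[\rho(x^\lambda)u(x^\lambda) - \rho(x)u(x)\bigr] \quad \text{in } \Sigma_\lambda,
\]
together with $U_\lambda = V_\lambda = 0$ on $T_\lambda\cap \overline{B}$ and $U_\lambda, V_\lambda \geq 0$ on $\partial B \cap \overline{\Sigma_\lambda}$.

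The crucial ingredient that bypasses the lack of regularity of $\rho$ is the monotonicity of the map $s \mapsto \rho\,s$ built from the explicit form $\rho = h\chi_{\{u\le\sqrt{t}\}}+H\chi_{\{u>\sqrt{t}\}}$ established in Theorem \ref{Struct}. A short case analysis on whether $u(x^\lambda)$ and $u(x)$ lie above or below $\sqrt{t}$ yields the implication
\[
u(x^\lambda)\ge u(x) \ \Longrightarrow \ \rho(x^\lambda)\, u(x^\lambda) \ge \rho(x)\,u(x),
\]
the only non-trivial case $u(x^\lambda)>\sqrt{t}\ge u(x)$ being handled by $Hu(x^\lambda) > H\sqrt{t}\ge h\sqrt{t}\ge hu(x)$. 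Thus, on the set where $U_\lambda \ge 0$ the right-hand side of the equation for $V_\lambda$ is non-negative, and the system becomes cooperative in the sense of \cite{Troy81}.

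I would then run the moving plane iteration. For $\lambda$ close to $-1$, the cap $\Sigma_\lambda$ is thin and the maximum principle for narrow domains, applied to the cooperative system, forces $U_\lambda, V_\lambda \ge 0$ in $\Sigma_\lambda$. Setting
\[
\lambda^\ast := \sup\{\lambda \in (-1,0] : U_{\lambda'}\ge 0 \text{ and } V_{\lambda'}\ge 0 \text{ in } \Sigma_{\lambda'} \text{ for all } \lambda' \in (-1,\lambda]\},
\]
assume by contradiction $\lambda^\ast < 0$. By continuity $U_{\lambda^\ast}, V_{\lambda^\ast} \ge 0$, and the strong maximum principle applied separately to each scalar equation yields either $U_{\lambda^\ast}\equiv 0 \equiv V_{\lambda^\ast}$, which is ruled out by the positivity of $u$, $v$ in $B$ together with the boundary conditions on the portion of $\partial B$ contained in $\partial \Sigma_{\lambda^\ast}$, or $U_{\lambda^\ast}, V_{\lambda^\ast} > 0$ in $\Sigma_{\lambda^\ast}$. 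In the latter case Hopf's lemma produces $\varepsilon>0$ such that $U_\lambda, V_\lambda\ge 0$ for $\lambda\in(\lambda^\ast,\lambda^\ast+\varepsilon)$, contradicting the definition of $\lambda^\ast$. Hence $\lambda^\ast=0$, and repeating the argument with $-e$ in place of $e$ yields $U_0\equiv V_0\equiv 0$, i.e.\ symmetry of $u$ and $v$ with respect to $e^\perp$. The arbitrariness of $e$ gives radial symmetry, while the strict inequality $U_\lambda,V_\lambda > 0$ in $\Sigma_\lambda$ for every $\lambda\in(-1,0)$, combined with Hopf's lemma at $T_\lambda$, delivers the strict radial monotonicity.

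The main obstacle is precisely the non-smoothness of $\rho$: Troy's framework in \cite{Troy81} requires $C^1$ nonlinearities and $C^2$ solutions of \eqref{Troy}, while here $\rho$ is a sum of characteristic functions, so classical differentiation arguments across hyperplanes are not available. This is circumvented by the monotonicity of $s\mapsto \rho\,s$ above, which plays the role of the $C^1$ assumption on the nonlinearities, combined with the regularity $u\in W^{4,q}\cap C^{3,\gamma}$ granted by Theorem \ref{Theo1}(a), which is enough for the narrow-domain maximum principle, the strong maximum principle and Hopf's lemma to apply to each scalar equation of the system.
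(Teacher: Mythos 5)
Your overall architecture is the right one and essentially matches the paper's: both you and the authors run the moving plane method of Troy \cite{Troy81} for the cooperative system \eqref{NSys}, and both exploit the fact that $s\mapsto g(s):=h\,s\,\chi_{\{s\le\sqrt{t}\}}+H\,s\,\chi_{\{s>\sqrt{t}\}}$ is non-decreasing to substitute for the $C^1$ hypothesis on the nonlinearity that Troy needs. Your case-by-case verification of the implication $u(x^\lambda)\ge u(x)\Rightarrow \rho(x^\lambda)u(x^\lambda)\ge\rho(x)u(x)$ is correct, and the continuation/Hopf argument once $U_\lambda,V_\lambda\ge 0$ is established is sound and in line with the paper, which routes through the versions of the strong maximum principle and Hopf lemma for $C^1$ solutions from \cite{DP13,PS}.

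The one genuine gap is the way you start the moving plane. You invoke ``the maximum principle for narrow domains, applied to the cooperative system'' for $\lambda$ close to $-1$, but the standard narrow-domain maximum principle (even the cooperative-system version) requires that the difference $g(u(x^\lambda))-g(u(x))$ admit a linearization with a coefficient in $L^\infty$ (or at least in a suitable $L^p$). Here $g$ has a jump of size $(H-h)\sqrt{t}$ at $\sqrt{t}$: near a point where $u(x)\approx u(x^\lambda)\approx\sqrt{t}$ one can have $|g(u(x^\lambda))-g(u(x))|$ bounded below by a fixed constant while $U_\lambda(x)$ is arbitrarily small, so there is no bound of the form $|g(u(x^\lambda))-g(u(x))|\le C\,|U_\lambda(x)|$. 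Monotonicity alone gives the right \emph{sign} once $U_\lambda\ge 0$, but is not enough to run the narrow-domain contradiction argument from scratch. There are two easy fixes. One is to observe that for $\lambda$ close to $-1$ both $\Sigma_\lambda$ and its reflection are contained in a fixed neighborhood of $\partial B$ where $u<\sqrt{t}$ (continuity and $u|_{\partial B}=0$), so $\rho\equiv h$ there and the system for $(U_\lambda,V_\lambda)$ is linear with constant coefficients; then the narrow-domain argument is legitimate. The other — and this is what the paper does — is to avoid the narrow-domain principle altogether and initialize the plane from the boundary Hopf lemma applied directly to $u$ and $v$ (Troy's Lemma 4.1, adapted in Step 1 of the paper), which needs no Lipschitz control on $\rho\,u$ at all because near $\partial B$ the source terms $v$ and $\Theta\rho u$ are positive and smooth enough. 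With either repair your proof goes through and coincides with the paper's in substance; as written, though, the initialization step is not justified.
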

\begin{proof}
For the proof of this result we refer to the ones of \cite[Theorem 1 and Lemmas~4.1-4.3]{Troy81} for system \eqref{Troy}. We just skecth the proof below, and we highlight how we can overcome the lack of the regularity assumptions required in \cite{Troy81} to the solution $(u_i)_{i=1}^m$ (i.e., $u_i\in C^2(\overline{B})$) and the nonlinearity $(f_i)_{i=1}^m$ (i.e., $f_i\in C^1$) of \eqref{Troy}, thanks to the special form of our system. 

As in \cite{Troy81}, we arbitrarily choose the $x_1$ axis and denote by $T_\xi$ the hyperplane $\mathrm{e}_1\cdot x=\xi$.
Since $B$ is bounded, for sufficiently large $\xi>0$, the plane $T_{\xi}$ does not intersect $\overline B$. We decrease $\xi$ (i.e., the plane $T_\xi$  moves continuously toward $B$,
preserving the normal) until $\xi_0$, that is the smallest value of $\xi$ for which $T_{\xi}$ begins to intersect $B$. From $\xi=\xi_0$ to $\xi=0$, the plane $T_\xi$, cuts off from $B$ an open set $\Sigma(\xi)$, which is the part of $B$ that does not contain the origin. Let $\Sigma'(\xi)$ denote the reflection of $\Sigma(\xi)$ with respect to the plane
$T_\xi$. For every $x\in \Sigma(\xi)$, we denote by $x^\xi$ the reflection of $x$ with respect to $T_\xi$.

The proof can be split into the following three steps. \smallskip

\underline{\it Step 1.}  {\it Let $x_0\in\partial B$ be such that $\nu^{(1)}(x_0)>0$. Then there exists $\delta>0$ such that $\frac{\partial u}{\partial x_1}<0$ and $\frac{\partial v}{\partial x_1}<0$ in $B\cap B(x_0,\delta)$.} 

This can be proved as in \cite[Lemma 4.1]{Troy81}. We observe that in our case $f_1(v)=v$ and $f_2(u)=\Theta\rho_u u$, hence $f_i(0)=0$ for $i=1,\,2$. This allows to avoid the case (ii) in the proof of \cite[Lemma 4.1]{Troy81} which would require the $C^2$-regularity of $v=\Delta u$.

Now, take $\xi\in (0,\xi_0)$ sufficiently close to $\xi_0$.
Since $\nu^{(1)}(x)>0$ for every $x\in\partial B\cap \partial(\Sigma(\xi))$, as a consequence of Step~1., it follows that for every $x\in\Sigma(\xi)$
\begin{equation}\label{<<}
\frac{\partial u}{\partial x_1}(x)<0, \quad \frac{\partial v}{\partial x_1}(x)<0, \quad u(x)<u(x^\xi),\quad v(x)<v(x^\xi).
\end{equation}

As in the proof of \cite[Lemma 4.3]{Troy81}, decrease $\xi$ below $\xi_0$ until a critical value $\bar\xi\ge0$ beyond which \eqref{<<} does not hold any more for $u$ or $v$. Then, for every $x\in\Sigma(\bar\xi)$
\begin{equation}\label{<=}
\frac{\partial u}{\partial x_1}(x)\le 0, \quad \frac{\partial v}{\partial x_1}(x)\le 0, \quad u(x)\le u(x^{\bar\xi}),\quad v(x)\le v(x^{\bar\xi}).
\end{equation}

\underline{\it Step 2.}  {\it Let $\xi\in(0,\xi_0)$, then 
$$
\begin{aligned}
&u(x)<u(x^{\xi}),\quad v(x)<v(x^\xi)\quad\mbox{for every }x\in\Sigma(\xi),\\  
&\frac{\partial u}{\partial x_1}(x)<0,\quad \frac{\partial v}{\partial x_1}(x)<0\quad\mbox{for every }x\in B\cap T_\xi.
\end{aligned}
$$}

This can be proved by using \eqref{<<} and \eqref{<=} as in \cite[Lemma 4.2]{Troy81}. We observe that the special form of $f_i$, $i=1,\,2$ (i.e., the fact that $f_1$ does not depend on $u$ and $f_2$ does not depend on $v$) allows us to avoid the use of the Mean Value Theorem in this proof.  Furthermore, the proof of \cite[Lemma 4.2]{Troy81} relies on the Hopf Lemma and the Strong Maximum Principle for $C^2$-solutions of second-order elliptic equations in domains with corners. In our case we can apply the Strong Maximum Principle 
and the Hopf Lemma in e.g. \cite[Theorem~2.2]{DP13} or \cite[Theorem~2.5.1, Theorem~2.7.1 and comments on p. 40]{PS},
which require only $C^1(\overline{B})$ regularity of the solution $(u,v)$.

As a consequence of Step 1. and Step 2., it is possible to prove that the value $\bar\xi\ge 0$ is indeed equal to $0$. This can be done by following the argument by contradiction proposed in \cite[Lemma 4.3]{Troy81}-Case (i). Here again the use of the Mean Value Theorem can be avoided thanks to the special form of the $f_i$'s in our problem. 
  
Furthermore, by Step 2., we get
$$
\frac{\partial u}{\partial x_1}(x)>0,\quad \frac{\partial v}{\partial x_1}(x)>0\quad\mbox{for every }x\in B\cap \{x\in\mathbb R^n\,:\,x_1<0\}
$$
and by continuity of partial derivatives of $u$ and $v$, 
\begin{equation}\label{==}
\frac{\partial u}{\partial x_1}(x)=0,\quad \frac{\partial v}{\partial x_1}(x)=0\quad\mbox{for every }x\in B\cap T_0.
\end{equation}

\underline{\it Step 3.}  {\it The functions $u$ and $v$ are symmetric with respect to the plane $T_0$.}

This can be proved as in \cite[Lemma 4.2]{Troy81}, by using \eqref{==}.
\smallskip 

The conclusion of the proof then follows by the arbitrariness of the $x_1$ axis. 
\end{proof}

\begin{proof}[$\bullet$ Proof of Theorem \ref{Theo-ball} for Navier]
By Proposition \ref{positivity} $u>0$, this together with the strong maximum principle implies that $v>0$. Therefore, we can apply Proposition \ref{radialNav}. The conclusion of Theorem \ref{Theo-ball} for Navier, concerning the properties of $S$, can be repeated verbatim as in the case for Dirichlet boundary conditions. 
\end{proof}
\begin{remark}
Let us denote by $\Theta_N$ and $\Theta_D$ the values of \eqref{CP} with Navier and Dirichlet boundary conditions, respectively. Since $H^2_0(\Omega)\subset H^2(\Omega)\cap H^1_0(\Omega)$, $\Theta_N\le \Theta_D$.   We can follow the argument in \cite{FGW} to prove that actually the strict inequality holds, namely
$$\Theta_N< \Theta_D.$$ 
Indeed, let $(u,\rho)\in H^2(\Omega)\cap H^1_0(\Omega)\times\mathrm{P}$ be a CP-optimal pair for Navier. Let us
assume by contradiction that $u$ does not have a sign in $\Omega$. Consider now the problem
\begin{equation}\label{COMP}
\left\{ \begin{array}{rl}
          -\Delta v = |\Delta u|, & \quad \textrm{in }\Omega,\\
					v=0, &\quad \textrm{in } \partial \Omega.
					\end{array}\right.			
\end{equation}
By regularity theory, a solution $v$ of \eqref{COMP} is such that $v \in H^2(\Omega)\cap H^1_0(\Omega)$,
and therefore is an admissible candidate for the problem \eqref{CP} with
Navier boundary conditions. On the other hand, we can argue as in the proof of Proposition \ref{positivity} to get by the maximum
principle that $v > |u|$ in $\Omega$. Hence, being $\rho > 0$ a.e. $\Omega$, we have
$$
\dfrac{\int_\Omega(\Delta v)^2}{\int_\Omega\rho v^2} < \dfrac{\int_\Omega(\Delta u)^2}{\int_\Omega\rho u^2}=\Theta_N,$$
which contradicts the minimality of $\Theta_N$. Thus,
$u$ has sign, and so we can take $u>0$ in $\Omega$. This, combined with $-\Delta u \geq 0$ (by maximum principle, being $\Delta^2 u=\Theta_N\rho u>0$ in $\Omega$ and $\Delta u=0$ on $\partial \Omega$),
allows to employ the Hopf Boundary Point Lemma, which gives 
$$
\frac{\partial u}{\partial \nu}< 0\quad\mbox{ on }\partial \Omega.
$$
In order to conclude, it is enough to notice that if $(u,\rho)$ is a CP-optimal pair with Dirichlet boundary conditions, then $\tfrac{\partial u}{\partial \nu}= 0$ on $\partial \Omega$, hence, it {\it cannot} be a CP-optimal pair with Navier boundary conditions as well. 
\end{remark}

\section{A nonlinear eigenvalue minimization problem in conformal geometry}\label{confgeo}
In \cite{Chanillo13}, Chanillo showed the close relation between
a nonlinear eigenvalue minimization problem for the Laplace-Beltrami
operator $-\Delta_g$ and the composite membrane problem. 
More precisely,
let $(\Omega,g_0)$ be a $2$-dimensional bounded Riemannian manifold with
smooth boundary $\partial \Omega$ and consider the conformal class of the metric $g_0$, 
\begin{equation}\label{g0}
[g_0] := \left\{ g \, \textrm{Riemannian metric on } \Omega:\,\exists \,f\mbox{ such that } g = e^{2f} g_0 \right\}.
\end{equation}
Consider another class of Riemannian metrics which is strictly contained in $[g_0]$, 
\begin{equation}\label{ClasseC}
\Ci := \left\{ g \in [g_0]: g \textrm{ satisfies } \eqref{LimC} \, \textrm{and } \eqref{VolC}\right\},
\end{equation}
where
\begin{equation}\label{LimC}
\textrm{there exists a positive constant } A>0 \, \textrm{such that } \|f\|_{L^{\infty}(\Omega)} \leq A;
\end{equation}
and
\begin{equation}\label{VolC}
\textrm{there exists a positive constant } M>0 \, \textrm{such that } \int_{\Omega}dV_g = \int_{\Omega}e^{2f}dV_{g_0} = M.
\end{equation}
The problem is now to minimize the first eigenvalue
of the Laplace-Beltrami operator $-\Delta_{g}$ with Dirichlet boundary conditions, subject to the
constraints provided by the class $\Ci$. In other words,
find a couple $(u,g)$ which realizes 
\begin{equation}\label{ConfMinC}
\inf_{g \in \Ci} \inf_{u \in H^{1}_{0}(\Omega) \setminus \{0\}} \dfrac{\int_{\Omega} \Delta_{g} u \, u \, dV_g}{\int_{\Omega}|u|^2 \, dV_g}.
\end{equation}

In the same
paper, it is raised the question whether similar results can be 
obtained for higher order conformally invariant operators, with special
attention devoted to the {\it Paneitz operator} $P_{n/2}^g$. 
The problem can be stated as follows.
Let $(\Omega,g_0)$ be a $4$-dimensional bounded Riemannian manifold with
smooth boundary $\partial \Omega$. 
Inside the conformal class $[g_0]$, we want to consider the smaller class of Riemannian metrics,
\begin{equation}\label{ClassC}
\Ci := \left\{ g \in [g_0]: g \textrm{ satisfies } \eqref{LimC} \, \textrm{and } \eqref{Vol}\right\},
\end{equation}
where now
\begin{equation}\label{Vol}
\textrm{there exists a positive constant } M>0 \, \textrm{such that } \int_{\Omega}dV_g = \int_{\Omega}e^{4f}dV_{g_0} = M.
\end{equation}
The problem is now to minimize the first eigenvalue
of $P_2^g$ with Dirichlet boundary conditions, subject to the
constraints provided by the class $\Ci$. In other words,
to find a pair $(u,g)$ which realizes 
\begin{equation}\label{ConfMin}
\inf_{g \in \Ci} \inf_{u \in H^{2}_{0}(\Omega) \setminus \{0\}} \dfrac{\int_{\Omega} P^g_{2}u \, u \, dV_g}{\int_{\Omega}|u|^2 \, dV_g}.
\end{equation} 
We stress that the {\it Paneitz operator} $P^g_2$ has a
leading term given by the fourth order differential 
operator $(-\Delta_g)^2$. In particular, 
if we are in the flat case (i.e. $g$ is the standard 
Euclidean flat metric $g_{E}$),
$$P^{g_E}_2 = \Delta^2.$$
In \cite[Proposition 4]{Chanillo13} it has 
been proved that the problem
\eqref{ConfMin} is equivalent to 
\begin{equation}\label{ConfMin2}
\inf_{\rho \in \mathrm{P}_{g_0}} \inf_{u \in H^{2}_{0}(\Omega) \setminus \{0\}} \dfrac{\int_{\Omega} P^g_{2}u \, u \, dV_{g_0}}{\int_{\Omega}|u|^2 \,\rho dV_{g_0}},
\end{equation}
where, for fixed $0<h<H$, $M>0$, we have defined
$$\mathrm{P}_{g_0} := \left\{ \rho:\Omega\to\mathbb R^+ : h \leq \rho \leq H, \, \int_{\Omega} \rho \, dV_{g_0} = M\right\}.$$
For the sake of completeness, we recall here 
a few facts concerning the conformal
change $g = e^{2f} g_0$. The volume forms are related by
$$dV_g = e^{nf} dV_{g_0},$$
\noindent where $n$ is the dimension of 
the Riemannian manifold $\Omega$, namely $4$ in our case.
The Paneitz operator related to $g$ is given by
$$P_{2}^{g}(u) = e^{-4f} P^{g_0}_{2}(u) \quad \textrm{for every } u \in C^{\infty}(\Omega).$$
The first problem is to understand what happens in the flat case,
i.e. for $g_0 = g_E$, where $g_E$ denotes the standard Euclidean metric.
We denote by $dx$ the volume form associated with $g_E$.
We can notice that \eqref{ConfMin2} can be now written as
\begin{equation*}
\inf_{\rho \in \mathrm{P}_{g_0}} \inf_{u \in H^{2}_{0}(\Omega) \setminus \{0\}} \dfrac{\int_{\Omega} (\Delta u)^2 \, dx}{\int_{\Omega}\rho u^2 dx},
\end{equation*}
which coincides with \eqref{CP}.
Therefore we have the following
\begin{theorem}
There exists a pair $(u_{\infty}, \rho_{\infty} g_{0})$ 
which realizes \eqref{ConfMin}. In particular, 
$$\rho_{\infty} = e^{f_{\infty}} = h \, \chi_S + H \, \chi_{S^c},$$
\noindent where 
$$S = \{ u_{\infty}^2 \leq t\} \quad \textrm{for a certain } t > 0.$$
Furthermore, 
$$u_{\infty} \in W^{4,q}(\Omega) \cap C^{3,\gamma}(\overline{\Omega}) \quad \textrm{for every } q\ge1\mbox{ and }\gamma \in (0,1).$$
\end{theorem}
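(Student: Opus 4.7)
The plan is to reduce the conformal minimization to the composite plate problem \eqref{CP} and then quote the theorems already proved. By \cite[Proposition 4]{Chanillo13}, problem \eqref{ConfMin} is equivalent to \eqref{ConfMin2}. Specializing to the flat background $g_0 = g_E$, one has $P_2^{g_E} = \Delta^2$ and $dV_{g_0} = dx$, so \eqref{ConfMin2} becomes precisely \eqref{CP} with Dirichlet boundary conditions, as already noted in the excerpt. The constraint $\|f\|_{L^{\infty}(\Omega)} \le A$ translates into strictly positive pointwise bounds $0 < h \le \rho \le H$ (with $h = e^{-4A}$ and $H = e^{4A}$ in dimension $4$), so the admissible class $\mathrm{P}_{g_0}$ is contained in the class $\mathrm{P}$ of \eqref{Rho}.

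Once the problem is identified with \eqref{CP}, I apply Theorem \ref{Theo1} and Theorem \ref{Struct}(a): the former produces a CP-optimal pair $(u_{\infty},\rho_{\infty})$ and provides the regularity $u_{\infty} \in W^{4,q}(\Omega) \cap C^{3,\gamma}(\overline{\Omega})$ for every $q \ge 1$ and $\gamma \in (0,1)$, while the latter gives the explicit two-value form $\rho_{\infty} = h\chi_{S} + H\chi_{S^{c}}$ with $S = \{u_{\infty}^{2} \le t\}$ for some $t \ge 0$. The conformal metric is then recovered by defining $f_{\infty} := \tfrac{1}{4}\log \rho_{\infty} \in L^{\infty}(\Omega)$, so that $\rho_{\infty}g_{0} \in \Ci$ and $(u_{\infty},\rho_{\infty}g_{0})$ realizes \eqref{ConfMin}.

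The only point that requires a small argument beyond a citation is the strict positivity $t > 0$. In the conformal setting the non-degeneracy conditions $h < H$ and $h|\Omega| < M < H|\Omega|$ are automatic, so by \eqref{A} in Theorem \ref{Struct} the equivalent parameter satisfies $A \in (0, |\Omega|)$. If one had $t = 0$, then $S \subset \{u_{\infty} = 0\}$, and the analyticity argument based on \cite[Proposition 0]{Mityagin} used in the proof of Theorem \ref{Theo1}(b) forces $|\{u_{\infty} = 0\}| = 0$, hence $A = 0$, a contradiction.

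I do not expect a serious obstacle: the entire statement is essentially a corollary of the machinery already developed in Sections \ref{Sec3} and \ref{Relation}, once the reduction to the flat problem via \cite[Proposition 4]{Chanillo13} is invoked. The one delicate step that deserves being spelled out is the exclusion of the degenerate case $t = 0$, which is handled by the analyticity argument above.
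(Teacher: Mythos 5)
Your proposal is correct and takes essentially the same route as the paper, which treats this theorem as a direct corollary of the earlier machinery once \eqref{ConfMin} is reduced to \eqref{CP} via \cite[Proposition 4]{Chanillo13} and the flat-case identification $P_2^{g_E}=\Delta^2$. Two small remarks. First, your normalization $f_\infty=\tfrac14\log\rho_\infty$ is actually the consistent one (since $dV_g=e^{4f}dV_{g_0}$ in dimension $4$, the density in \eqref{ConfMin2} is $\rho=e^{4f}$), so the paper's displayed formula $\rho_\infty=e^{f_\infty}$ appears to carry a typo that you have implicitly corrected. Second, your argument for $t>0$ is a welcome explicit justification the paper leaves tacit, but the claim that $h|\Omega|<M<H|\Omega|$ is ``automatic'' is slightly loose: the definition of $\Ci$ does not forbid $M=e^{4A}|\Omega|$ (which would force $f\equiv A$ and $A=0$); rather, the non-degenerate range of $M$ is the implicitly assumed setting, as the paper's own assertion $t>0$ presupposes.
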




\begin{thebibliography}{99}

\bibitem{Aleksandrov1}
{\sc A. D. Aleksandrov},  
{\em Uniqueness theorems for surfaces in the large. I},
 Amer. Math. Soc. Transl.(2){\bf 21}, (1962),  341--354.
  
\bibitem{Aleksandrov2}
{\sc A. D. Alexandrov}
{\em A characteristic property of spheres},
Ann. Mat. Pura Appl.  {\bf 58}, (1962), 303--315.

\bibitem{AC16}
{\sc C. Anedda and F. Cuccu}, 
{\em Steiner symmetry in the minimization of the first eigenvalue in problems involving the $p$-Laplacian}, 
Proc. Amer. Math. Soc. {\bf 144}, (2016), 3431--3440.

\bibitem{ACP}
{\sc C. Anedda, F. Cuccu and G. Porru},
{\em Minimization of the first eigenvalue in problems involving the bi-laplacian},
Rev. Mate. Teor. Appl. {\bf 16}, (2009), 127--136.

\bibitem{Banks}
{\sc D. O. Banks}, 
{\em Bounds for the eigenvalues of nonhomogeneous hinged vibrating rods}, 
J. Math. Mech. {\bf 16}, (1967), 949--966. 

\bibitem{BGW}
{\sc E. Berchio, F. Gazzola and T. Weth}, 
{\em Radial symmetry of positive solutions to nonlinear polyharmonic Dirichlet problems}, 
J. Reine Angew. Math., (2008), {\bf 620}, 165--183.

\bibitem{Boggio}
{\sc T. Boggio},
{\em Sulle funzioni di Green d'ordine $m$}, 
Rend. Circ. Mat. Palermo {\bf 20}, (1905), 97--135. 

\bibitem{BrockSolynin} 
{\sc F. Brock and A.Y. Solynin}, 
{\em An approach to symmetrization via polarization}, 
Trans. Amer. Math. Soc. {\bf 352}, (2000), 1759--1796.

\bibitem{Burchard}
{\sc A. Burchard}, 
{\em A short course on rearrangement inequalities}
 Lecture notes, IMDEA Winter School, Madrid (2009).

\bibitem{Chanillo13}
{\sc S. Chanillo}, 
{\em Conformal geometry and the composite membrane problem}, 
Anal. Geom. Metr. Spaces {\bf 1}, (2013), 31--35.

\bibitem{CGIKO00}
{\sc S. Chanillo, D. Grieser, M. Imai, K. Kurata and I. Ohnishi}, 
{\em Symmetry breaking and other phenomena in the optimization of eigenvalues for composite membranes}, 
Comm. Math. Phys. {\bf 214}, (2000), 315--337.

\bibitem{CGK} 
{\sc S. Chanillo, D. Grieser and K. Kurata}, 
{\em The free boundary problem in the optimization of composite membranes}, 
Differential geometric methods in the control of partial differential equations 
(Boulder, CO, 1999), Amer. Math. Soc., Providence, RI, 2000, {\bf 268}, 61--81.

\bibitem{CK08} 
{\sc S. Chanillo and C.E. Kenig}, 
{\em Weak uniqueness and partial regularity for the composite membrane problem}, 
J. Eur. Math. Soc. (JEMS) {\bf 10}, (2008), 705--737.

\bibitem{CKT} 
{\sc S. Chanillo, C.E. Kenig and T. To}, 
{\em Regularity of the minimizers in the composite membrane problem in $\mathbb{R}^2$}, 
J. Funct. Anal. {\bf 255}, (2008), 2299--2320.

\bibitem{Chen}
{\sc W. Chen, C-S. Chou and C-Y. Kao},
{\em Minimizing Eigenvalues for Inhomogeneous Rods and Plates},
J. Sci. Comput. {\bf 69}, (2016), 983--1013. 

\bibitem{Cianchi}
{\sc A. Cianchi},
{\em Symmetrization and second-order Sobolev inequalities}, 
Ann. Mat. Pura Appl. {\bf 183}, (2004),  45--77. 

\bibitem{ColPr}
{\sc B. Colbois and L. Provenzano},
{\em Eigenvalues of elliptic operators with density}, preprint.\\
\url{https://arxiv.org/abs/1706.00243}.

\bibitem{CoxMc}
{\sc S.J. Cox and J.R. McLaughlin}, 
{\em Extremal eigenvalue problems for composite membranes. I, II}, 
Appl. Math. Optim. {\bf 22}, (1990), 153--167, 169--187.

\bibitem{CEP06}
{\sc F. Cuccu, B. Emamizadeh and G. Porru}, 
{\em Optimization problems for an elastic plate}, 
J. Math. Phys. {\bf 47}, (2006), 12 pp. 

\bibitem{CEP09}
{\sc F. Cuccu, B. Emamizadeh and G. Porru}, 
{\em Optimization of the first eigenvalue in problems involving the $p$-Laplacian}, 
Proc. Amer. Math. Soc. {\bf 137}, (2009), 1677--1687.

\bibitem{DP13}
{\sc L. Damascelli and F. Pacella}, 
{\em Symmetry results for cooperative elliptic systems via linearization}, 
SIAM J. Math. Anal. {\bf 45}, (2013), 1003--1026.

\bibitem{FGW} 
{\sc A. Ferrero, F. Gazzola and T. Weth}, 
{\em Positivity, symmetry and uniqueness for minimizers of second-order Sobolev inequalities}, 
Annali di matematica pura ed applicata {\bf 186}, (2007), 565--578.

\bibitem{Fried}
{\sc S. Friedland}, 
{\em Extremal eigenvalue problems defined for certain classes of functions}, 
Arch. Rational Mech. Anal. {\bf 67}, (1977), 73--81.

\bibitem{GGS} 
{\sc F. Gazzola, H.-C. Grunau and G. Sweers}, 
{\em Polyharmonic boundary value problems}, 
Springer-Verlag, Berlin, 2010, 1991, pages xviii+423.

\bibitem{GNN}
{\sc B. Gidas, B, W.M. Ni and L. Nirenberg}, 
{\em Symmetry and related properties via the maximum principle}, 
Comm. Math. Phys. {\bf 68}, (1979), 209--243.

\bibitem{GT} 
{\sc D. Gilbarg and N.S. Trudinger}, 
{\em Elliptic partial differential equations of second order}, 
Springer-Verlag, Berlin, 2001, pages xiv+517.

\bibitem{GrSw}
{\sc H.-C. Grunau and G. Sweers}, 
{\em Sign change for the Green function and for the first
eigenfunction of equations of clamped-plate type},
Arch. Ration. Mech. Anal. {\bf 150}, (1999), 179–-190.

\bibitem{Henrot}
{\sc A. Henrot}, 
{\em Extremum problems for eigenvalues of elliptic operators}, 
Birkh\"{a}user Verlag, Basel, (2006), x+202.

\bibitem{LL}
{\sc E.H. Lieb and M. Loss}, 
{\em Analysis}, 
Analysis American Mathematical Society, Providence, RI, 2001, 14, pages xxii+346.

\bibitem{Mityagin} 
{\sc B. Mityagin}, 
{\em The zero set of a real analytic function}, 
preprint, \url{https://arxiv.org/abs/1512.07276}.

\bibitem{Sweers}
{\sc S.A. Nazarov and G. Sweers},
{\em A hinged plate equation and iterated Dirichlet Laplace operator on domains with concave corners}, 
J. Differential Equations {\bf 233}, (2007), 151--180.

\bibitem{Piel}
{\sc W.A. Pielichowski}, 
{\em The optimization of eigenvalue problems involving the $p$-Laplacian}, 
Univ. Iagel. Acta Math., (2004), 109--122.

\bibitem{PolSz}
{\sc  G. P\'olya and G. Szeg\"o}, 
{\em Isoperimetric Inequalities in Mathematical Physics},
Annals of Mathematics Studies, no. 27, Princeton University Press, Princeton, N. J., (1951), xvi+279 pp. 

\bibitem{Protter} 
{\sc M.H. Protter}, 
{\em Unique continuation for elliptic equations}, 
Trans. Amer. Math. Soc. {\bf 95}, (1960), 81--91.

\bibitem{PS}
{\sc P. Pucci and J. Serrin},
{\em The maximum principle}, 
Birkh\"{a}user Verlag, Basel, (2007), {\bf 73}, x+235.

\bibitem{Serrin}
{\sc J. Serrin}, 
{\em A symmetry problem in potential theory}, 
Arch. Rational Mech. Anal. {\bf 43}, (1971), 304--318.

\bibitem{Sha}
{\sc H. Shahgholian}, 
{\em The singular set for the composite membrane problem}, 
Comm. Math. Phys. {\bf 271}, (2007), 93--101.

\bibitem{Swe}
{\sc G. Sweers}, 
{\em No Gidas-Ni-Nirenberg type result for semilinear biharmonic problems}, 
Math. Nachr. {\bf 246/247}, (2002), 202--206.

\bibitem{Treves} 
{\sc F. Tr\'eves}, 
{\em Basic linear partial differential equations}, 
Academic Press [A subsidiary of Harcourt Brace Jovanovich, Publishers], 
New York-London, 1975, pages xvii+470.

\bibitem{Troy81} 
{\sc W.C. Troy}, 
{\em Symmetry properties in systems of semilinear elliptic equations}, 
J. Differential Equations {\bf 42}, (1981), 400--413.

\bibitem{VdV}
{\sc R.C.A.M. Van der Vorst},  
{\em Best constant for the embedding of the space $H^2 \cap H^1_0$ into $L^{2N/(N-4)}$}, 
Differential Integral Equations {\bf 6}, (1993), 259--276.

\end{thebibliography}
\end{document}